\newcommand{\N}{\ensuremath{\mathbb{N}}}
\newcommand{\F}{\ensuremath{\mathbb{F}}}
\newcommand{\Fq}{\ensuremath{\mathbb{F} \! _q}}
\newcommand{\Fqbar}{\ensuremath{\overline{\F} \! _q}}
\newcommand{\Fqd}{\ensuremath{\mathbb{F} \! _{q^d}}}
\newcommand{\Fqi}{\ensuremath{\mathbb{F} \! _{q^i}}}
\newcommand{\Fqis}{\ensuremath{\mathbb{F} \! _{q^i}\hspace{-0.02cm} (s)}}
\newcommand{\Fqist}{\ensuremath{\mathbb{F} \! _{q^i}\hspace{-0.02cm} (s,t)}}
\newcommand{\Fqsep}{\ensuremath{\overline{\Fq(s)}^{\operatorname{sep}}}}
\newcommand{\Fqsept}{\ensuremath{\Fqsep  \! ((t))}}
\newcommand{\kbar}{\ensuremath{\overline{k}}}
\newcommand{\kt}{\ensuremath{\overline{k}(t)}}
\newcommand{\ksep}{\ensuremath{\overline{k}^{\, \operatorname{sep}}}}
\newcommand{\G}{\ensuremath{\mathcal{G}}}
\newcommand{\Hcal}{\ensuremath{\mathcal{H}}}
\newcommand{\C}{\ensuremath{\mathbb{C}}}
\newcommand{\Z}{\ensuremath{\mathbb{Z}}}
\newcommand{\Ocal}{\ensuremath{\mathcal{O}}}
\newcommand{\ofrak}{\ensuremath{\mathfrak{o}}}
\newcommand{\Oabs}{\ensuremath{\mathcal{O}_{|\cdot|}}}
\newcommand{\pfrak}{\ensuremath{\mathfrak{p}}}
\newcommand{\mfrak}{\ensuremath{\mathfrak{m}}}
\newcommand{\qfrak}{\ensuremath{\mathfrak{q}}}
\newcommand{\Pcal}{\ensuremath{\mathcal{P}}}
\newcommand{\Gal}{\ensuremath{\mathrm{Gal}}}
\newcommand{\GL}{\operatorname{GL}}
\newcommand{\Hom}{\operatorname{Hom}}
\newcommand{\Aut}{\operatorname{Aut}}
\newcommand{\Id}{\operatorname{id}}
\newcommand{\SL}{\operatorname{SL}}
\newcommand{\Sp}{\operatorname{Sp}}
\newcommand{\SO}{\operatorname{SO}}
\newcommand{\tr}{^\mathrm{tr}}
\newcommand{\diag}{\operatorname{diag}}
\newcommand{\Mn}{\operatorname{M}_n}
\newcommand{\Quot}{\operatorname{Quot}}
\newcommand{\sigmab}{\boldsymbol{\sigma}}
\newtheorem{thm}{Theorem}[section]
\newtheorem{lemma}[thm]{Lemma}
\newtheorem{Def}[thm]{Definition}
\newtheorem{ex}[thm]{Example}
\newtheorem{prop}[thm]{Proposition}
\newtheorem{cor}[thm]{Corollary}
\newtheorem{rem}[thm]{Remark}
\title{A Difference Version of Nori's Theorem}
\author{Annette Maier 
}
\date{\today}
\begin{document}

\maketitle

{
\abstract{%
\noindent
We consider (Frobenius) difference equations over $(\Fq(s,t), \phi_q)$ where $\phi_q$ fixes $t$ and acts on $\Fq(s)$ as the Frobenius endomorphism. We prove that every semisimple, simply-connected linear algebraic group $\G$ defined over $\Fq$ can be realized as a difference Galois group over $(\Fqist,\phi_{q^i})$ for some $i \in \N$. The proof uses upper and lower bounds on the Galois group scheme of a Frobenius difference equation that are developed in this paper. The result can be seen as a difference analogue of Nori's Theorem which states that $\G(\Fq)$ occurs as (finite) Galois group over $\Fq(s)$. 
}
}

\maketitle

\section{Introduction}
In analogy to the Galois theory of polynomials (or differential equations), \textit{difference Galois theory} studies extensions generated by solutions of difference equations. A (linear) \textit{difference equation} over a \textit{difference field} $(F,\phi)$ (i.e., $F$ is a field and $\phi$ an endomorphism of $F$) is an equation of the form \[\phi(y)=Ay \] with $A \in \GL_n(F)$, $y$ a vector consisting of $n$ indeterminates and $\phi$ applied coordinate-wise to $y$. A vector $y$ with entries in a difference ring extension $R$ of $F$ (i.e., $\phi$ extends to $R$) satisfying $\phi(y)=Ay$ is called a \textit{solution} of the difference equation. The term difference equation was originally only used over the difference field $\C(z)$ with $\phi: \C(z) \rightarrow \C(z)$ given by $\phi(z)=z+1$. A classic example is the one-dimensional difference equation $\phi(y)=zy$ over $\C(z)$ which is solved by the Gamma function. \\
\\
An equivalent concept to difference equations is that of \textit{difference modules}. An $n$-dimensional difference module $(M,\Phi)$ over a difference field $(F,\phi)$ is an $n$-dimensional $F$-vector space together with a difference structure $\Phi: M \rightarrow M$ that is given by a matrix $D \in \GL_n(F)$ (with respect to a fixed basis $\mathcal{B}$ of $M$): After identifying $M$ with $F^n$ via the basis $\mathcal{B}$, $\Phi(x)=D\phi(x)$ for $x \in M$; in other words, $\Phi$ acts \textit{semilinearly} on $M$ and $D$ collects the images of $\mathcal{B}$ in its columns. A \textit{solution} of $(M,\Phi)$ is an element $x \in M\otimes_F R$ for some difference ring extension $(R,\phi)$ of $(F,\phi)$ such that $\Phi(x)=x$, where $\Phi$ acts on $M\otimes_F R$ via $\Phi \otimes \phi$. Note that the solutions of $(M,\Phi)$ are in bijection to the solutions of the difference equation $\phi(y)=D^{-1}y$. There is the notion of a \textit{Picard-Vessiot ring} which is in some sense a ``smallest'' ring extension $R$ of $F$ together with an extension of $\phi$ such that there exists a full set of solutions in $M\otimes_F R$. In case the \textit{constants} $C$ of $F$ (the elements fixed by $\phi$) are algebraically closed, there always exists a Picard-Vessiot ring. The \textit{difference Galois group} can then be defined as the group of automorphisms of $R$ that leave $F$ (pointwise) invariant and commute with $\phi$; it turns out to be a linear algebraic group defined over $C$ (as developed in \cite{PutSinger}). This can be generalized to the case of an arbitrary field of constants $C$, leading to difference Galois groups that are affine group schemes defined over $C$ provided that there exists a Picard-Vessiot ring. The difference Galois group of an $n$-dimensional difference module can be embedded into $\GL_n$.\\
\\
Similar to the inverse problem in classical Galois theory, it is a natural question to ask which affine group schemes defined over $C$ occur as Galois groups of some difference module over the fixed base field $F$ with fixed endomorphism $\phi$. For example, if $F=\C(z)$ with $\phi$ given by $\phi(z)=z+1$ as above, it has been conjectured that a linear algebraic group $\G$ over $\C$ is a difference Galois group if and only if the quotient $\G / \G^{0}$ by the identity component is cyclic  (it is known that this is true for $\G=\G^{0}$ and that the condition on $\G / \G^{0}$ is necessary - see \cite{PutSinger}).\\
\\
Let now $F=\Fq(s,t)$ be a function field in two variables over the finite field $\Fq$ with $\phi=\phi_q$ acting trivially on $\Fq(t)$ and mapping $s$ to $s^q$. Then the constants of $F$ are $C=\Fq(t)$. Difference modules over $(F,\phi_q)$ are also called Frobenius modules. The \textbf{main result} of this paper is that every semisimple and simply-connected group $\G$ that is defined over $\Fq$ occurs as a $\phi_{q^i}$-difference Galois group over $\Fqi(s,t)$ for some $i \in \N$, (Theorem \ref{result}). The number $i$ has to be chosen in such a way that the following holds:
\begin{itemize}
\item $\G$ splits over $\Fqi$ and there exists a regular element $g_0 \in \G(\Fqi)$ contained in a maximal torus that splits over $\Fqi$
\item a certain place $\pfrak$ of $\Fq(s)$ (depending on $g_0$) splits into places of degree $1$ inside $\Fqis$. 
\end{itemize}
It should be mentioned that in case $F=\Fqbar(s)((t))$ with $\phi_q$ acting coefficient-wise as the Frobenius endomorphism on $\Fqbar(s)$ (hence the constants of $F$ equal $\Fq((t))$), the inverse problem has been solved by Matzat. Namely, Theorem 2.3 in \cite{Matzat2} implies that any linear algebraic group defined over $\Fq((t))$ occurs as a difference Galois group over $\Fqbar(s)((t))$. However, this result is based on taking $t$-adic limits, so the proof cannot be transferred to our non-complete base field $\Fq(s)(t)$ or even $\Fqbar(s)(t)$.\\
\\
Our approach instead uses upper and lower bound techniques as follows. First, we give a necessary condition for the existence of a Picard-Vessiot ring of a Frobenius module (see Theorem \ref{thmexistenceofsol} together with Theorem \ref{Pap4.2.5}). Let $(M,\Phi)$ be an $n$-dimensional Frobenius module with representing matrix $D \in \GL_n(\Fq(s,t))$ satisfying this condition and let $\Hcal\leq \GL_n$ denote its Galois group. We show that given a connected linear algebraic group $\G\leq \GL_n$ defined over $\Fq$, we have $\Hcal \leq \G$ if $D$ is contained in $\G(\Fq(s,t))$ (Theorem \ref{ubthm} together with Proposition \ref{closedemb}). This gives an upper bound on the difference Galois group $\Hcal$. We also develop a lower bound criterion that yields explicit elements (depending on $D$) that are contained in $\Hcal$ up to conjugacy (see Theorem \ref{uschr}). These criteria can be used to construct Frobenius modules with given Galois group $\G$: Find a matrix $D \in \G(\Fq(s,t))$ (the representing matrix of the Frobenius module we are looking for) that meets the assumptions for the existence of a Picard-Vessiot ring such that any conjugates of the elements provided by our lower bound criterion generate $\G$ (or a dense subgroup of $\G$); the latter condition requires a certain knowledge of how to generate $\G$. In this way, we derived explicit Frobenius modules with difference Galois groups $\SL_n$, $\Sp_{2d}$, $\SO_n$ and the Dickson groups $G_2$ in (\cite{Diss}). \\
\\
Let now $\G$ be an arbitrary semisimple and simply-connected group defined over $\Fq$. Nori's theorem (\cite{Nori}) asserts that $\G(\Fq)$ can be realized as finite Galois group over $\Fq(s)$. Using our three criteria explained above, we can ``lift'' this finite extension to a Frobenius module over $\Fq(s,t)$ with Galois group $\Hcal \leq \G$ defined over $\Fq(t)$ such that every element in $\G(\Fq)$ occurs as constant coefficient matrix of some element inside $\Hcal(\Fq[[t]])$, and such that $\Hcal$ contains a certain conjugate of a maximal torus $T$ of $\G$ that is defined over $\Fq$. By passing from $\Fq$ to $\Fqi$, we may assume that $T$ splits over $\Fq$. Then we can use the structure theory of split reductive linear algebraic groups to show that any closed subgroup $\Hcal \leq \G$ as above equals $\G$ (Theorem \ref{arbgen}).\\
\\ 
We can also lift our difference modules from $\Fq(s,t)$ to $\overline{\Fq(s)}(t)$ to get difference modules with the same Galois group. As a result we obtain \textit{rigid analytically trivial pre-$t$-motives} with arbitrary semisimple, simply-connected Galois groups. The category of rigid analytically trivial pre-$t$-motives contains the category of $t$-motives, which is of importance in the number theory of function fields (see for example \cite{Papanikolas}).\\
\\
The paper is organized as follows. Section \ref{sec2} provides some background on the Galois theory of difference modules (with not necessarily algebraically closed fields of constants) collecting all statements used later. In Section \ref{sec3}, we set up some notation that will be used throughout all following sections. In Section \ref{sec4}, we develop techniques to guarantee that a Frobenius module has a certain difference Galois group. Specifically, Section \ref{EX} is concerned with the existence of Picard-Vessiot rings, while Sections \ref{UB} and \ref{LB} provide upper and lower bounds for difference Galois groups. Section \ref{GE} deals with finding generators of reductive groups that respect a certain conjugacy. Using the results from Sections \ref{sec4} and \ref{GE}, we can prove the main result in Section \ref{GEN}. In the last section, we translate the result to the language of pre-$t$-motives. \vspace{-0.3cm}
\subsection*{Acknowledgements} The author would like to thank J. Hartmann for many valuable suggestions and discussions, as well as D. Harbater, B.\,H. Matzat and M. Wibmer for helpful comments.

\section{Basics of Difference Galois Theory}\label{sec2}
In this section we give a short introduction to the Galois theory of difference modules (in other words, the Galois theory of (linear) difference equations). The standard reference is \cite{PutSinger}; unfortunately, the authors restrict themselves to algebraically closed fields of constants and surjective difference homomorphisms (``inversive'' difference fields). Arbitrary fields of constants (but still in the inversive case) are treated in \cite{AmanoMas}. A more general approach is taken in \cite{Wibmer} which allows certain non-linear difference equations. Most of the statements quoted in this section can be proven similarly to the classical case (inversive and algebraically closed fields of constants). They also follow from the more general theory in \cite{Wibmer}. Direct proofs can be found in \cite{Diss}.
\begin{Def}
A \emph{difference ring} $(R,\phi)$ is a commutative ring $R$ equipped with a ring homomorphism $\phi \colon R \rightarrow R$. A \emph{difference field} is a difference ring which is a field. The \emph{constants} $C_R$ of a difference ring $R$ are the elements of $R$ fixed by $\phi$. A \emph{difference ideal} of a difference ring $R$ is a $\phi$-stable ideal of $R$ and $R$ is called a \emph{simple difference ring} if its only difference ideals are $(0)$ and $R$. In this case, $C_R$ is a field. If $R$ and $S$ are difference rings, a ring homomorphism $\sigma \colon R \rightarrow S$ commuting with the difference structure on $R$ and $S$ is called a \emph{difference homomorphism}. The set of all such is denoted by $\Hom^{\phi}(R,S)$.
\end{Def}

\begin{ex} 
Let $q$ be a prime power and consider $\Fq(s,t)$. Let $\phi_q$ be the homomorphism on $\Fq(s,t)$ fixing $t$ and restricting to the ordinary Frobenius endomorphism on $\Fq(s)$, i.e., $\phi_q(s)=s^q$. Then $\Fq(s,t)$ is a difference field extension of $\Fq(s)$, with constants $\Fq(t)$. Note that $\phi_q$ is not an automorphism.
\end{ex}

\begin{Def}\label{defdiffmod}
Let $(F,\phi)$ be a difference field. A \emph{difference module} (or $\phi$-module, for short) over $F$ is a finite dimensional $F$-vector space $M$ together with a $\phi$-semilinear map $\Phi \colon M \rightarrow M$, (i.e., $\Phi$ is additive and for any $\lambda \in F$ and $x \in M$ we have $\Phi(\lambda x)=\phi(\lambda)\Phi(x)$) such that there exists a representing matrix $D$ contained in $\GL_n(F)$, where $n=\dim_F(M)$. A \emph{representing matrix} $D$ is defined as follows: With respect to a fixed basis of $M$, the action of $\Phi$ is completely described by the images of the basis elements. The representing matrix $D$ (with respect to this basis) collects these images in its columns. Conversely, every $D \in \GL_n(F)$ gives rise to an $n$-dimensional difference module.\\
 A \emph{fundamental (solution) matrix} for $M$ in some difference ring extension $R \geq F$ is defined to be an element $Y \in \GL_n(R)$ such that $D\phi(Y)=Y$ holds, where $\phi$ is applied coordinate-wise. There exists a $\Phi$-invariant basis of $M\otimes_F R$ if and only if there exists a fundamental solution matrix $Y \in \GL_n(R)$. Indeed, the elements in $M$ represented by the columns of a fundamental matrix are $\Phi$-invariant. 
\end{Def}

We now present the notion of Picard-Vessiot rings of difference equations (which do not necessarily exist if the field of constants is not algebraically closed).

\begin{Def}
Let $(F, \phi)$ be a difference field with constants $C$ and let $(M,\Phi)$ be a difference module over $(F,\phi)$ with representing matrix $D\in\GL_n(F)$. An extension of difference rings $R/F$ is called a Picard-Vessiot ring for $M$ if the following holds:
\begin{itemize}
\item $R$ is a simple difference ring.
\item The field of constants of $R$ is $C$.
\item There exists a fundamental matrix $Y \in \GL_n(R)$, i.e., $D\phi(Y)=Y$.
\item $R$ is generated as $F$-algebra by $\{Y_{ij}, \det(Y)^{-1} \ | \ 1 \leq i,j \leq n \}$.
\end{itemize}
We will use the notation $F[Y,Y^{-1}]:=F[Y_{ij}, \det(Y)^{-1} \ | \ 1 \leq i,j \leq n]$. 
\end{Def}

 The next theorem guarantees the existence of Picard-Vessiot rings provided there exists a fundamental matrix contained in a difference field extension with no new constants. 

\begin{thm} \label{Pap4.2.5}
Let $(F,\phi)$ be a difference field with field of constants $C$ and let $M$ be a difference module over $F$. Assume that $L/F$ is a difference field extension such that
\begin{enumerate}
 \item The field of constants of $L$ is $C$, 
\item There exists a fundamental matrix $Y \in \GL_n(L)$, i.e., $D\phi(Y)=Y$,
\end{enumerate}
Then $R:=F[Y,Y^{-1}] \subseteq L$ is a Picard-Vessiot ring for $M$ and $R$ is the only Picard-Vessiot ring for $M$ that is contained in $L$.
\end{thm}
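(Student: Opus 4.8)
The plan is to verify the four defining conditions of a Picard--Vessiot ring, three of which are automatic here, and then to establish uniqueness and, as the real work, $\phi$-simplicity. The three automatic points: by hypothesis $Y$ is a fundamental matrix and it lies in $R=F[Y,Y^{-1}]$; $R$ is generated over $F$ by the $Y_{ij}$ and $\det(Y)^{-1}$ by its very definition; and since $C\subseteq F\subseteq R\subseteq L$ while $L$ has constant field $C$, the constant field of $R$ is squeezed to be exactly $C$. So everything comes down to (i) $R$ is a simple difference ring, and (ii) any Picard--Vessiot ring for $M$ contained in $L$ equals $R$.

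I would dispose of (ii) first, as it is short and uses the mechanism needed again for (i). If $R_1\subseteq L$ is another Picard--Vessiot ring with fundamental matrix $Y_1\in\GL_n(R_1)$, then $W:=Y^{-1}Y_1\in\GL_n(L)$ satisfies $\phi(W)=(D^{-1}Y)^{-1}(D^{-1}Y_1)=W$, hence $W\in\GL_n(C)\subseteq\GL_n(F)$; writing $Y_1=YW$ with $W,W^{-1}\in\GL_n(F)$ gives $R_1=F[Y_1,Y_1^{-1}]=F[Y,Y^{-1}]=R$.

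For (i): $R$ is a finitely generated $F$-algebra, hence Noetherian, so it has a difference ideal $\mathfrak{m}$ maximal among the proper ones, and I must show $\mathfrak{m}=0$. Equivalently, with $R'=F[X,\det(X)^{-1}]$ carrying $\phi(X)=D^{-1}X$ and $\mathfrak{p}=\ker(R'\to R,\ X\mapsto Y)$, the goal is that the prime difference ideal $\mathfrak{p}$ is itself maximal among difference ideals. Assuming $\mathfrak{p}\subseteq\mathfrak{m}$, put $\overline{R}=R'/\mathfrak{m}$ (simple, hence reduced) with $\overline{Y}$ the image of $X$, and pass to $S:=R\otimes_F\overline{R}$ with difference structure $\phi\otimes\phi$. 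Both $Y\otimes 1$ and $1\otimes\overline{Y}$ are fundamental matrices over $S$, so $Z:=(Y\otimes 1)^{-1}(1\otimes\overline{Y})$ is $\phi$-fixed; since $1\otimes\overline{Y}=(Y\otimes 1)Z$ one gets $S=(R\otimes 1)[Z,\det(Z)^{-1}]=(1\otimes\overline{R})[Z,\det(Z)^{-1}]$, i.e.\ $S$ arises from either tensor factor by adjoining a matrix of constants. Passing to a maximal difference ideal $\mathfrak{M}$ of $S$: the simple ring $\overline{R}$ injects into $\overline{S}:=S/\mathfrak{M}$, the image of $Z$ has entries in the constant field $C_{\overline{S}}$, and $\overline{S}=\overline{R}[\,\overline{Z},\det(\overline{Z})^{-1}\,]=\overline{R}\otimes_{C_{\overline{R}}}C_{\overline{S}}$. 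Here I would invoke the two facts that replace, in our non-algebraically-closed setting, the classical van der Put--Singer arguments: that the constants of a finitely generated simple difference $(F,\phi)$-algebra are algebraic over $C$, and that every difference ideal of $B\otimes_{C_B}N$ (for $B$ simple with constant field $C_B$ and $N$ a $C_B$-algebra carrying the trivial difference structure) is extended from $N$. Tracking the induced difference homomorphism $R\to\overline{S}$ — it is the reduction $R\to\overline{R}\hookrightarrow\overline{S}$ precomposed with the constant right-translation encoded by $Z$ — and comparing $\dim R$, $\dim\overline{R}$ and $\dim\overline{S}=\dim\overline{R}$ (the last equality since enlarging the constant field by an algebraic extension does not change dimension, while $R\twoheadrightarrow\overline{R}$ already gives $\dim R\ge\dim\overline{R}$) should force $\dim R=\dim\overline{R}$; as $R=R'/\mathfrak{p}$ is a domain and $R'$ is catenary, this collapses $\mathfrak{p}\subseteq\mathfrak{m}$ to $\mathfrak{p}=\mathfrak{m}$, so $\mathfrak{m}=0$.

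The hard part will be making that dimension comparison airtight, since the twisted homomorphism $R\to\overline{S}$ need not be injective a priori (injectivity is essentially what simplicity would already supply), so the dimension equality has to be extracted from the description of $\overline{S}$ as an algebraic constant base change of $\overline{R}$ rather than from a naive embedding; one must also handle the possibility that $C_{\overline{S}}$ does not lie in $F$, which forces a harmless base change of $R'$ along $C\hookrightarrow C_{\overline{S}}$ before the right-translation is even defined. Everything else — the three automatic conditions, the uniqueness statement, and the reductions above — is routine, so essentially all of the difficulty of the theorem is concentrated in the simplicity claim, and within it in the bookkeeping on constants of simple difference algebras over a general field of constants.
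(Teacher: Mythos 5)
Your setup (the easy conditions, the uniqueness argument via $W=Y^{-1}Y_1\in\GL_n(C)$, and the torsor-style comparison of two fundamental matrices via the constant matrix $Z$) is all sound, but the simplicity argument has a genuine gap, which you yourself flag and then do not close: to conclude $\dim R = \dim\overline{R}$ you need the inequality $\dim R\le\dim\overline{S}$, and the only way to get that is an injection of $R$ into $\overline{S}$. Nothing in your construction produces one. You tensored $R\otimes_F\overline{R}$ and passed to $\overline{S}=(R\otimes_F\overline{R})/\mathfrak{M}$; then $\overline{R}\hookrightarrow\overline{S}$ because $\overline{R}$ is simple, but the map $R\to\overline{S}$ can have a kernel precisely because $R$ is not yet known to be simple, and that kernel is what you are trying to prove is zero. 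Your suggestion that the equality can instead be "extracted from the description of $\overline{S}$ as an algebraic constant base change of $\overline{R}$" does not work: that description relates $\overline{S}$ to $\overline{R}$ only, and gives no lower bound on $\dim R$ in terms of them. Likewise, after the base change along $C\hookrightarrow C_{\overline{S}}$ the two maps $R'\otimes_C C_{\overline{S}}\to\overline{S}$ are indeed exchanged by the translation $\tau\colon X\mapsto X\overline{Z}$, but you cannot identify their kernels with $\mathfrak{p}\otimes_C C_{\overline{S}}$ and $\mathfrak{m}\otimes_C C_{\overline{S}}$ without the injections $R\otimes_C C_{\overline{S}}\hookrightarrow\overline{S}$ and $\overline{R}\otimes_C C_{\overline{S}}\hookrightarrow\overline{S}$, the first of which is exactly the missing point (and the second can also fail if $C_{\overline{R}}\supsetneq C$).

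The fix is to tensor with $L$ (equivalently with $\operatorname{Quot}(R)\subseteq L$) instead of with $R$: form $\overline{S}:=(L\otimes_F\overline{R})/\mathfrak{M}$. Since $L$ is a field it is a simple difference ring, so $L\hookrightarrow\overline{S}$, hence $R\hookrightarrow\overline{S}$ for free, and $R\otimes_C C_{\overline{S}}\hookrightarrow L\otimes_C C_{\overline{S}}\cong\overline{S}$ is injective by flatness. Now the kernel of $R'\otimes_C C_{\overline{S}}\to\overline{S}$ via $Y$ is exactly $\mathfrak{p}\otimes_C C_{\overline{S}}$, the kernel via $\overline{Y}$ is $\tau^{-1}(\mathfrak{p}\otimes_C C_{\overline{S}})$ and contains $\mathfrak{m}\otimes_C C_{\overline{S}}$, and from $\mathfrak{p}\subseteq\mathfrak{m}$ one gets $\mathfrak{p}\otimes C_{\overline{S}}\subseteq\mathfrak{m}\otimes C_{\overline{S}}\subseteq\tau^{-1}(\mathfrak{p}\otimes C_{\overline{S}})$; a Noetherian ascending-chain argument on $\tau^{-n}(\mathfrak{p}\otimes C_{\overline{S}})$ (or your dimension count, now valid) forces all three to coincide, whence $\mathfrak{p}=\mathfrak{m}$ by faithful flatness. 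This is where the hypothesis that $R$ sits inside a difference field $L$ with $C_L=C$ really earns its keep: you used it only to read off the constants of $R$, but it is equally indispensable for the simplicity of $R$, via the injectivity $L\hookrightarrow\overline{S}$.
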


\begin{Def}
In the situation as in Theorem \ref{Pap4.2.5}, i.e., if $R$ is an integral domain, we call $\Quot(R)$ a \emph{Picard-Vessiot extension} for $M$. 
\end{Def}

\begin{rem}
Other than in differential theory, the existence of a Picard-Vessiot ring does not imply the existence of a Picard-Vessiot extension, since a difference Picard-Vessiot ring is not necessarily a domain (we only know that it is reduced). It is therefore more natural to work with the total quotient rings of Picard-Vessiot rings instead of the field of fractions. However, the (explicitly constructed) Picard-Vessiot rings of the difference modules considered in this paper will be domains, so Theorem \ref{Pap4.2.5} will be sufficient for our purpose. 
\end{rem}

We now give a construction of the Galois group scheme $\G$ of a Picard-Vessiot ring $R$, which turns out to be a linear algebraic group under certain separability assumptions. We will not assume our Picard-Vessiot ring to be integral.

\begin{Def} Let $(F,\phi)$ be a difference field with field of constants $C$ and let $R$ be a Picard-Vessiot ring for some difference module over $F$. We write $\underline{Aut}(R/F)$ for the functor from the category of $C$-algebras to the category of groups sending a $C$-algebra $S$ to the group $\Aut^{\phi}(R\otimes_C S/ F\otimes_C S)$ of difference automorphisms fixing $F \otimes_C S$. Note that we consider $R \otimes_C S$ as difference ring via $\phi \otimes \Id$. 
\end{Def}

The key ingredient to show that $\underline{Aut}(R/F)$ is representable is the following proposition.
\begin{prop}\label{torsorprep}
Let $(F,\phi)$ be a difference field with constants $C$ and let $R/F$ be a Picard-Vessiot ring for a difference module over $F$. Then we have an $R$-linear isomorphism of difference rings
$$R\otimes_F R \cong R \otimes_C C_{R\otimes_F R}, $$ where $R\otimes_F R$ and $R \otimes_C C_{R\otimes_F R}$ are considered as difference rings via $\phi\otimes_F \phi$ and $\phi\otimes_C \Id$, resp. 
\end{prop}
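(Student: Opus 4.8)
The plan is to realize the isomorphism $R\otimes_F R\cong R\otimes_C C_{R\otimes_F R}$ by exhibiting the left-hand ring as a Picard-Vessiot-type object over $R$ for the base-changed difference module, and then using the fact that over a difference ring with a fundamental matrix the solution space behaves well. Concretely, I would first base-change the difference module $(M,\Phi)$ along $F\hookrightarrow R$ (via the left factor) to get a difference module over $R$; since $R$ already contains a fundamental matrix $Y\in\GL_n(R)$ for $M$, the module $M\otimes_F R$ is trivial over $R$, i.e. it becomes isomorphic to the standard module with $\Phi=\phi$ after the change of basis given by $Y$. The ring $R\otimes_F R$ is generated over its left copy of $R$ by the entries of the "second" fundamental matrix $1\otimes Y$ and the inverse of its determinant.

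The second step is to compare the two fundamental matrices: set $Z:=Y^{-1}\otimes 1\cdot 1\otimes Y$, i.e. the matrix over $R\otimes_F R$ whose $(i,j)$ entry is $\sum_k (Y^{-1})_{ik}\otimes Y_{kj}$. Because both $Y\otimes 1$ and $1\otimes Y$ satisfy $D\phi(\cdot)=\cdot$ over $R\otimes_F R$ (the matrix $D$ has entries in $F$, so it is unaffected by which copy of $R$ one uses, and $\phi$ acts as $\phi\otimes\phi$), a direct computation gives $\phi(Z)=Z$, so all entries of $Z$ and of $\det(Z)^{-1}$ lie in the constant ring $C_{R\otimes_F R}$. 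This yields a ring homomorphism $R\otimes_C C_{R\otimes_F R}\to R\otimes_F R$ sending $r\otimes c\mapsto (r\otimes 1)\cdot c$; it is $R$-linear (for the left $R$-structure) and a difference homomorphism by construction.

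The third step is to check this map is an isomorphism. Surjectivity is immediate: $R\otimes_F R$ is generated over $R\otimes 1$ by the entries of $1\otimes Y=(Y\otimes 1)\cdot Z$ and by $\det(1\otimes Y)^{-1}=\det(Y\otimes 1)^{-1}\det(Z)^{-1}$, and all of these are in the image since $Y\otimes 1$ and its determinant's inverse lie in $R\otimes 1$ while $Z$, $\det(Z)^{-1}$ lie in the image of the constants. For injectivity I would argue that $R\otimes_C C_{R\otimes_F R}$ is a free (hence faithfully flat) $C_{R\otimes_F R}$-module, reduce to showing $C_{R\otimes_F R}\to R\otimes_F R$ is injective — which holds because $C_{R\otimes_F R}$ is a subring — and then use that a surjection of $R\otimes 1$-algebras that becomes injective after a faithfully flat base change... more carefully, I would instead invoke the standard trick: the composite $R\otimes_C C_{R\otimes_F R}\to R\otimes_F R$ is a map of $C_{R\otimes_F R}$-algebras which is surjective, and one shows the source and target have "the same size" by producing the inverse explicitly, namely $r_1\otimes r_2\mapsto (r_1\cdot(Y\otimes 1)\text{-coordinates of }r_2)\otimes(\text{entries of }Z)$, using that every element of $R$ is a polynomial in the entries of $Y$ and $\det(Y)^{-1}$.

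The main obstacle I anticipate is the injectivity/well-definedness of the inverse map: one must show that the entries of $Z$ together with $R\otimes 1$ generate $R\otimes_F R$ \emph{freely} in the appropriate sense, i.e. that there are no unexpected relations, and this is where one genuinely uses that $R$ is a \emph{simple} difference ring with constant field exactly $C$ (not just some difference ring containing a fundamental matrix). The cleanest route is probably: show the natural map $R\otimes_C C_{R\otimes_F R}\to R\otimes_F R$ is surjective as above, then observe both sides are simple difference rings over $(R,\phi)$ with the induced structure — or rather, that the kernel is a difference ideal of $R\otimes_C C_{R\otimes_F R}$ contracting to $0$ in $C_{R\otimes_F R}$ — and conclude the kernel is zero by a constants-tracking argument analogous to the proof that Picard-Vessiot rings are unique. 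This mirrors the classical torsor argument in \cite{PutSinger} but must be carried out without assuming $R$ is a domain or $C$ algebraically closed, which is exactly the extra care the surrounding text is flagging.
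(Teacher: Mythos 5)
Your overall strategy is the standard torsor argument and the bulk of it is correct: the matrix $Z=(Y^{-1}\otimes 1)(1\otimes Y)$ does have constant entries (since $D\otimes 1=1\otimes D$, the computation $\phi(Z)=(Y^{-1}D\otimes 1)(1\otimes D^{-1}Y)=Z$ goes through), the map $R\otimes_C C_{R\otimes_F R}\to R\otimes_F R$, $r\otimes c\mapsto (r\otimes 1)c$, is a well-defined $R$-linear difference homomorphism, and your surjectivity argument via $1\otimes Y=(Y\otimes 1)Z$ is complete and correct.

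Where the proposal genuinely falls short is injectivity, and you flag this yourself. The three sketches you give each have a problem. The faithful-flatness idea is a non-starter as you write it: freeness of $R\otimes_C C_{R\otimes_F R}$ over $C_{R\otimes_F R}$ does not by itself make the map injective. The explicit inverse $r_1\otimes r_2\mapsto\ldots$ is not well-defined: an element $r_2\in R$ has no canonical expression as a polynomial in the entries of $Y$ and $\det(Y)^{-1}$ (the presentation has relations), so ``the $(Y\otimes 1)$-coordinates of $r_2$'' is not a recipe. And the claim that ``both sides are simple difference rings over $(R,\phi)$'' is false: $R\otimes_F R$ is generally not simple as a difference ring (its constants $C_{R\otimes_F R}$ form the coordinate ring of the Galois group scheme, which is rarely a field). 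Your ``or rather'' pivot toward looking at the kernel as a difference ideal contracting to zero in the constants is the right direction, but ``a constants-tracking argument analogous to the uniqueness of Picard-Vessiot rings'' is precisely the step that needs to be made precise, not cited.

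The missing piece is a specific lemma that deserves to be stated and proved: \emph{if $R$ is a simple difference ring over $F$ with constants $C$ and $T$ is any $C$-algebra, then every difference ideal $I$ of $R\otimes_C T$ (with respect to $\phi\otimes\Id$) is extended, i.e. $I=R\otimes_C(I\cap T)$.} The proof is the usual minimal-length argument: take $f=\sum_i r_i\otimes t_i\in I$ of minimal length with the $t_i$ linearly independent over $C$; the set of possible $r_1$'s occurring in such minimal expressions, together with $0$, is a nonzero difference ideal of $R$, hence all of $R$ by simplicity, so one may take $r_1=1$; then $\phi(f)-f\in I$ has shorter length and so vanishes, forcing each $r_i$ to be a constant, i.e. $r_i\in C$, whence $f\in 1\otimes T$ and $f\in I\cap T$. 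Applying this with $T=C_{R\otimes_F R}$, the kernel of your map is $R\otimes_C\mathfrak a$ for some ideal $\mathfrak a\subseteq C_{R\otimes_F R}$; but the restriction of the map to $1\otimes C_{R\otimes_F R}$ is the inclusion $C_{R\otimes_F R}\hookrightarrow R\otimes_F R$, so $\mathfrak a=0$ and the map is injective. Without isolating and proving this lemma, the injectivity claim is not established.
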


\begin{thm}\label{representability}
The group functor $\underline{Aut}(R/F)$ is represented by the $C$-algebra $C_{R \otimes_F R}$, and is thus an affine group scheme over $C$. If moreover $R$ is separable over $F$, then $\G=\underline{Aut}(R/F)$ is a linear algebraic group over $C$, that is, an affine group scheme of finite type over $C$, such that $\G \times_C \overline{C}$ is reduced (i.e., $\G$ is ``geometrically reduced"). 
\end{thm}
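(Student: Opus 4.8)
The plan is to show representability by constructing an explicit isomorphism between $\underline{Aut}(R/F)$ and the functor of points of $\Spec(C_{R\otimes_F R})$, and then to extract the finiteness and geometric reducedness from the separability hypothesis. First I would unwind the definitions: a difference automorphism $\sigma \in \Aut^\phi(R\otimes_C S/F\otimes_C S)$ is determined by where it sends the entries of a fundamental matrix $Y$, and since $D\phi(Y)=Y$ and $\sigma$ commutes with $\phi$ and fixes $F\otimes_C S$, the matrix $\sigma(Y)$ is again a fundamental matrix over $R\otimes_C S$. Hence $Y^{-1}\sigma(Y)$ has entries that are constants of $R\otimes_C S$; because $R$ is a Picard-Vessiot ring with constants $C$, and using that $R\otimes_C S$ has constants $S$ (this needs a short argument, or is part of the background), we get $Y^{-1}\sigma(Y) \in \GL_n(S)$. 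This assignment $\sigma \mapsto Y^{-1}\sigma(Y)$ embeds $\underline{Aut}(R/F)(S)$ into $\GL_n(S)$ as a subgroup functor, and conversely any such matrix defines an automorphism; so $\underline{Aut}(R/F)$ is a subfunctor of $\GL_n$.

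Next I would use Proposition~\ref{torsorprep} to identify the representing algebra. The isomorphism $R\otimes_F R \cong R\otimes_C C_{R\otimes_F R}$ of difference rings says precisely that $\Spec(R\otimes_F R)$, viewed as an $R$-scheme, is the base change to $R$ of $\Spec(C_{R\otimes_F R})$ over $C$ — in other words $R/F$ is a torsor under the $C$-group scheme $\Spec(C_{R\otimes_F R})$. Concretely, consider the two maps $R \rightrightarrows R\otimes_F R$; their images of $Y$ differ by a matrix over $C_{R\otimes_F R}$, and one checks that the induced $C$-algebra map $C_{R\otimes_F R} \to S$ corresponds bijectively and functorially to the datum of $\sigma \in \underline{Aut}(R/F)(S)$. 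This gives a natural isomorphism $\underline{Aut}(R/F) \cong \Hom_{C\text{-alg}}(C_{R\otimes_F R}, -)$, so $\underline{Aut}(R/F)$ is represented by $C_{R\otimes_F R}$ and is an affine group scheme over $C$. That $C_{R\otimes_F R}$ carries a Hopf algebra structure making this a group scheme follows from composing/inverting automorphisms, or equivalently from the torsor structure.

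For the second assertion, assume $R/F$ is separable. Since $R$ is finitely generated over $F$, $R\otimes_F R$ is finitely generated over $R$; I would then argue that the subring of constants $C_{R\otimes_F R}$ is finitely generated over $C$ — this is where separability enters, via the isomorphism $R\otimes_F R \cong R\otimes_C C_{R\otimes_F R}$: the left side is a finite-type $R$-algebra, so $C_{R\otimes_F R}$ is a finite-type $C$-algebra by faithfully flat descent along $C \to R$ (note $R$ is faithfully flat, even free, over $C$ since it is a $C$-subalgebra of a field... more carefully, one descends finite generation along the faithfully flat map $C \to R$). Hence $\G$ is of finite type over $C$. For geometric reducedness, I would base change to $\overline{C}$: separability of $R/F$ implies (again through the torsor isomorphism and a standard argument that the torsor being "separable" forces the structure group to be smooth, cf.\ the classical Picard-Vessiot case) that $\G\times_C\overline{C}$ is reduced.

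The main obstacle I anticipate is the descent step establishing that $C_{R\otimes_F R}$ is of finite type over $C$ and that separability of $R/F$ transfers to geometric reducedness of $\G$ — this is the place where one must genuinely use the separability hypothesis rather than just formal torsor yoga, and it mirrors the subtle point in the classical theory where one shows the Galois group scheme is smooth. Everything else (the $\GL_n$-embedding, the functorial bijection from Proposition~\ref{torsorprep}) is essentially bookkeeping that parallels the inversive, algebraically-closed-constants case treated in \cite{PutSinger}, with the care that $R$ need not be a domain.
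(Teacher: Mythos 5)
The paper itself does not exhibit a proof of this theorem: it explicitly defers to \cite{Diss} and \cite{Wibmer} and remarks that the argument parallels the classical (inversive, algebraically closed constants) case. Your sketch does follow that classical torsor-based route, so the overall approach is the right one. Two points deserve correction, though.

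First, you locate the use of separability incorrectly. You write that proving $C_{R\otimes_F R}$ is of finite type over $C$ ``is where separability enters,'' but the finite-type statement needs no separability at all. Proposition~\ref{torsorprep} is proved without any separability hypothesis; $R\otimes_F R$ is finite type over $R$ because $R = F[Y,Y^{-1}]$ is finite type over $F$; and $C\to R$ is faithfully flat simply because $R$ is a nonzero $C$-vector space, hence free over the field $C$ (your aside about $R$ being a subalgebra of a field is both unnecessary and unavailable, since $R$ need not be a domain). Faithfully flat descent of finite type along $C\to R$ then gives the result unconditionally. Thus $\underline{Aut}(R/F)$ is always an affine group scheme of finite type over $C$; separability is used \emph{only} for geometric reducedness.

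Second, the geometric reducedness argument deserves to be made explicit rather than waved at via ``a standard argument that a separable torsor has smooth structure group.'' The concrete step is: choose an $F$-algebra homomorphism $R \to \overline F$ (possible since $R$ is finite type over $F$), and base change the isomorphism of Proposition~\ref{torsorprep} along it to obtain $R\otimes_F \overline F \cong C_{R\otimes_F R}\otimes_C \overline F$ (this is precisely Corollary~\ref{torsorcor}). Separability of $R/F$ means the left side is reduced, so the right side is reduced, and then $C_{R\otimes_F R}\otimes_C \overline C$ is reduced as a subring of $C_{R\otimes_F R}\otimes_C \overline F$ (flatness of $\overline C \to \overline F$ over the field $\overline C$ keeps the inclusion injective). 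A minor further caveat: your remark ``conversely any such matrix defines an automorphism'' is misleading as stated --- not every element of $\GL_n(S)$ arises; it is exactly the representability via $C_{R\otimes_F R}$ that cuts out which matrices do, and this is Proposition~\ref{closedemb} rather than a freebie.
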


\begin{Def} \label{defgalois} 
Let $(F,\phi)$ be a difference field with field of constants $C$ and let $(M,\Phi)$ be a difference module over $(F,\phi)$ with a Picard-Vessiot ring $R$. Then we call $\underline{Aut}(R/F)$ the Galois group scheme of $M$ (with respect to $R$, which is not unique, in general). Two different Picard-Vessiot rings for the same difference module yield Galois groups that are isomorphic over an algebraic closure of $C$.
\end{Def}

As a corollary to Proposition \ref{torsorprep}, we get the well-known identity between transcendence degree of Picard-Vessiot extensions and dimension of their Galois group scheme:
\begin{cor}\label{torsorcor}
Let $(F,\phi)$ be a difference field with field of constants $C$ and let $R$ be a Picard-Vessiot ring for a difference module over $(F,\phi)$ with Galois group scheme $\G$. Then $R\otimes_F \overline{F} \cong C[\G] \otimes_C \overline{F}$, where $\overline{F}$ denotes an algebraic closure of $F$. In particular, $\operatorname{dim}(R)=\dim(\G)$, where $\operatorname{dim}(R)$ denotes the Krull dimension of $R$. 
\end{cor}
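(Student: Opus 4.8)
The plan is to deduce Corollary \ref{torsorcor} directly from Proposition \ref{torsorprep}. First I would observe that Proposition \ref{torsorprep} gives an $R$-linear difference isomorphism $R \otimes_F R \cong R \otimes_C C_{R \otimes_F R}$. Since $\G = \underline{Aut}(R/F)$ is represented by the $C$-algebra $C_{R \otimes_F R}$ (Theorem \ref{representability}), we have $C_{R \otimes_F R} = C[\G]$, so the isomorphism reads $R \otimes_F R \cong R \otimes_C C[\G]$. The goal is to descend the left-hand factor $R$ down to $\overline{F}$; the point is that $R \otimes_F \overline{F}$ should play the role of a ``fat point'' over which the torsor $\Spec(R \otimes_F R) \to \Spec(R)$ trivializes.

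Next I would pick a maximal ideal, or more robustly proceed as follows: base change the isomorphism $R \otimes_F R \cong R \otimes_C C[\G]$ along a chosen $F$-algebra homomorphism $R \to \overline{F}$ (such a homomorphism exists because $R$ is a nonzero finitely generated $F$-algebra, hence has a point valued in $\overline{F}$ by the Nullstellensatz). Applying $\underdash \otimes_{R} \overline{F}$ to the left tensor factor of $R \otimes_F R$ turns it into $R \otimes_F \overline{F}$, and does the same to the right-hand side, giving $R \otimes_F \overline{F} \cong \overline{F} \otimes_C C[\G] = C[\G] \otimes_C \overline{F}$. This is the asserted isomorphism; note it is an isomorphism of $\overline{F}$-algebras (forgetting the difference structure, which is all that is needed for the dimension statement, though one can keep track of it too).

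For the dimension statement, I would argue that Krull dimension is preserved under the faithfully flat base change $F \to \overline{F}$: $\dim(R) = \dim(R \otimes_F \overline{F})$. By the displayed isomorphism this equals $\dim(C[\G] \otimes_C \overline{F})$, which in turn equals $\dim(C[\G]) = \dim(\G)$ again because $C \to \overline{F}$ is faithfully flat (here one uses that $\G$ is an affine group scheme over $C$, so $C[\G]$ is finitely presented and dimension behaves well). Chaining these equalities yields $\dim(R) = \dim(\G)$.

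The main obstacle, such as it is, is purely bookkeeping: making sure the $R$-algebra structure used in the base change $\underdash \otimes_R \overline{F}$ is the one coming from the correct (left) tensor factor, and verifying that the isomorphism of Proposition \ref{torsorprep} is indeed linear over that factor — which is exactly what ``$R$-linear'' in its statement asserts. A minor subtlety is that $R$ need not be a domain, so one should phrase ``take a point of $R$ over $\overline{F}$'' via $\Hom_F(R, \overline{F}) \neq \emptyset$ rather than via a fraction field; this is fine since $R \neq 0$ is a finitely generated $F$-algebra. Everything else is standard commutative algebra about faithfully flat base change and Krull dimension.
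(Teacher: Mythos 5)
Your argument is correct and is essentially the standard way to deduce the corollary from Proposition \ref{torsorprep}: choose an $F$-point $R\to\overline{F}$ (exists by the Nullstellensatz since $R$ is a nonzero finitely generated $F$-algebra), base change the $R$-linear isomorphism $R\otimes_F R\cong R\otimes_C C[\G]$ along it, and read off $R\otimes_F\overline{F}\cong C[\G]\otimes_C\overline{F}$. The paper gives no explicit proof, but this is surely the intended route, and your bookkeeping of the $R$-module structures (left factor on both sides) is the right thing to check.

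One imprecision is worth flagging in the dimension step. You write that ``Krull dimension is preserved under the faithfully flat base change $F\to\overline{F}$,'' but that is false as a general principle: for instance $C\to C[x]$ is faithfully flat yet changes dimension, and even a field extension $K\to K'$ need not preserve dimension of an arbitrary $K$-algebra unless that algebra is of finite type over $K$. The correct statement you need (and use twice) is that for a finite type algebra $A$ over a field $K$ and any field extension $K'/K$, one has $\dim(A\otimes_K K')=\dim(A)$ (e.g.\ by Noether normalization). You do invoke finite presentation for $C[\G]$, which covers the base change $C\to\overline{F}$; for the base change $F\to\overline{F}$ you should say explicitly that $R=F[Y,Y^{-1}]$ is finitely generated over $F$ (which is part of the definition of a Picard--Vessiot ring), rather than resting on faithful flatness alone. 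With that small repair the proof is complete.
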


An explicit linearization of $\G=\underline{Aut}(R/F)$ can be given using a fundamental solution matrix:

\begin{prop}\label{closedemb}
 Let $R$ be a Picard-Vessiot ring for a difference module over a difference field $(F,\phi)$. Let $C$ be the field of constants and let $\G$ be the Galois group scheme. Assume further that $R$ is separable over $F$. Then there is a closed embedding $\rho \colon \G \hookrightarrow \GL_n$ of linear algebraic groups such that for any $C$-algebra $S$, we have
\[\rho_S \colon \G(S)=\Aut^\phi(R\otimes_C S/F\otimes_C S)\rightarrow \GL_n(S), \ \sigma \mapsto Y^{-1}\sigma(Y). \]
\end{prop}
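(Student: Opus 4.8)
The plan is to verify directly that $\sigma\mapsto Y^{-1}\sigma(Y)$ is a natural transformation of group functors, then reinterpret it via Proposition~\ref{torsorprep} as a morphism of affine group schemes given by an explicit comorphism, and finally deduce closedness from the fact that $R=F[Y,Y^{-1}]$.

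First I would check well-definedness. Fix a $C$-algebra $S$ and $\sigma\in\G(S)=\Aut^{\phi}(R\otimes_C S/F\otimes_C S)$. Since $\sigma$ commutes with $\phi\otimes\Id$, fixes $F\otimes_C S$ (in particular $D$), and $D\phi(Y)=Y$, one computes
\[\phi\bigl(Y^{-1}\sigma(Y)\bigr)=\phi(Y)^{-1}\sigma\bigl(\phi(Y)\bigr)=(D^{-1}Y)^{-1}\sigma(D^{-1}Y)=Y^{-1}D\cdot D^{-1}\sigma(Y)=Y^{-1}\sigma(Y),\]
so the entries of $Y^{-1}\sigma(Y)$ are constants of $R\otimes_C S$. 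Using the standard fact that the constants of $R\otimes_C S$ equal $S$ (a consequence of Proposition~\ref{torsorprep}; cf.\ \cite{Diss}), we get a well-defined map $\rho_S\colon\G(S)\to\GL_n(S)$, $\sigma\mapsto Y^{-1}\sigma(Y)$, manifestly natural in $S$. It is a homomorphism: writing $\tau(Y)=Y\rho_S(\tau)$ with $\rho_S(\tau)\in\GL_n(S)$ and using that $\sigma$ fixes $S\subseteq F\otimes_C S$,
\[\rho_S(\sigma\tau)=Y^{-1}\sigma\bigl(\tau(Y)\bigr)=Y^{-1}\sigma(Y)\,\sigma\bigl(\rho_S(\tau)\bigr)=Y^{-1}\sigma(Y)\,\rho_S(\tau)=\rho_S(\sigma)\rho_S(\tau).\]
Hence $\rho$ is a morphism of group functors, and since $\G$ and $\GL_n$ are representable (Theorem~\ref{representability}) it is a morphism of affine group schemes over $C$.

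Next I would make $\rho$ explicit. Since $D\in\GL_n(F)$, both $Y\otimes1$ and $1\otimes Y$ are fundamental matrices for $M$ over $(R\otimes_F R,\phi\otimes\phi)$, so $\mathcal{Z}:=(Y\otimes1)^{-1}(1\otimes Y)$ has entries in $C_{R\otimes_F R}=C[\G]$. Under the $R$-linear difference isomorphism $R\otimes_F R\cong R\otimes_C C[\G]$ of Proposition~\ref{torsorprep}, the element $1\otimes Y$ is identified with $(Y\otimes1)\mathcal{Z}$, and the right embedding $R\hookrightarrow R\otimes_F R$ becomes the universal automorphism; specializing along a point $\bar\sigma\colon C[\G]\to S$ recovers $\rho_S(\sigma)=Y^{-1}\sigma(Y)=\bar\sigma(\mathcal{Z})$. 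Thus $\rho$ is the morphism corresponding to the $C$-algebra homomorphism $C[\GL_n]=C[X_{ij},\det^{-1}]\to C[\G]$, $X_{ij}\mapsto\mathcal{Z}_{ij}$. For closedness: because $R=F[Y,Y^{-1}]$, the ring $R\otimes_F R$ is generated over its left factor $R$ by the entries of $1\otimes Y$ and $1\otimes\det(Y)^{-1}$; transporting along the isomorphism above, $R\otimes_C C[\G]$ is generated over $R$ by the entries of $\mathcal{Z}$ and $\det(\mathcal{Z})^{-1}$. Hence $R\otimes_C A=R\otimes_C C[\G]$, where $A\subseteq C[\G]$ is the $C$-subalgebra generated by the $\mathcal{Z}_{ij}$ and $\det(\mathcal{Z})^{-1}$. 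As $R$ is a nonzero $C$-vector space it is faithfully flat over $C$, whence $A=C[\G]$; i.e.\ the comorphism $X_{ij}\mapsto\mathcal{Z}_{ij}$ is surjective and $\rho$ is a closed immersion. Since $R/F$ is separable, $\G$ is a linear algebraic group (Theorem~\ref{representability}), so $\rho\colon\G\hookrightarrow\GL_n$ is a closed embedding of linear algebraic groups.

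The step I expect to be the main obstacle is the identification in the third paragraph: showing that the scheme morphism defined by the comorphism $X_{ij}\mapsto\mathcal{Z}_{ij}$ really is the functor-of-points map $\sigma\mapsto Y^{-1}\sigma(Y)$, which requires carefully tracing how the universal automorphism acts under the torsor isomorphism of Proposition~\ref{torsorprep}. Once this is in place, the closed-immersion claim follows formally from $R=F[Y,Y^{-1}]$ together with faithful flatness of $R$ over $C$.
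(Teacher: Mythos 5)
Your proof is correct and follows the standard route: checking that $Y^{-1}\sigma(Y)$ has constant entries, verifying the homomorphism property, identifying the comorphism with $X_{ij}\mapsto\mathcal{Z}_{ij}$ where $\mathcal{Z}=(Y\otimes1)^{-1}(1\otimes Y)$, and deducing surjectivity of the comorphism from $R=F[Y,Y^{-1}]$ together with the torsor isomorphism of Proposition~\ref{torsorprep} and faithful flatness of $R$ over the field $C$. The paper cites \cite{Diss} and \cite{Wibmer} for the proof rather than reproducing it, and this is precisely the argument those sources use, so your approach matches the intended one.
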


Proposition \ref{closedemb} becomes particularly useful for obtaining upper bounds on the Galois group $\G$: Let $R/F$ be a separable Picard-Vessiot ring with Galois group scheme $\G$. Assume that there exists a fundamental solution matrix $Y$ that is contained in $\tilde \G(R)$ for some closed subgroup $\tilde \G \leq \GL_n$ defined over $C$. Then for all $\gamma \in \Aut(R\otimes_C S/F\otimes_C S)$, $\gamma(Y)$ is contained in $\tilde \G(R\otimes_C S)$ and $\G_{R/F} \cong \rho(\G_{R/F} )$ is thus contained in $\tilde \G$. \\
\\
The following proposition consists of one direction of a Galois correspondence for difference modules and can be proven directly using Proposition \ref{torsorprep}. 

\begin{prop}\label{Invarianten}
Let $(R,\phi)$ be a Picard-Vessiot ring over a difference field $(F,\phi)$ with Galois group scheme $\G$. Let $\frac{a}{b}$ be an element in the total quotient ring of $R$ (the localization at the set of all non zero divisors of $R$). If $\frac{a}{b}$ is functorially invariant under the action of $\G$, i.e., for every $C$-algebra $S$ and every $\sigma \in \Aut^\phi(R\otimes_C S/ F\otimes_C S)$ we have 
\[\sigma(a\otimes_C 1)\cdot(b\otimes_C 1)=(a \otimes_C 1)\cdot \sigma(b\otimes_C 1), \] then $\frac{a}{b}$ is contained in $F$. 
\end{prop}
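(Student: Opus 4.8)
The plan is to reduce the statement to the torsor isomorphism of Proposition \ref{torsorprep} by testing functorial invariance on the single ``generic'' $C$-algebra $S = C_{R \otimes_F R}$, i.e.\ the coordinate ring of $\G$. First I would recall that by Proposition \ref{torsorprep} there is an $R$-linear difference isomorphism $R \otimes_F R \cong R \otimes_C C_{R\otimes_F R}$, and that under this identification the two inclusions $\iota_1, \iota_2 \colon R \to R \otimes_F R$, $r \mapsto r\otimes 1$ and $r \mapsto 1 \otimes r$, correspond respectively to the inclusion $R \hookrightarrow R \otimes_C C_{R\otimes_F R}$ via the left factor and to the map $\sigma_{\mathrm{gen}} \in \Aut^\phi(R \otimes_C C_{R\otimes_F R} / F \otimes_C C_{R\otimes_F R})$ obtained by base-changing the generic automorphism; this $\sigma_{\mathrm{gen}}$ is exactly the element of $\G(C_{R\otimes_F R})$ corresponding to $\Id$ under Yoneda, so applying the hypothesis with $S = C_{R\otimes_F R}$ and $\sigma = \sigma_{\mathrm{gen}}$ is the universal instance of the invariance condition.

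Next I would rewrite the hypothesis in $R \otimes_F R$. Applying the isomorphism, the relation $\sigma_{\mathrm{gen}}(a \otimes 1)\cdot(b \otimes 1) = (a \otimes 1)\cdot\sigma_{\mathrm{gen}}(b\otimes 1)$ translates into the identity $(1 \otimes a)(b \otimes 1) = (a \otimes 1)(1 \otimes b)$ in $R \otimes_F R$, i.e.\ $b \otimes a = a \otimes b$. Now I want to conclude from $a \otimes b = b \otimes a$ in $R \otimes_F R$ that $a/b \in F$. Pick the element $\frac{a}{b}$ in the total quotient ring $\Quot_{\mathrm{tot}}(R)$; since $b$ is a non-zero-divisor, $1 \otimes b$ and $b \otimes 1$ are non-zero-divisors in $R \otimes_F R$ (here one uses that $R$ is flat, indeed free, over $F$, so tensoring preserves non-zero-divisors — or one works in $\Quot_{\mathrm{tot}}(R) \otimes_F \Quot_{\mathrm{tot}}(R)$). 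Dividing the identity $a \otimes b = b \otimes a$ by $b \otimes b$ gives $\frac{a}{b} \otimes 1 = 1 \otimes \frac{a}{b}$ in the appropriate localization of $R \otimes_F R$. Finally, an element $x$ of (the total quotient ring of) $R$ satisfying $x \otimes 1 = 1 \otimes x$ in $R \otimes_F R$ must lie in $F$: this is the standard descent fact that the equalizer of $\iota_1, \iota_2$ on $R$ is $F$, which for a Picard-Vessiot ring follows once more from Proposition \ref{torsorprep}, since there $1 \otimes x$ corresponds to $x$ acted on by the generic automorphism and $x \otimes 1$ to $x$ untouched, so $x \otimes 1 = 1 \otimes x$ says $x$ is fixed by every automorphism functorially, forcing $x \in C_{R \otimes_F R}$-invariants $= F$; alternatively one invokes faithful flatness of $F \to R$ directly.

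The main obstacle I expect is the bookkeeping around non-zero-divisors and total quotient rings: because $R$ need not be a domain, one must be careful that ``$\frac{a}{b}$'' makes sense and that passing to $R \otimes_F R$ does not create new zero-divisors that would invalidate the cancellation $b \otimes a = a \otimes b \Rightarrow \frac{a}{b}\otimes 1 = 1 \otimes \frac{a}{b}$. The clean way around this is to note $R$ is free as an $F$-module, hence $R \otimes_F R$ is a faithfully flat $R$-algebra via either factor, so a non-zero-divisor of $R$ stays a non-zero-divisor in $R \otimes_F R$, and the total quotient ring functor is compatible enough with this flat base change for the argument to go through. Once that technical point is pinned down, the chain $\sigma_{\mathrm{gen}}$-invariance $\Rightarrow a \otimes b = b \otimes a \Rightarrow \frac{a}{b} \otimes 1 = 1 \otimes \frac{a}{b} \Rightarrow \frac{a}{b} \in F$ is routine.
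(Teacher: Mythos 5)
Your proposal is correct and follows exactly the route the paper indicates: test the functorial invariance on the universal $C$-algebra $S=C_{R\otimes_F R}$, translate via the torsor isomorphism of Proposition~\ref{torsorprep} to the identity $a\otimes b=b\otimes a$ in $R\otimes_F R$, and conclude by faithfully flat descent (with the non-zero-divisor bookkeeping you flag being handled precisely because $F$ is a field, so $R$ and its total quotient ring are free, hence faithfully flat, over $F$). One small caveat: the aside that the descent step ``follows once more from Proposition~\ref{torsorprep}\ldots forcing $x\in C_{R\otimes_F R}$-invariants $=F$'' is circular as phrased; the clean way is the alternative you also give, namely invoking the exactness of $F\to Q(R)\rightrightarrows Q(R)\otimes_F Q(R)$ directly.
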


As a consequence, we get the following lemma. 
\begin{lemma}\label{eindim_UM}
Let $(M, \Phi)$ be an $m$-dimensional difference module over a difference field $(F,\phi)$, with Picard-Vessiot extension $E$, fundamental matrix $Y \in \GL_m(E)$ and Galois group scheme $\Hcal \leq \GL_m$. Suppose that there exists a $0 \neq w \in C_F^m$ that spans an $\Hcal$-stable line, i.e., for any $C_F$-algebra $S$, we have $\Hcal(S)\cdot w \subseteq S\cdot w$. Then there exists an $\alpha \in E^{\times}$ such that $v:=\alpha Y \! \!  \cdot \!  w \in F^m \cong M$ and $N:=F \! \cdot \! v$ defines a $\Phi$-stable submodule of $M$. 
\end{lemma}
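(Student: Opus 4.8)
The plan is to produce the element $v\in M$ by applying $Y$ to $w$ and then using Proposition~\ref{Invarianten} to ``descend'' the scaling ambiguity to the base field $F$. Concretely, set $u:=Y\cdot w\in E^m$. Since $D\phi(Y)=Y$, applying $\Phi\otimes\phi$ to $u$ (i.e.\ multiplying $\phi$-coordinate-wise by $D$) gives $\Phi(u)=D\phi(Y)\phi(w)=D\phi(Y)w=Yw=u$, using that $w$ has constant entries; so $u$ is a $\Phi$-invariant vector in $M\otimes_F E$, equivalently the columns of $Y$ pair with $w$ to give a horizontal element. The problem is that $u$ need not lie in $M=F^m$ itself; its entries are in $E$. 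To fix this I want to exhibit a single scalar $\alpha\in E^\times$ such that $\alpha u\in F^m$, and then $v:=\alpha u$ will span the desired line, with $N:=F\cdot v$ automatically $\Phi$-stable because $\Phi(v)=\Phi(\alpha u)=\phi(\alpha)\Phi(u)=\phi(\alpha)u=\bigl(\phi(\alpha)/\alpha\bigr)v\in F\cdot v$ (note $\phi(\alpha)/\alpha\in F$ will follow once $v\in F^m$ and $u\neq 0$, by comparing a nonzero coordinate).

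The heart of the argument is locating $\alpha$. Here I use the hypothesis that $w$ spans an $\Hcal$-stable line functorially: for every $C_F$-algebra $S$ and every $\sigma\in\Hcal(S)=\Aut^\phi(R\otimes_C S/F\otimes_C S)$ we have $\sigma(Y)=Y\cdot\rho_S(\sigma)$ with $\rho_S(\sigma)\in\GL_m(S)$ (Proposition~\ref{closedemb}), and $\rho_S(\sigma)^{\mathrm{t}}w=c_\sigma\, w$ for some $c_\sigma\in S^\times$ — more precisely $\Hcal$ stabilizes the line through $w$, so acting on the vector $w$ we get $\sigma$ multiplies $u=Yw$ by a unit. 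Thus for every such $S$ and $\sigma$, $\sigma(u\otimes 1)=c_\sigma\cdot(u\otimes 1)$ for a unit $c_\sigma\in S^\times$. Fix a coordinate index $k$ with $u_k\neq 0$ (such exists since $Y$ is invertible and $w\neq0$). Then for any two coordinates $i,j$, the ratio $u_i/u_j$ (in the total quotient ring of $R$, or better in $E$) satisfies $\sigma(u_i)u_j=c_\sigma u_i\cdot u_j=u_i\cdot c_\sigma u_j=u_i\sigma(u_j)$ functorially, i.e.\ $u_i/u_j$ is $\Hcal$-invariant in the sense of Proposition~\ref{Invarianten}, hence lies in $F$. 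In particular $u_i/u_k\in F$ for all $i$. Now take $\alpha:=1/u_k\in E^\times$; then $v:=\alpha u$ has $i$-th coordinate $u_i/u_k\in F$, so $v\in F^m\cong M$, and $v\neq0$ since $v_k=1$.

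The remaining points are formal. The line $N=F\cdot v$ is one-dimensional over $F$ because $v\neq 0$. It is $\Phi$-stable: from $\Phi(u)=u$ we get $\Phi(v)=\phi(1/u_k)u=\bigl(u_k/\phi(u_k)\bigr)^{-1}\cdot$—let me just write it cleanly as $\Phi(v)=\phi(\alpha)\Phi(u)=\phi(\alpha)u=\dfrac{\phi(\alpha)}{\alpha}\,v$; looking at the $k$-th coordinate, $\Phi(v)_k=(D\phi(v))_k$ lies in $F$ (as $v\in F^m$, $D\in\GL_m(F)$), while it equals $\bigl(\phi(\alpha)/\alpha\bigr)v_k=\phi(\alpha)/\alpha$, so $\phi(\alpha)/\alpha\in F$ and $\Phi(v)\in F\cdot v=N$. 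Hence $N$ is a $\Phi$-submodule of $M$ of dimension one, as claimed.

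The main obstacle I anticipate is the bookkeeping needed to pass from the hypothesis, phrased as $\Hcal(S)\cdot w\subseteq S\cdot w$ in the \emph{representation space} of $\Hcal\leq\GL_m$, to the statement that $\sigma$ acts on $u=Yw$ by a scalar for \emph{every} $C_F$-algebra $S$ simultaneously — one must be careful that $\rho_S(\sigma)$ is exactly the matrix by which $\sigma$ scales $Y$ (Proposition~\ref{closedemb}) and that the $\Hcal$-stable line hypothesis is about $w$ as a vector acted on by $\GL_m$, so that functoriality of the scalars $c_\sigma$ across all $S$ is precisely what makes Proposition~\ref{Invarianten} applicable to the ratios $u_i/u_k$. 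Once that identification is set up correctly, the rest is the short computation above. A minor subtlety: $E=\Quot(R)$ is a field here (we are in the Picard-Vessiot \emph{extension} case), so there is no issue with zero divisors and the ratios $u_i/u_k$ genuinely live in $E$; one only needs the total-quotient-ring version of Proposition~\ref{Invarianten} to conclude they lie in $F$.
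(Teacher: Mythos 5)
Your proof is correct and follows exactly the route the paper intends (the lemma is stated as "a consequence" of Proposition~\ref{Invarianten}, and your argument derives it from that proposition in the natural way: form $u=Yw$, observe $\sigma(u\otimes 1)=c_\sigma(u\otimes 1)$ functorially from the $\Hcal$-stability of the line through $w$, apply Proposition~\ref{Invarianten} to the ratios $u_i/u_k$, and read off $\Phi$-stability of $Fv$ from the $k$-th coordinate). One small slip: you wrote $\rho_S(\sigma)^{\mathrm{t}}w=c_\sigma w$, but since $\sigma(Yw)=Y\rho_S(\sigma)w$ it is $\rho_S(\sigma)w=c_\sigma w$ (no transpose) that is needed and that the hypothesis $\Hcal(S)\cdot w\subseteq S\cdot w$ supplies; you effectively correct this in the very next clause, so the argument is unaffected.
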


We conclude this section with a theorem concerning base change of Picard-Vessiot rings. It can easily be proven using Corollary \ref{torsorcor}. 

\begin{thm}\label{basechangethm}
Let $(F,\phi)$ be a difference field and let $R/F$ be a Picard-Vessiot ring for a difference module $M$ over $F$ such that its Galois group scheme $\G$ is a connected linear algebraic group. \\
If $(\tilde F, \phi)$ is an algebraic difference field extension of $F$ such that $\tilde F$ and $R$ are both contained in some common difference field $L$ without new constants, then $R\otimes_F\tilde F$ is a Picard-Vessiot ring over $(\tilde F,\phi)$ for $M\otimes_F \tilde F$ with Galois group scheme $\G$. 
\end{thm}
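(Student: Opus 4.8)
The plan is to realize the sought Picard--Vessiot ring explicitly inside $L$ and then to force equality of Galois groups by a dimension comparison via Corollary~\ref{torsorcor}. First I would fix a fundamental matrix $Y \in \GL_n(L)$ for $M$, so that $R = F[Y,Y^{-1}]$, and set $\tilde R \coloneqq \tilde F[Y,Y^{-1}] \subseteq L$. Since $\phi(Y)=D^{-1}Y$ with $D \in \GL_n(F) \subseteq \GL_n(\tilde F)$, the ring $\tilde R$ is a difference subring of $L$, and as $L$ has no new constants its constant field is $C$; hence Theorem~\ref{Pap4.2.5}, applied over $\tilde F$ with the extension $L/\tilde F$ and the same $Y$, shows that $\tilde R$ is the unique Picard--Vessiot ring for $M\otimes_F\tilde F$ contained in $L$. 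Write $\tilde\G$ for its Galois group scheme. Moreover $R = F[Y,Y^{-1}] \subseteq \tilde F[Y,Y^{-1}] = \tilde R$, and $\tilde R$ is generated over $R$ by $\tilde F$, whose elements are algebraic over $F$ and hence integral over $R$; thus $\tilde R$ is integral over $R$, and therefore $\dim(\tilde R) = \dim(R)$.

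Next I would compare $\tilde\G$ with $\G$. Given a $C$-algebra $S$ and $\sigma \in \tilde\G(S) = \Aut^\phi(\tilde R \otimes_C S / \tilde F \otimes_C S)$, both $Y$ and $\sigma(Y)$ are fundamental matrices for $M\otimes_F\tilde F$ over $\tilde R\otimes_C S$, so $M_\sigma \coloneqq Y^{-1}\sigma(Y)$ has entries in the constants of $\tilde R\otimes_C S$, which equal $S$; thus $M_\sigma \in \GL_n(S)$. Since $\sigma(Y) = YM_\sigma$, the automorphism $\sigma$ carries the generators $Y_{ij}, \det(Y)^{-1}$ of $R\otimes_C S$ back into $R\otimes_C S$ and fixes $F\otimes_C S$, so it restricts to an element of $\G(S)$; this restriction is natural in $S$ and compatible with the embeddings into $\GL_n$ of Proposition~\ref{closedemb} (defined through the same $Y$), whence $\tilde\G \hookrightarrow \G$ is a closed immersion. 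By Corollary~\ref{torsorcor} we get $\dim\tilde\G = \dim\tilde R = \dim R = \dim\G$. As $\G$ is a connected linear algebraic group it is geometrically connected and geometrically reduced, so $\G\times_C\overline C$ is an irreducible variety; a closed subscheme of it of the same dimension must be all of it (its defining ideal is nilpotent, hence zero by reducedness), so $\tilde\G\times_C\overline C = \G\times_C\overline C$ and therefore $\tilde\G = \G$.

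Finally I would identify $R\otimes_F\tilde F$ with $\tilde R$. The multiplication map $R\otimes_F\tilde F \twoheadrightarrow \tilde R$ is surjective; applying $-\otimes_{\tilde F}\overline F$ and using Corollary~\ref{torsorcor} for both $R/F$ and $\tilde R/\tilde F$ together with $\tilde\G = \G$, it becomes a surjective map between two copies of the Noetherian ring $C[\G]\otimes_C\overline F$, hence an isomorphism. Since $\overline F$ is faithfully flat over $\tilde F$, the map $R\otimes_F\tilde F \to \tilde R$ is itself an isomorphism, and transporting the difference structure we conclude that $R\otimes_F\tilde F$ is a Picard--Vessiot ring for $M\otimes_F\tilde F$ over $\tilde F$ with Galois group scheme $\G$. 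The only step that needs genuine care is the dimension bookkeeping of the previous paragraph: one must check that passing from $\tilde F$ down to $F$ does not change the relevant Krull dimension --- handled by integrality of $\tilde R$ over $R$ --- and that connectedness of $\G$ can be exploited after base change to $\overline C$; everything else reduces to Theorems~\ref{Pap4.2.5} and~\ref{representability} and is routine.
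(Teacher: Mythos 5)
Your proof is correct and follows exactly the route the paper signals (``It can easily be proven using Corollary~\ref{torsorcor}''): realize $\tilde R=\tilde F[Y,Y^{-1}]$ inside $L$, invoke Theorem~\ref{Pap4.2.5} to see it is a Picard--Vessiot ring for $M\otimes_F\tilde F$, use integrality of $\tilde R/R$ together with Corollary~\ref{torsorcor} to force $\dim\tilde\G=\dim\G$ so that connectedness of $\G$ gives $\tilde\G=\G$, and finally descend the surjection $R\otimes_F\tilde F\twoheadrightarrow\tilde R$ to an isomorphism via Corollary~\ref{torsorcor} and faithful flatness of $\overline F/\tilde F$.

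One small point worth stating explicitly in your write-up: the identification of $\tilde\G$ as a \emph{closed} subgroup scheme of $\G$ (via Proposition~\ref{closedemb} applied to $\tilde R/\tilde F$) tacitly uses that $\tilde R$ is separable over $\tilde F$; this is where the hypothesis that $\G$ is already a linear algebraic group (so $R/F$ is separable) together with the algebraicity of $\tilde F/F$ enters, and in the paper's actual applications $\tilde F/F$ is moreover separable, so no difficulty arises. With that caveat noted, the argument is complete and is essentially the paper's intended proof.
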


\section{Notation}\label{notation} \label{sec3}
We define difference fields $k(t)\subseteq K(t)\subseteq L$ with field of constants $\Fq(t)$. 
\begin{description}
\item[$q$] a power of a prime $p$.
\item[$k$] a field containing $\Fq$ with a fixed (non-trivial) non-archimedian absolute value $|\cdot|$. 
\item[$K$] the completion of an algebraic closure of the completion of $k$ with respect to $|\cdot|$. Note that $K$ is algebraically closed.
\item[$\overline{k}$] the algebraic closure of $k$ contained in $K$.
\item[$\ksep$] the separable algebraic closure of $k$ contained in $K$.
\item[$\Ocal_{|\cdot|}$] the valuation ring in $K$ corresponding to $|\cdot|$.
\item[$\mfrak$] the maximal ideal inside $\Ocal_{|\cdot|}$.
\item[$K\{t\}$] the ring of power series that converge on the closed unit disk: $K\{t\}:=\{\sum_{i=0}^{\infty}\alpha_it^i \in K[[t]]\ | \ \lim \limits_{i\rightarrow \infty}|\alpha_i|=0\}$.
\item[$L$] the field of fractions of $K\{ t \}$: $L=\operatorname{Quot}(K\{t\})$.
\item[$(K,\phi_q)$] $\phi_q \colon K \rightarrow K, \ \lambda \mapsto \lambda^q$ is the ordinary Frobenius endomorphism on $K$. The field of constants then equals $C_K=\Fq$. 
\item[$(K((t)),\phi_q)$] The action of $\phi_q$ on Laurent series over $K$ is defined coefficient-wise, i.e. $C_{K((t))}=\Fq((t))$.
\item[$(L,\phi_q)$] The action of $\phi_q$ on $K((t))$ induces a homomorphism on $L \subseteq K((t))$. The field of constants then equals $C_L=\Fq(t)$.
\item[$(k(t),\phi_q)$] the difference structure on $k(t)$ is induced by that on $K(t) \subseteq L$, i.e., $\phi_q$ only acts on the coefficients of a rational function. Then $C_{k(t)}=\Fq(t)$ holds.
\item[$M_n$] $n\times n$-matrices.
 \end{description}

\begin{ex}
The standard examples are $k=\Fq(s)$, a function field in one variable with an $s$-adic absolute value $|\cdot|$ or $k=\overline{\Fq(s)}$.\\ One might also consider function fields $\Fq(s_1,\dots,s_n)$ in several variables with $|\cdot|$ for instance an $s_1$-adic absolute value. 
\end{ex}

\begin{rem}\label{remsep}
\begin{enumerate}
 \item Note that $L/k(t)$ is usually not a separable extension (as $K/k$ might not be separable), and thus $(\Fq(t), k(t), L)$ is not a $\phi_q$-admissible triple as defined in \cite[4.1.]{Papanikolas}. 
\item  Sometimes people work with the inverse $\sigma$ of $\phi_q$ instead of $\phi_q$, but since this is not defined on our base field $k(t)$ if $k$ is not perfect, we prefer to work with $\phi_q$, instead. 
\end{enumerate}
\end{rem}

\section{Bounds on Difference Galois Groups} \label{sec4}
In this section, we prove upper and lower bounds on the Galois group of a difference module over $(k(t), \phi_q)$ that only depend on a representing matrix of the difference module. These criteria are aimed to construct difference modules with prescribed Galois group schemes. 
\subsection{Existence of Picard-Vessiot Extensions} \label{EX}
We first give a criterion (Theorem \ref{thmexistenceofsol}) that provides us with a fundamental solution matrix $Y \in \GL_n(L)$ to a given difference module over $(k(t), \phi_q)$. Note that $Y \in \GL_n(L)$ ensures the existence of a Picard Vessiot ring (see Theorem \ref{Pap4.2.5}). 
We start with a multidimensional version of Hensel's Lemma. For $m \in \N$, let $|| \cdot ||$ denote the maximum norm on $K^m$ induced by $|\cdot|$: \[||(a_1,\dots,a_m)||:=\max\{|a_i| \ | \ 1 \leq i \leq m \}.\]
\begin{lemma}[Hensel's Lemma]\label{Hensel}  
 Let $f_1,\dots,f_m \in \Ocal_{|\cdot|}[X_1,\dots,X_m]$ be a system of $m$ polynomials in $m$ variables with coefficients in $\Oabs$. Assume that there exists a vector $b=(b_1,\dots,b_m) \in \Ocal_{|\cdot|}^m$ such that $||(f_1(b),\dots,f_m(b))||<|\det(J_b)|^2$, where $J_b=(\frac{\partial f_i}{\partial X_j}(b))_{i,j}$ denotes the Jacobian matrix at $b$. Then there is a unique $a \in \Ocal_{|\cdot|}^m$ satisfying $f_i(a)=0$ for all $1 \leq i \leq m$ and \[||a-b|| =\frac{|| J_b^{*}\cdot(f_1(b),\dots,f_m(b))^{\tr}||}{|\det(J_b)|},\] where $J_b^{*}$ denotes the adjoint matrix of $J_b$.
\end{lemma}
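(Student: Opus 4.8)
The plan is to reduce the multidimensional statement to the well-known one-variable Hensel's Lemma applied to a single auxiliary polynomial, or, more self-containedly, to run a Newton iteration directly in $\Ocal_{|\cdot|}^m$ and prove it converges. I would take the second route since it also yields the claimed formula for $\|a-b\|$. First I would set $c=(f_1(b),\dots,f_m(b))^{\tr}$, so the hypothesis reads $\|c\| < |\det(J_b)|^2$, and define the Newton step $b' = b - J_b^{-1}c$, where $J_b^{-1} = \frac{1}{\det(J_b)}J_b^{*}$. Since the entries of $J_b^{*}$ lie in $\Ocal_{|\cdot|}$ (they are polynomials in the $b_i$ with integer coefficients, evaluated at points of the valuation ring), we get $\|b'-b\| \le \|c\|/|\det(J_b)| < |\det(J_b)| \le 1$, so $b' \in \Ocal_{|\cdot|}^m$ and moreover $\|b'-b\| = \|J_b^{*}c\|/|\det(J_b)|$ exactly (the inequality above is in fact an equality because $J_b^{-1}c$ has that norm). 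This establishes the displayed formula already at the first step.

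Next I would estimate the new residue. Taylor-expanding each $f_i$ around $b$ with integer (hence $\Ocal_{|\cdot|}$-) coefficients, $f_i(b') = f_i(b) + \sum_j \frac{\partial f_i}{\partial X_j}(b)(b'_j - b_j) + (\text{quadratic and higher terms in }b'-b)$. The linear part cancels $c$ by the choice of $b'$, so $f_i(b')$ is a sum of terms each divisible by a product of at least two coordinates of $b'-b$, all of which have norm $\le \|c\|/|\det(J_b)|$. Hence $\|(f_1(b'),\dots,f_m(b'))\| \le \|c\|^2/|\det(J_b)|^2$. I would also check that the Jacobian does not degenerate: since $\|b'-b\| < |\det(J_b)|$ and $\det J_X$ is a polynomial with $\Ocal_{|\cdot|}$-coefficients, $|\det(J_{b'}) - \det(J_b)| \le \|b'-b\| < |\det(J_b)|$, so by the ultrametric inequality $|\det(J_{b'})| = |\det(J_b)|$. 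Therefore $b'$ satisfies the same hypotheses as $b$, with residue norm strictly smaller: writing $\theta = \|c\|/|\det(J_b)|^2 < 1$, we have $\|(f_i(b'))_i\| \le \theta \cdot \|c\|$ and $\|(f_i(b'))_i\| < |\det(J_{b'})|^2$.

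Then I would iterate: define $b^{(0)} = b$, $b^{(k+1)} = b^{(k)} - J_{b^{(k)}}^{-1}(f_i(b^{(k)}))_i$. By induction $|\det(J_{b^{(k)}})| = |\det(J_b)|$ for all $k$, the residue norms satisfy $\|(f_i(b^{(k)}))_i\| \le \theta^{2^k - 1}\|c\| \to 0$, and the increments satisfy $\|b^{(k+1)} - b^{(k)}\| \le \theta^{2^k-1}\|c\|/|\det(J_b)| \to 0$, so $(b^{(k)})$ is Cauchy in the complete space $\Ocal_{|\cdot|}^m$ (completeness of $\Ocal_{|\cdot|}$ in $K$ is part of the setup, $K$ being complete). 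Its limit $a$ satisfies $f_i(a) = 0$ for all $i$ by continuity of polynomials, and since all increments after the first have norm strictly less than $\|b^{(1)}-b\|$, the ultrametric inequality gives $\|a - b\| = \|b^{(1)} - b\| = \|J_b^{*}c\|/|\det(J_b)|$, which is the asserted formula. For uniqueness, suppose $a, \tilde a \in \Ocal_{|\cdot|}^m$ both vanish on all $f_i$ with the given distance from $b$; subtracting the Taylor expansions of $f_i$ at $a$ evaluated at $\tilde a$ gives $0 = J_a(\tilde a - a) + (\text{quadratic in }\tilde a - a)$, and since $|\det(J_a)| = |\det(J_b)|$ is maximal while $\|\tilde a - a\|$ can be bounded via $\|\tilde a - b\|, \|a - b\| \le \|c\|/|\det(J_b)| < |\det(J_b)|$, the quadratic terms are strictly dominated, forcing $\tilde a - a = 0$.

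The main obstacle, and the only place requiring genuine care, is the bookkeeping of norms in the Taylor expansion step: one must verify that \emph{every} term of $f_i(b') - f_i(b) - \sum_j \frac{\partial f_i}{\partial X_j}(b)(b'_j-b_j)$ genuinely carries a factor that is a product of two increments (so the residue really squares), which follows from the second-order Taylor formula with the higher partial derivatives having $\Ocal_{|\cdot|}$-coefficients after division by the appropriate factorials — here one uses that the formal partials $\frac{1}{\alpha!}\partial^\alpha f_i$ still have coefficients in $\Ocal_{|\cdot|}$ because $f_i$ itself does and binomial coefficients are integers. Everything else is a routine ultrametric estimate. I would present the argument in the order above: define the Newton step, prove the one-step estimates (increment norm, residue squaring, Jacobian stability), iterate, pass to the limit, and finally dispatch uniqueness.
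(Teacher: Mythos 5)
The paper itself does not prove Lemma~\ref{Hensel}; it simply notes that $K$ is henselian (being complete of rank one) and cites Theorem 23 and 24 of Kuhlmann. So your self-contained Newton-iteration proof is a genuinely different route: it replaces the general henselian argument with a concrete contraction-mapping argument that uses completeness of $\Ocal_{|\cdot|}$ directly. This is perfectly appropriate here, since the paper's $K$ is complete by construction. The structure you lay out---Newton step in $\Ocal_{|\cdot|}^m$, residue squaring via the $\Ocal_{|\cdot|}$-coefficient Taylor expansion (correctly using that the divided partials $\frac{1}{\alpha!}\partial^\alpha f_i$ still have $\Ocal_{|\cdot|}$-coefficients because the binomial coefficients are integers), stability of $|\det(J_\bullet)|$ along the iteration, passage to the limit, and the uniqueness argument via $\|\tilde a-a\|\le\|\tilde a-a\|^2/|\det(J_a)|$---is sound and is the standard way to do this.

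One step, however, is stated with a bound that does not quite deliver what you claim. To get the exact equality $\|a-b\|=\|b^{(1)}-b\|$ you assert that ``all increments after the first have norm strictly less than $\|b^{(1)}-b\|$,'' and you justify the size of later increments by the estimate $\|b^{(k+1)}-b^{(k)}\|\le\theta^{2^k-1}\|c\|/|\det(J_b)|$. But $\|b^{(1)}-b\|=\|J_b^*c\|/|\det(J_b)|$ can be strictly smaller than $\|c\|/|\det(J_b)|$, and from the bound you wrote one only gets $\|b^{(2)}-b^{(1)}\|\le\theta\|c\|/|\det(J_b)|=\|c\|^2/|\det(J_b)|^3$, which is not in general smaller than $\|J_b^*c\|/|\det(J_b)|$ when $|\det(J_b)|<1$. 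The fix is to compare consecutive increments directly: since the higher-order Taylor terms at $b^{(k)}$ give $\|c^{(k+1)}\|\le\|b^{(k+1)}-b^{(k)}\|^2$, one has $\|b^{(k+2)}-b^{(k+1)}\|\le\|b^{(k+1)}-b^{(k)}\|^2/|\det(J_b)|<\|b^{(k+1)}-b^{(k)}\|$ as soon as $\|b^{(k+1)}-b^{(k)}\|<|\det(J_b)|$, which holds for $k=0$ by hypothesis and then propagates by induction. With this replacement the increment sequence is strictly decreasing and the ultrametric equality $\|a-b\|=\|b^{(1)}-b\|$ follows (the case $c=0$ being trivial). Everything else in your write-up is correct.
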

\noindent This version of Hensel's Lemma is sometimes also called multi-dimensional Newton's Lemma. It holds for all henselian fields (note that $K$ is henselian as it is complete with respect to a rank one valuation). For a proof, see Theorem 23 and 24 of \cite{Kuhlmann}.

\begin{cor}\label{Henselqadd}
Let $A$ and $B$ be contained in $\Mn(\Ocal_{|\cdot|})$ and consider the system of polynomial equations 
\[AY^q-Y+B=0, \] where $Y=(Y_{ij})_{i,j \leq n}$ consists of $n^2$ indeterminates and $Y^q:=(Y_{ij}^q)_{i,j}$. Assume that there exists a $Y' \in \Mn(\Ocal_{|\cdot|})$ such that $||A(Y')^q-Y'+B||<1$. Then there exists a unique solution $Y\in \Mn(\Ocal_{|\cdot|})$ of $AY^q-Y+B=0$ such that $||Y-Y'||= ||A(Y')^q-Y'+B||$.
\end{cor}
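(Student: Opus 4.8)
The plan is to deduce Corollary \ref{Henselqadd} from the multidimensional Hensel's Lemma (Lemma \ref{Hensel}) by viewing the matrix equation $AY^q - Y + B = 0$ as a system of $m = n^2$ polynomial equations in the $m = n^2$ indeterminates $Y_{ij}$, with coefficients in $\Ocal_{|\cdot|}$. Writing $f = (f_{ij})$ for this system, where $f_{ij} = \sum_k A_{ik} Y_{kj}^q - Y_{ij} + B_{ij}$, the first step is to compute the Jacobian matrix $J_{Y'}$ at the proposed approximate solution $Y'$ and check the hypothesis $\|f(Y')\| < |\det(J_{Y'})|^2$ required by Lemma \ref{Hensel}.

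The key observation is that, because we are in characteristic $p$ and $q$ is a power of $p$, the partial derivative of $Y_{kj}^q$ with respect to any indeterminate vanishes identically: $\frac{\partial}{\partial Y_{kl}}(Y_{kj}^q) = 0$. Hence the only contribution to $\frac{\partial f_{ij}}{\partial Y_{kl}}$ comes from the linear term $-Y_{ij}$, which gives $-1$ when $(k,l) = (i,j)$ and $0$ otherwise. Therefore $J_{Y'} = -I_m$ is the negative identity matrix, so $\det(J_{Y'}) = \pm 1$, $|\det(J_{Y'})| = 1$, and the adjoint satisfies $J_{Y'}^{*} = \pm I_m$. The hypothesis $\|f(Y')\| < |\det(J_{Y'})|^2 = 1$ is then exactly the assumption $\|A(Y')^q - Y' + B\| < 1$ made in the statement.

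Having verified the hypotheses, Lemma \ref{Hensel} yields a unique $a \in \Ocal_{|\cdot|}^m$ with $f(a) = 0$; reassembling the coordinates of $a$ into a matrix gives the desired unique $Y \in \Mn(\Ocal_{|\cdot|})$ solving $AY^q - Y + B = 0$. For the norm statement, the conclusion of Lemma \ref{Hensel} gives
\[
\|a - b\| = \frac{\|J_b^{*}\cdot f(b)^{\tr}\|}{|\det(J_b)|},
\]
and substituting $b = Y'$, $J_{Y'}^{*} = \pm I_m$, $|\det(J_{Y'})| = 1$ collapses the right-hand side to $\|f(Y')\|$, i.e. $\|Y - Y'\| = \|A(Y')^q - Y' + B\|$, as claimed. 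One should also remark that all the $f_{ij}$ indeed have coefficients in $\Ocal_{|\cdot|}$, which follows from $A, B \in \Mn(\Ocal_{|\cdot|})$, so that Lemma \ref{Hensel} genuinely applies.

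I do not anticipate a serious obstacle here: the corollary is essentially the special case of Hensel's Lemma in which the Jacobian is unimodular, and the characteristic-$p$ phenomenon that makes $q$-th powers have vanishing derivative is what makes this happen automatically. The only points requiring a modicum of care are bookkeeping — setting up the bijection between the $n^2$ matrix entries and the $m$ coordinates consistently, and checking that the max-norm $\|\cdot\|$ on $\Mn(K) \cong K^{n^2}$ used in the statement matches the norm $\|\cdot\|$ on $K^m$ in Lemma \ref{Hensel} — both of which are immediate.
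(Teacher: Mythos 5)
Your proposal is correct and follows the same route as the paper: apply the multidimensional Hensel's Lemma to the system $f_{rs} = \sum_m A_{rm}Y_{ms}^q - Y_{rs} + B_{rs}$, observe that the Jacobian is $-I_{n^2}$ (you spell out the characteristic-$p$ reason that the $q$-th-power terms have vanishing partials, which the paper leaves implicit), and read off both the unimodularity hypothesis and the norm conclusion directly from Lemma \ref{Hensel}.
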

\begin{proof}
This is an immediate consequence of Lemma \ref{Hensel}. Indeed, let 
$f_{rs} \in \Ocal_{|\cdot|}[Y_{ij}\ | \ 1\leq i,j \leq n]$, $1\leq r, s \leq n$ be the system of polynomials defining $AY^q-Y+B=0$ and let $A_{rs}, B_{rs}$ be the coordinates of $A$ and $B$ ($1 \leq r,s \leq n$). Then \[f_{rs}=\sum_{m=1}^{n}A_{rm}Y_{ms}^q-Y_{rs}+B_{rs},\] hence $\frac{\partial f_{rs}}{\partial Y_{ij}}=-\delta_{(i,j),(r,s)}$. This means that $J_b$ equals the negative of the $n^2\times n^2$ identity matrix for all $b \in M_n(K)$, so $Y'$ meets the assumptions of the element $b$ in Lemma \ref{Hensel}. Also, up to a sign, $J_{Y'}^{*}$ equals the identity matrix, so the claim follows. 
\end{proof}

\begin{thm} \label{thmexistenceofsol}
Let $D=\sum_{l=0}^{\infty} D_lt^l \in \GL_n(\Ocal_{|\cdot|}[[t]])$ (with $D_l \in \Mn(\Ocal_{|\cdot|})$) be such that there exists a $\delta < 1$ with 
\[ ||D_l|| \leq \delta^l \] for all $l \in \N$. Then there exists a fundamental matrix $Y \in \GL_n(L)$ for $D$, i.e., $D\phi_q(Y)=Y$. More precisely, $Y=\sum_{l=0}^{\infty}Y_lt^l \in \GL_n(\Oabs[[t]])$ with $Y_l \in \Mn(\Ocal_{|\cdot|})$ satisfying $||Y_l|| \leq \delta^l$ for all $l \in \N$. 
\end{thm}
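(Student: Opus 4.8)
The plan is to construct the fundamental matrix $Y = \sum_{l \geq 0} Y_l t^l$ coefficient by coefficient, comparing the two sides of the equation $D\phi_q(Y) = Y$ in powers of $t$ and solving the resulting recursion, with Corollary \ref{Henselqadd} providing the solution at each step. Writing $D = \sum_l D_l t^l$ and $Y = \sum_l Y_l t^l$, and using that $\phi_q$ acts coefficient-wise so that $\phi_q(Y) = \sum_l \phi_q(Y_l) t^l = \sum_l Y_l^{q} t^l$ (entrywise $q$-th powers), the relation $D \phi_q(Y) = Y$ becomes, in degree $l$,
\[
\sum_{m=0}^{l} D_m Y_{l-m}^{q} = Y_l,
\]
i.e.\ $D_0 Y_l^{q} - Y_l + \bigl(\sum_{m=1}^{l} D_m Y_{l-m}^{q}\bigr) = 0$. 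For $l = 0$ this reads $D_0 Y_0^q - Y_0 = 0$; for $l \geq 1$ it has the shape $D_0 Y_l^q - Y_l + B_l = 0$ where $B_l := \sum_{m=1}^{l} D_m Y_{l-m}^{q}$ depends only on the previously determined $Y_0, \dots, Y_{l-1}$.

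First I would handle $l = 0$: since $D_0 \in \GL_n(\Oabs)$ has entries in $\Oabs$ (note $\|D_0\| \le \delta^0 = 1$, and invertibility forces $\|D_0^{-1}\|\le 1$ as well because both $D_0$ and $D_0^{-1}$ have entries in $\Oabs$... more carefully, $D \in \GL_n(\Oabs[[t]])$ gives $D_0 \in \GL_n(\Oabs)$), the equation $D_0 Y_0^q = Y_0$ is solved by $Y_0 = \id$ (the identity matrix), which lies in $\Mn(\Oabs)$ and satisfies $\|Y_0\| = 1 = \delta^0$; this also guarantees $Y \in \GL_n$ since $Y_0$ is invertible. Next, inductively assume $Y_0, \dots, Y_{l-1} \in \Mn(\Oabs)$ with $\|Y_j\| \le \delta^j$ have been found. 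Then $\|B_l\| = \|\sum_{m=1}^l D_m Y_{l-m}^q\| \le \max_{1 \le m \le l} \|D_m\|\,\|Y_{l-m}\|^q \le \max_{1\le m\le l} \delta^m \delta^{q(l-m)} \le \delta^l$ (using $q \ge 1$, so $\delta^{m+q(l-m)} \le \delta^{m + (l-m)} = \delta^l$). I apply Corollary \ref{Henselqadd} with $A = D_0$, $B = B_l$, and trial solution $Y' = 0$: the residual is $\|A \cdot 0 - 0 + B_l\| = \|B_l\| \le \delta^l < 1$, so there is a unique $Y_l \in \Mn(\Oabs)$ with $D_0 Y_l^q - Y_l + B_l = 0$ and $\|Y_l\| = \|B_l\| \le \delta^l$, closing the induction.

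Finally I would assemble $Y = \sum_l Y_l t^l$. The bound $\|Y_l\| \le \delta^l \to 0$ shows $Y \in \Mn(\Oabs[[t]])$ and in fact each entry of $Y$ lies in $K\{t\}$, hence $Y \in \Mn(L)$; since $Y_0 = \id$ is invertible, $\det(Y)$ is a unit in $\Oabs[[t]]$ (its constant term is $1$), so $Y \in \GL_n(\Oabs[[t]]) \subseteq \GL_n(L)$. By construction the degree-$l$ identities hold for every $l$, so $D\phi_q(Y) = Y$ as an identity in $\Mn(K[[t]])$, and a fortiori in $\GL_n(L)$. The main obstacle — really the only delicate point — is bookkeeping the norm estimate on $B_l$ correctly, making sure the $q$-th powers coming from $\phi_q$ improve rather than destroy the geometric decay; since $\delta < 1$ and $q \ge 1$, raising to the $q$-th power only helps, so the estimate goes through. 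A secondary point to state carefully is why $Y \in L$: this is exactly the defining condition of $K\{t\}$ (coefficients tending to $0$ in $|\cdot|$), which $\|Y_l\| \le \delta^l$ supplies, and $L = \Quot(K\{t\})$ by the notation of Section \ref{sec3}.
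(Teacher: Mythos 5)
Your induction for $l \ge 1$ is essentially the paper's argument: the degree-$l$ comparison gives $D_0 Y_l^q - Y_l + B_l = 0$ with $B_l$ depending only on $Y_0,\dots,Y_{l-1}$, the estimate $\|B_l\| \le \delta^l$ is correct, and applying Corollary~\ref{Henselqadd} (you use trial solution $Y'=0$; the paper uses $\theta^l I_n$, both work) gives $\|Y_l\| \le \delta^l$.

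The genuine gap is the base case. You assert that $D_0 Y_0^q = Y_0$ is solved by $Y_0 = I_n$, but substituting $Y_0 = I_n$ gives $D_0 I_n = I_n$, i.e.\ $D_0 = I_n$ — which is false for a general $D \in \GL_n(\Oabs[[t]])$. The degree-zero equation $D_0 \phi_q(Y_0) = Y_0$ is a Lang-type equation and cannot be solved by inspection; you need $K$ algebraically closed and the Lang isogeny to get some $Y_0 \in \GL_n(K)$ with $D_0 = Y_0 \phi_q(Y_0)^{-1}$. Having done so, it is also not automatic that $Y_0 \in \GL_n(\Oabs)$: the paper deduces this from $Y_0^q = D_0^{-1} Y_0$, which makes $\Oabs[(Y_0)_{ij}]$ a finite $\Oabs$-module, so the entries of $Y_0$ are integral over $\Oabs$ and hence lie in $\Oabs$ (as $\Oabs$ is integrally closed in $K$), and similarly $\det(D_0)\det(Y_0)^q = \det(Y_0)$ forces $\det(Y_0)\in\Oabs^\times$. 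Without this step your construction starts from the wrong $Y_0$ and the claimed identity $D\phi_q(Y)=Y$ fails already in degree zero.
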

\begin{proof}
Observe that $D\phi_q(Y)=Y$ is equivalent to 
\[D_0Y_l^q+D_1Y_{l-1}^q+\dots+D_lY_0^q=Y_l \ \ \mbox{ for all } l \in \N. \]
We define $(Y_l)_{l\geq 0}$ inductively. For $l=0$, we need to solve $D_0\phi_q(Y_0)=Y_0$. The Lang isogeny (see \cite[V.16.4]{Borel}) asserts that such a $Y_0$ exists inside $\GL_n(K)$, as $K$ is algebraically closed. Then $Y_0^q=D_0^{-1}Y_0$ holds, hence $\Ocal_{|\cdot|}[(Y_0)_{ij} \ | \ 1 \leq i,j \leq n ]$ is finitely generated as an $\Ocal_{|\cdot|}$-module, since $D_0 \in \GL_n(\Ocal_{|\cdot|})$. Therefore, all entries of $Y_0$ are integral over $\Ocal_{|\cdot|}$ and as $\Ocal_{|\cdot|}$ is integrally closed inside $K$, we conclude that $||Y_0||\leq 1=\delta^0$ holds. On the other hand, $D_0\phi_q(Y_0)=Y_0$ implies $\det(D_0)\det(Y_0)^q=\det(Y_0)$, hence $\det(Y_0)^{-1}$ is integral over $\Oabs$ which implies $\det(Y_0) \in \Oabs^{\times}$ and therefore $Y_0 \in \GL_n(\Oabs)$.\\
\\
Now suppose that $Y_0,\dots,Y_{l-1}$ have been chosen such that for all $1 \leq i \leq l-1$, $||Y_i|| \leq \delta^i$ and 
$D_0Y_i^q+D_1Y_{i-1}^q+\dots+D_iY_0^q=Y_i$ holds. We claim that we can find $Y_l \in \Mn(\Ocal_{|\cdot|})$ such that $D_0Y_l^q+D_1Y_{l-1}^q+\dots+D_lY_0^q=Y_l$ and $||Y_l|| \leq \delta^l$. Set $A:=D_0 \in \GL_n(\Ocal_{|\cdot|})$ and $B:=D_1Y_{l-1}^q+\dots+D_lY_0^q \in \Mn(\Oabs).$ We have to find a solution to the polynomial system of equations 
\[AY^q-Y+B=0.\] 
We have 
\begin{eqnarray*} 
||B||&=&||D_1Y_{l-1}^q+\dots+D_lY_0^q|| \\
&\leq& \max\{||D_iY_{l-i}^q|| \ | \ 1 \leq i \leq l \} \\
&\leq& \max \{||D_i||\cdot ||Y_{l-i}||^q \ | \ 1 \leq i \leq l \} \\
&\leq& \max \{\delta^i\cdot \delta^{(l-i)q} \ | \ 1 \leq i \leq l \} \\
&\leq& \delta^l, 
\end{eqnarray*} where we used that the maximum norm $||\cdot||$ coming from a non-archimedian absolute value is sub-multiplicative with respect to the matrix multiplication. 
Let $\theta \in \Ocal_{|\cdot|}$ be an element such that $|\theta| \leq \delta$ and set 
$Y_l'=\theta^l \cdot I_n$, where $I_n$ denotes the identity matrix. Then we have 
\[ ||A(Y_l')^q-Y_l'+B||\leq \max\{||A||\cdot||Y_l'||^q, ||Y_l'||, ||B||\}\leq \delta^l<1.\] Hence by Corollary \ref{Henselqadd}, there exists an element $Y_l \in \Mn(\Ocal_{|\cdot|})$ such that $AY_l^q-Y+B=0$ and $||Y_l-Y_l'||=||A(Y_l')^q-Y_l'+B||\leq \delta^l$. As $||Y_l'|| \leq \delta^l$, we conclude $||Y_l|| \leq \delta^l$. \\
\\
The resulting matrix $Y=\sum_{l=0}^{\infty}Y_l t^l \in \Mn(K\{t\})\subseteq \Mn(L)$ satisfies \\ $D\phi_q(Y)=Y$ and $||Y_l|| \leq \delta^l$ for all $l \in \N$. In particular, $Y \in \Mn(\Oabs[[t]])$ and we have seen above that $Y_0 \in \GL_n(\Oabs)$, hence $Y \in \GL_n(\Oabs[[t]])$.
\end{proof}

\subsection{An Upper Bound Theorem}\label{UB}

Let $F$ be a difference field with field of constants $C_F$ and let $\G$ be a connected linear algebraic group defined over $C_F$. For algebraically closed fields of constants it is well known that the Galois group of a difference module is contained in $\G$ if its representing matrix is contained in $\G(F)$ (see for example \cite[Prop. 1.31]{vdPutSinger}). In our setup of difference fields with a valuation and fields of constants $\Fq(t)$, we prove such a criterion under certain assumptions (see Theorem \ref{ubthm} below). The strategy is to show that there exists a fundamental matrix contained in $\G$ if there exists one in $\GL_n$. This implies that the Galois group scheme is contained in $\G$ (see Prop. \ref{closedemb}).\\
\\
The Chevalley Theorem \ref{chevalley} has played an important role in solving the inverse problem in differential Galois theory (with algebraically closed constants). It has been used in characteristic zero (see \cite{MitschiSinger}) as well as in the iterative differential case (see \cite{Matzatskript}). We use it in the proof of both the upper and the lower bound theorem.
\begin{thm}\label{chevalley}(Chevalley, see \cite[Theorem 5.5.3]{Springer}) \\
Let $\G$ be a linear algebraic group over an algebraically closed field $F$ and $\mathcal{H}$ a closed subgroup, both defined over a subfield $F_1\subseteq F$. Then there exists an $m \in \N$ and a closed embedding $\rho \colon \G \rightarrow \GL_m$, which is defined over $F_1$, such that there is a non-zero element $w \in F_1^m$ satisfying
$$ \mathcal{H}(F)= \{ g \in \G(F) \ | \ \rho(g)w \in Fw \}. $$ 
\end{thm}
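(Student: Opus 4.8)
The plan is to deduce the Chevalley embedding theorem for a linear algebraic group $\G$ and a closed subgroup $\Hcal$ from the representation theory of $\G$, following the classical argument (as in Springer's book). The target is to produce a closed embedding $\rho \colon \G \hookrightarrow \GL_m$ defined over $F_1$ together with a vector $w \in F_1^m$ whose stabilizer-of-the-line is exactly $\Hcal$. Since all of $\G$, $\Hcal$ and the desired data are defined over $F_1$, the key point is to carry out the standard construction with everything rational over $F_1$; this is where the descent/rationality bookkeeping lives.

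\textbf{Step 1: Find a defining ideal for $\Hcal$.} Write $\G = \Spec A$ with $A = F_1[\G]$ the coordinate ring, and let $I \ideal A$ be the ideal of $\Hcal$, which is defined over $F_1$ by hypothesis. Since $A$ is a finitely generated $F_1$-algebra, $I$ is finitely generated, say by $f_1, \dots, f_k \in I$. The group $\G$ acts on $A$ by right translation, $\rho_g$; the crucial finiteness input is that this action is locally finite, so the $F_1$-subspace $V_0 \subseteq A$ spanned by the $\G$-translates of $f_1, \dots, f_k$ is finite-dimensional over $F_1$ and $\G$-stable, and is itself defined over $F_1$ (one checks the translates of $F_1$-rational functions remain $F_1$-rational because the comultiplication $A \to A \otimes_{F_1} A$ is defined over $F_1$).

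\textbf{Step 2: Pass to a single line via an exterior power.} Let $W = V_0 \cap I$, a $\G$-stable subspace of $V_0$ (because $I$ is $\Hcal$-stable, but here one needs: $g$ maps $I$ into $I$ iff $g \in \Hcal$, which is the content of $I$ being the ideal of $\Hcal$). Set $d = \dim_{F_1} W$ and let $m' = \dim_{F_1} V_0$, and consider the induced action of $\G$ on $\bigwedge^d V_0$; the line $\ell = \bigwedge^d W \subseteq \bigwedge^d V_0$ is a one-dimensional $F_1$-subspace, and the standard argument shows $\Hcal(F) = \{g \in \G(F) \mid g \cdot \ell = \ell\}$. Choose $0 \neq w \in \ell$; since $\ell$ is defined over $F_1$ we may take $w \in F_1^{m}$ where $m = \binom{m'}{d}$ after fixing an $F_1$-basis of $\bigwedge^d V_0$.

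\textbf{Step 3: Promote the representation to a closed embedding.} The representation $\G \to \GL(V_0 \oplus (\text{something faithful}))$ may fail to be faithful, so as usual one takes the direct sum of $V_0$ (carrying $w$ in an exterior power) with a chosen faithful representation $\G \hookrightarrow \GL_N$ over $F_1$ — which exists since $\G$ is a linear algebraic group over $F_1$ — and embeds $\G$ diagonally; enlarging $w$ by a zero block in the extra coordinates preserves the stabilizer condition. This gives the desired $\rho$ over $F_1$, closed because it contains a closed embedding as a summand. \textbf{The main obstacle} I anticipate is not any single deep fact but the rationality bookkeeping: one must verify at each stage (translates of $f_i$, the subspace $W$, the exterior power, the vector $w$) that ``defined over $F_1$'' is preserved, which ultimately rests on all structure morphisms of $\G$ and the closed subscheme $\Hcal$ being $F_1$-morphisms, together with flat base change $(-)\otimes_{F_1} F$ commuting with all the linear-algebra operations involved. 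Since the statement is quoted verbatim from \cite[Theorem 5.5.3]{Springer}, in the paper one would simply cite it; the sketch above indicates why the $F_1$-rationality in the statement is exactly what that reference provides.
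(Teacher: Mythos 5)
Your sketch is correct and is precisely the classical Chevalley construction that the paper simply cites from \cite[Theorem 5.5.3]{Springer}; in particular your Step 3 (taking the direct sum with a faithful $F_1$-representation so that $\rho$ becomes a closed embedding) is exactly the paper's own one-sentence remark appended after the theorem statement. One minor slip: $W = V_0 \cap I$ is $\Hcal$-stable, not ``$\G$-stable'' as you first write — the whole point, as your parenthetical then correctly explains, is that the stabilizer of $W$ in $\G$ is exactly $\Hcal$.
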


\noindent Note that the rational representation given in \cite{Springer} might not be a closed embedding itself, but it can be turned into one by taking the direct sum with an arbitrary closed embedding defined over $F_1$.

\begin{Def}
Assume that $\Oabs/\mfrak$ embeds into $K$. Then we can extend the canonical homomorphism $\kappa_{|\cdot|} \colon \Oabs \rightarrow \Oabs/\mfrak$ to a ring homomorphism 
\[\kappa_{|\cdot|} \colon \Oabs[[t]] \rightarrow (\Oabs/\mfrak)[[t]] \rightarrow K[[t]], \] by 
setting $\kappa_{|\cdot|}(\sum_{i=0}^{\infty}a_it^i)=\sum_{i=0}^{\infty}\kappa_{|\cdot|}(a_i)t^i$ for any $a_i \in \Oabs$. Note that $\kappa_{|\cdot|}$ commutes with the action of $\phi_q$ on $K[[t]]$.
\end{Def}

In Section \ref{EX} we constructed fundamental matrices $Y \in \GL_n(L)\cap \Mn(K\{t\})$.  
We will eventually need $Y$ to be contained in $\G(K[[t]])$. Of course we still want to stay inside $L$ (to ensure that we have no new constants) so we are looking for fundamental solution matrices contained in $\G(L\cap K[[t]])$. Note that $L\cap K[[t]]=\{\frac{f}{g} \ | \ f, g \in K\{t\}, t\nmid g \}\supsetneq K\{t\}$, for instance $(1-t)^{-1}$ is contained in $L\cap K[[t]]$ but $(1-t)$ is not invertible inside $K\{t\}$. 

\begin{thm}\label{ubthm}
Assume that $\Oabs/\mfrak$ embeds into $K$.
Let $\G \leq \GL_n$ be a connected linear algebraic group defined over $\Fq$. Let further $D=\sum_{l=0}^{\infty} D_lt^l \in \G(\Oabs[[t]])$ be such that $||D_l||<1$ for all $l>0$. Assume that there exists a matrix $Y \in \GL_n(\Oabs[[t]])\cap \Mn(\Oabs\{t\})$ satisfying $D\phi_q(Y)=Y$. Then there exists a $Y' \in \G(L \cap \Oabs[[t]])$ with $D\phi_q(Y')=Y'$. 
\end{thm}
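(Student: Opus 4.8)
The plan is to start from the given fundamental matrix $Y \in \GL_n(\Oabs[[t]])\cap \Mn(\Oabs\{t\})$ and correct it by a constant (i.e.\ $\phi_q$-fixed) matrix on the right so that it lands in $\G$. Since $D \in \G(\Oabs[[t]])$, any two fundamental matrices differ by right multiplication by a matrix with entries in the constants; here the constants of $L$ are $\Fq(t)$, but it will be cleaner to work over $K[[t]]$ first, where the constants are $\Fq((t))$, and only afterwards descend back into $L$. So I would first reduce modulo $\mfrak$ using $\kappa_{|\cdot|}$: applying $\kappa_{|\cdot|}$ to $D\phi_q(Y)=Y$ and using that $\kappa_{|\cdot|}$ commutes with $\phi_q$, together with $||D_l||<1$ for $l>0$ (so $\kappa_{|\cdot|}(D)=\kappa_{|\cdot|}(D_0)\in\G(K)$ is a constant matrix in $\G$), one gets that $\overline Y:=\kappa_{|\cdot|}(Y)$ is a fundamental matrix for the constant equation $\phi_q(y)=\kappa_{|\cdot|}(D_0)^{-1}y$ over $K$, and in particular $\overline Y \in \GL_n(K)$. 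The point of this reduction is that over the algebraically closed field $K$ with the Frobenius $\phi_q$ one has the Lang isogeny available, and more importantly one can invoke the Chevalley Theorem \ref{chevalley}.

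The key step is then to show that $\overline Y$ (or rather $Y$) can be adjusted into $\G$. Here is where I would use Chevalley's theorem applied to $\mathcal H \leq \G$ over $\Fq$; actually the relevant instance is $\mathcal H = \G$ itself realized as a subgroup of some $\GL_m$ via a faithful representation, or more precisely one wants a representation-theoretic characterization of membership in $\G$. The cleaner route: pick a closed embedding $\rho\colon \GL_n \hookrightarrow \GL_m$ defined over $\Fq$ and a vector $w\in\Fq^m$ with $\G(K)=\{g\in\GL_n(K)\mid \rho(g)w\in Kw\}$. Passing to the $m$-dimensional difference module built from $\rho(D)$, the column $\rho(Y)w$ (interpreted appropriately) spans a line that is respected by the difference structure, and one applies Lemma \ref{eindim_UM} to produce a scalar $\alpha$ in the Picard--Vessiot extension such that $\rho(Y')w$ is a fixed vector, where $Y':=Y\cdot C$ for a suitable constant matrix $C$. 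Unwinding the Chevalley characterization functorially (as in the paragraph following Proposition \ref{closedemb}) gives $Y'\in\G$. The bookkeeping to carry out is: (i) the correcting matrix $C$ lies in $\GL_n$ of the constants, and (ii) after the correction the entries of $Y'$ still lie in $L\cap\Oabs[[t]]$ — i.e.\ that no denominators worse than units of $K\{t\}$, and no coefficients outside $\Oabs$, are introduced.

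The main obstacle I anticipate is exactly point (ii): controlling the integrality and the convergence radius after twisting by the constant matrix $C$. The constants of $L$ are $\Fq(t)$, not $\Fq$, so $C$ may have entries that are rational functions in $t$ with poles; one needs to argue that these poles are harmless, i.e.\ that $C\in\GL_n(\Fq(t)\cap\Oabs[[t]])$ after possibly rescaling, using that both $Y$ and $Y'$ should have bounded (indeed $\leq 1$) coefficient norms coming from Theorem \ref{thmexistenceofsol}-type estimates and from $D\in\G(\Oabs[[t]])$ with $||D_l||<1$. A clean way is to first establish $Y'\in\G(K[[t]])$ by a reduction/lifting argument modulo $\mfrak$ (the constants over $K((t))$ being $\Fq((t))$, where $\G(\Fq((t)))$ is better behaved), then separately that $Y'\in\GL_n(L)$ because $Y'=Y C$ with $Y\in\Mn(K\{t\})$ and $C$ over $\Fq((t))\cap$ (something convergent), and finally intersect: $\G(K[[t]])\cap\GL_n(L)\subseteq\G(L\cap K[[t]])$ since $\G$ is defined over $\Fq\subseteq L\cap K[[t]]$ and being in $\G$ is a closed condition. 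Matching the field of definition of $C$ with the requirement $Y'\in L$ is the delicate balancing act; everything else is a routine unwinding of Chevalley plus Lemma \ref{eindim_UM}.
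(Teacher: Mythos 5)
The high-level strategy you propose --- correct $Y$ by a constant matrix on the right, use Chevalley to recognize when the corrected matrix lies in $\G$ --- is the right one and matches the paper's proof in outline. But several of the specific steps do not hold up.

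First, the claim that $\overline{Y}:=\kappa_{|\cdot|}(Y)\in\GL_n(K)$ is false in general. The hypothesis only gives $Y\in\GL_n(\Oabs[[t]])\cap\Mn(\Oabs\{t\})$, so the reduction $\overline{Y}$ lies in $\GL_n(K[[t]])\cap\Mn(K[t])$, i.e.\ it is a matrix over $K(t)$, not a constant matrix. (Even though $\tilde D=\kappa_{|\cdot|}(D)$ is constant, the equation $\tilde D\phi_q(\tilde Y)=\tilde Y$ has non-constant solutions, since the $\phi_q$-constants of $K(t)$ are $\Fq(t)$, not $\Fq$.) The paper explicitly notes $\tilde Y\in\GL_n(K(t))$ and does \emph{not} try to argue $\tilde Y$ is constant. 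Instead, it invokes the Lang isogeny to produce a genuinely constant solution $X\in\G(K)$ of the reduced equation and then sets $C:=\tilde Y^{-1}X\in\GL_n(\Fq(t))$; this is the step that actually produces the correcting matrix, and it is missing from your proposal.

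Second, the route to $Y'\in\G$ via Lemma \ref{eindim_UM} is circular. That lemma requires the Galois group scheme $\Hcal$ of the $m$-dimensional module to stabilize the line $\Fq(t)\cdot w$; but $w$ spans an $\Hcal$-stable line precisely when $\Hcal\leq\G$, which is the containment the upper bound theorem exists to prove. The passage after Proposition \ref{closedemb} that you cite is the \emph{use} of the upper bound (``if some fundamental matrix lies in $\tilde\G$, then $\Hcal\leq\tilde\G$''), not a tool for constructing such a matrix. The paper avoids this circularity by exploiting only that $D\in\G$: from Chevalley, $\rho(D)w=\lambda w$ with $\lambda\in\Oabs[[t]]^\times$, so $v':=\rho(Y'^{-1})w$ satisfies the scalar equation $\phi_q(v')=\lambda v'$; solving $\phi_q(\mu)\mu^{-1}=\lambda$ gives $v=\mu^{-1}v'\in\Fq[[t]]^m$, and reducing modulo $\mfrak$ identifies $v$ with $\tilde\mu^{-1}\rho(X^{-1})w\in Kw$, forcing $\rho(Y'^{-1})w\in K[[t]]w$ and hence $Y'\in\G(\overline{K((t))})$.

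Third, the bookkeeping at the end is the part you flag as ``delicate'' but leave unresolved. It is handled cleanly once one has the right $C$: from $C=\tilde Y^{-1}X$ with $\tilde Y\in\GL_n(K[[t]])$ and $X\in\GL_n(K)$ one gets $C\in\GL_n(\Fq(t))\cap\GL_n(K[[t]])\subseteq\GL_n(\Fq[[t]])\subseteq\GL_n(\Oabs[[t]])$, which gives both $Y'\in\GL_n(\Oabs[[t]])$ and $Y'\in\GL_n(L)$ immediately. As written, your ``first establish $Y'\in\G(K[[t]])$ by a reduction/lifting argument'' is not a proof, and this is in fact where the real work of the theorem lives.
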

\begin{proof}
For any matrix $A \in \Mn(\Oabs[[t]])$, we set $\tilde A:=\kappa_{|\cdot|}(A)$ and similarly for vectors over $\Ocal[[t]]$ and scalars in $\Ocal[[t]]$. \\
\\
By assumption, we have $\tilde D \in \G(K)$, i.e. no $t$ appears. As $K$ is algebraically closed, the Lang isogeny (see \cite[V.16.4]{Borel}) asserts that there exists an $X \in \G(K)$ satisfying $\tilde D\phi_q(X)=X$. Now $Y$ is contained in $\GL_n(\Oabs[[t]]) \cap \Mn(\Oabs\{t\})$, hence $\tilde Y \in \GL_n(K[[t]])\cap \Mn(K[t]) \subseteq \GL_n(K(t))$. As $\kappa_{|\cdot|}$ and $\phi_q$ commute, we have $\tilde D \phi_q(\tilde Y)=\tilde Y$. Then $C:=\tilde Y^{-1}X$ is contained in $\GL_n(C_{K(t)})=\GL_n(\Fq(t))$.\\
\\
We set $Y':=YC$. Clearly, $D\phi_q(Y')=Y'$ holds since $C$ has constant entries. We claim that $Y'$ is contained in $\G(L\cap \Oabs[[t]])$. First of all, $Y$ has entries in $\Oabs\{t\} \subseteq L$ and $C$ has entries in $\Fq(t)\subseteq L$, hence $YC \in \GL_n(L)$. Also, $\tilde Y \in \GL_n(K[[t]])$ and $X \in \GL_n(K)$, hence $C=\tilde Y ^{-1} X \in \GL_n(K[[t]])$. We conclude $C \in \GL_n(\Fq(t))\cap \GL_n(K[[t]]) \subseteq \GL_n(\Fq[[t]]) \subseteq \GL_n(\Oabs[[t]])$, thus $Y'=YC$ is also contained in $\GL_n(\Oabs[[t]])$. Therefore, it suffices to show that $Y':=YC$ is contained in $\G(\overline{K((t))})$.\\
\\
By the Chevalley Theorem \ref{chevalley}, there exists a closed embedding $\rho \colon \GL_n \rightarrow \GL_m$ defined over $\Fq$ and a non-zero element $w \in \Fq ^{\, m}$ such that 
\begin{equation}\label{glgub1}
\G(\overline{K((t))})=\{g \in \GL_n(\overline{K((t))}) \ | \  \rho(g)w \in \overline{K((t))}\cdot w \}.
\end{equation} By multiplying $w$ by a suitable element in $\Fq^{\, \times}$, we may assume that there exists a $j \leq m$ such that $w_j=1$. \\
Note that $\rho$ commutes with both $\phi_q$ and $\kappa_{|\cdot|}$, as these both act trivially on $\Fq$. Also note that whenever a matrix $A$ is contained in $\GL_n(\Oabs[[t]])$, $\rho(A)$ will be contained in $\GL_m(\Oabs[[t]])$, as $\rho$ is defined over $\Fq \subseteq \Oabs$, hence both $\rho(A)$ and $\rho(A^{-1})$ have entries in $\Oabs[[t]]$. \\
\\
We will show that there exist $v \in \Fq[[t]]^m$ and $\mu \in \Oabs[[t]]^{\times}$ such that 
\begin{equation} \label{glgub2}
\rho(Y'^{-1})w=\mu v 
\end{equation} holds. If this is true, we will have 
\begin{eqnarray*}
\mu^{-1}\rho(Y'^{-1})w&=&v=\kappa_{|\cdot|}(v) =\kappa_{|\cdot|}(\mu^{-1}\rho(Y'^{-1})w) =\tilde \mu^{-1} \rho(\tilde Y'^{-1})\tilde w \\
&=&\tilde \mu^{-1} \rho(\tilde C^{-1} \tilde Y^{-1})w =\tilde \mu^{-1} \rho(C^{-1}\tilde Y^{-1})w =\tilde \mu^{-1} \rho(X^{-1})w,
\end{eqnarray*} where we repeatedly used that $\kappa_{|\cdot|}$ acts trivially on $\Fq[[t]]$. Now $X^{-1} \in \G(K)$, hence $\rho(X^{-1})w \in Kw$ by (\ref{glgub1}). Also, $\tilde \mu \in K[[t]]^{\times}$ (as $\mu \in \Oabs[[t]]^{\times}$) so we conclude
\[\rho(Y'^{-1})w=\mu \tilde \mu^{-1} \rho(X^{-1})w \in K[[t]]w \] which implies that $(Y')^{-1}$ and hence $Y'$ is contained in $\G(\overline{K((t))})$ (see (\ref{glgub1})).\\
\\
It remains to show that there exist $v \in \Fq[[t]]^m$ and $\mu \in \Oabs[[t]]^{\times}$ satisfying Equation (\ref{glgub2}).
First note that as $D \in \G(\Oabs[[t]])\subseteq \G(\overline{K((t))})$, Equation (\ref{glgub1}) implies that there exists a $\lambda \in \overline{K((t))}$ satisfying
\[ \rho(D)w=\lambda w. \] We have $\rho(D) \in \GL_m(\Oabs[[t]])$, hence $\lambda=\lambda w_j=(\rho(D)w)_j \in \Oabs[[t]]$, as  $w_j=1$ and $w \in \Fq^{\, m} \subseteq \Oabs^m$. Similarly, $\lambda^{-1}=\lambda^{-1}w_j=(\rho(D)^{-1}w)_j \in \Oabs[[t]]$, hence $\lambda$ is contained in $\Oabs[[t]]^{\times}$. We set \\ $v':=\rho(Y'^{-1})w \in \Oabs[[t]]^m$ and compute
\begin{eqnarray}
\phi_q(v')&=&\phi_q(\rho(Y'^{-1})w) =\phi_q(\rho(Y'^{-1}))w 
= \rho(\phi_q(Y'^{-1}))w \nonumber \\
&=& \rho(Y'^{-1}D)w =\rho(Y'^{-1})\rho(D)w = \lambda v' .\label{glgub3}
\end{eqnarray}
We can fix a $\mu \in \Oabs[[t]]^{\times}$ satisfying $\phi_q(\mu)\mu^{-1}=\lambda$. We define $v:=\mu^{-1}v'=\mu^{-1}\rho(Y'^{-1})w.$ Then $v \in \Oabs[[t]]^m$, and by Equation (\ref{glgub3}), we have \[\phi_q(v)=\phi_q(\mu^{-1})\phi_q(v')=\phi_q(\mu^{-1})\lambda v'=v, \] hence $v \in \Fq[[t]]^m$ and $(v,\mu)$ satisfy Equation (\ref{glgub2}) by definition. 
\end{proof}

\subsection{Lower Bounds}\label{LB}
We develop a lower bound criterion as follows. Let $(M,\Phi)$ be a difference module over ($\Fq(s,t)$, $\phi_q$) with representing matrix $D \in \GL_n(\Fq(s,t))$. Let $\pfrak$ be a place of $\Fq(s)$ of degree $d\geq 1$ such that $D \in \GL_n(\ofrak[t]_{(t)})$, where $\ofrak$ denotes the valuation ring corresponding to $\pfrak$. Then $\Hcal$ contains a certain conjugate of the reduction of $D\phi_q(D)\cdots\phi_q^{d-1}(D)$ modulo $\pfrak$ (see Theorem \ref{uschr}). The criterion developed here actually works in the more general case $k(t)\supset \Fq(t)$ with $k$ a (not necessarily discretely) valued field with finite residue field. \\
\\
 The idea to work with reductions at some places $\pfrak$ to obtain elements of the Galois group up to conjugacy is inspired by finite Galois theory. Every finite Galois extension of $\Fq(s)$ is the Picard-Vessiot ring of a difference module over $(\Fq(s),\phi_q)$ (note that $\phi_q$ restricts to the ordinary Frobenius endomorphism on $\Fq(s)$).  In \cite{Matzat}, Matzat gave a lower bound criterion for these kind of difference modules (so called finite Frobenius modules) using reductions of the representing matrix $D_0 \in \GL_n(\Fq(s))$ from $\Fq(s)$ to $\Fq$ - this can be seen as a ``linear Dedekind criterion''.   

\subsubsection{Setup for Specialization} \label{setupspec}
In addition to the notation established in section \ref{notation}, we will use the following notation in this section. 

\begin{description}
\item[$d$] a fixed number $d \in \N$.
 \item[$(\ofrak,\pfrak)$] a valuation ring $\ofrak$ inside $k$ with maximal ideal $\pfrak$ such that the residue field $\ofrak/\pfrak$ is isomorphic to $\Fqd$. We do not assume $\ofrak$ to be discrete.
\item[$\Gamma$]  the corresponding ordered abelian group $\Gamma=k^{\times}/\ofrak^{\times}$. 
\item[$(\Ocal,\Pcal)$] an extension of $(\ofrak,\pfrak)$ to $\ksep$.
\item[$\Gamma'$]  the corresponding ordered abelian group $\Gamma':=(\ksep)^{\times}/{\Ocal^{\times}}$. 
\item[$\nu$] the corresponding valuation $\nu \colon \ksep \rightarrow \Gamma' \cup \{ \infty \}$. Note that $\nu$ restricts to $\nu \colon k \rightarrow \Gamma \cup \{ \infty \}$.  
\item[$\kappa$] the residue homomorphism $\kappa \colon \Ocal \rightarrow \Fqbar$. (We have $\Ocal/ \Pcal \cong \Fqbar$, as we assumed $\ofrak / \pfrak \cong \Fqd$.) Note that $\kappa$ restricts to $\kappa \colon \ofrak \rightarrow \Fqd$.
\item[$\nu_t$] the Gauss extension $\nu_t \colon k(t) \rightarrow \Gamma \cup \{\infty \}$  of $\nu$, defined by \\ $\nu_t(\sum_{i=0}^{r}a_it^i)=\min\{\nu(a_i) \ | \ 0 \leq i \leq r \}$ for $a_i \in k$ and $r \in \N$ and extended to fractions of polynomials. 
\item[$(\ofrak_t, \pfrak_t)$] the valuation ring $\ofrak_t$ of $\nu_t$ inside $k(t)$ with maximal ideal $\pfrak_t$. The residue class field equals $\ofrak_t/\pfrak_t\cong \Fqd(t)$ (see \cite[Cor. 2.2.2]{EnglerPrestel}). 
\item[$\Ocal((t))$] the ring of formal Laurent series over $\Ocal$: \\ $\Ocal((t)):=\{\sum_{i=r}^{\infty} a_it^i \ | \ r \in \Z, a_i \in \Ocal \}=\Ocal[[t]][t^{-1}]$. 
\item[$\Ocal_t$] the subring of $\ksep((t))$ generated by $\ofrak_t$ and $\Ocal((t))$. Since both $\Ocal((t))$ and $\ofrak_t$ are $\phi_q$-stable inside $\ksep((t))$, $\Ocal_t$ is $\phi_q$-stable, as well. Also, note that $\Fq(t) \subseteq \Ocal((t)) \subseteq \Ocal_t$. 
\item[$(K((t)),\phi_q)$] we define $\phi_q$ on $K((t))$ (and any subring thereof) by setting $\phi_q(\sum_{i=r}^{\infty}a_it^i)=\sum_{i=r}^{\infty}\phi_q(a_i)t^i=\sum_{i=r}^{\infty}a_i^qt^i$ for $r \in \Z$ and $a_i \in K$. This is compatible with the definition on the subfield $L$ of $K((t))$ made in Section \ref{notation}. 
 \end{description}

\begin{rem}
Note that $\ofrak_t$ is not contained in $\Ocal((t))$. Indeed, $\frac{1}{a+t}$ is contained in $\ofrak_t$ but not in $\Ocal((t))$, if $a \in \pfrak=\ofrak \backslash  \ofrak^{\times} $. 
\end{rem}

\begin{ex}
Note that $\ofrak/\pfrak \cong \Fqd$ includes restrictions on $k$ which had been an arbitrary subfield of $K$, before. For instance $k$ cannot equal $\Fqbar(s)$ anymore, since $\Fqbar$ can be embedded into the residue field of any valuation on $\Fqbar(s)$. In our application, $k=\Fq(s)$ with $\pfrak$ a place of degree $d$. However, the results from this chapter could also be applied in more general situations such as $k=\Fq(s_1,\dots s_r)$ with a rank-$r$-valuation $\nu$ and finite residue class field.
\end{ex}

\subsubsection{Specializing Fundamental Matrices}

\begin{lemma}\label{separablesolutions}
Consider a system 
$AY^q-Y+B=0$ of polynomial equations over $\ksep$ for an $A \in \GL_n(\ksep)$ and $B \in \Mn(\ksep)$, where $Y$ denotes a matrix consisting of $n^2$ indeterminates. Let $Y \in K^{n\times n}$ be a solution. Then all entries of $Y$ are contained in $\ksep$.
\end{lemma}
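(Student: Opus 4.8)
The plan is to show first that the entries of $Y$ are algebraic over $\ksep$ (hence lie in $\kbar$), and then to descend from $\kbar$ to $\ksep$ by exploiting that $\kbar/\ksep$ is purely inseparable together with the observation that, after applying the $q$-power map a few times, the defining relation becomes affine-linear with coefficients in $\ksep$.

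The first step is to rewrite $AY^q-Y+B=0$ as $Y^q = A^{-1}Y - A^{-1}B$, which is legitimate since $A\in\GL_n(\ksep)$. Then each entry $Y_{ij}^q$ is an $\ksep$-linear combination of the entries of $Y$ together with $1$, so the $\ksep$-subalgebra of $K$ generated by all the $Y_{ij}$ is spanned as an $\ksep$-vector space by the (finitely many) monomials in the $Y_{ij}$ whose individual exponents are all $<q$; it is therefore finite over $\ksep$, and consequently every $Y_{ij}$ is algebraic over $\ksep$. Since $Y_{ij}\in K$ and $\kbar$ is the algebraic closure of $\ksep$ inside $K$, this gives $Y\in\Mn(\kbar)$.

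The second step uses that the entrywise $q$-th power operation is a ring endomorphism of $\Mn(\cdot)$ in characteristic $p$, i.e. $(UV)^q=U^qV^q$ and $(U+V)^q=U^q+V^q$. Iterating $Y^q=A^{-1}Y-A^{-1}B$ then yields, for every $m\geq 1$, an identity $Y^{q^m}=A^{(m)}Y+B^{(m)}$ in which $B^{(m)}\in\Mn(\ksep)$ and $A^{(m)}\in\GL_n(\ksep)$ is a product of entrywise $q^{j}$-powers of $A^{-1}$. Since $\ksep$ is separably closed, $\kbar/\ksep$ is purely inseparable, so there is a single $e$ with $Y_{ij}^{p^{e}}\in\ksep$ for all $i,j$; writing $q=p^{a}$ and choosing $m$ with $am\geq e$, all entries of $Y^{q^m}$ lie in $\ksep$ (which is closed under $p$-th powers). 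Hence $A^{(m)}Y=Y^{q^m}-B^{(m)}\in\Mn(\ksep)$, and multiplying on the left by $(A^{(m)})^{-1}\in\GL_n(\ksep)$ gives $Y\in\Mn(\ksep)$, as claimed.

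I do not expect a serious obstacle here: the only points that need care are checking in the first step that the generated algebra is genuinely module-finite over $\ksep$ (invertibility of $A$ is exactly what makes this work, by reducing high powers), and recording in the second step that $U\mapsto U^{q}$ commutes with matrix multiplication and addition — a consequence of the additivity of the $p$-power map in characteristic $p$. Note that neither the valuation nor the completeness of $K$ enters, beyond the identification of $\kbar$ with the algebraic closure of $\ksep$ inside $K$. One could also repackage the argument by observing that the solution set of $AY^q-Y+B=0$ is the set of $\kbar$-points of a finite $\ksep$-scheme whose Jacobian matrix is $-\mathrm{I}$, hence which is \'etale over the separably closed field $\ksep$ and therefore a finite disjoint union of $\ksep$-rational points; but the elementary descent above avoids invoking \'etaleness.
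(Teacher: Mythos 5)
Your proof is correct, and it takes a genuinely different and more elementary route than the paper. The paper's own proof is a one-line appeal to the Jacobian criterion (the cited Proposition VIII.5.3 in Lang): because $\partial\bigl((AY^q-Y+B)_{rs}\bigr)/\partial Y_{ij} = -\delta_{(i,j),(r,s)}$ in characteristic $p$, the Jacobian of the defining system is $-I$ everywhere, so the (finite) $\ksep$-scheme it cuts out is {\'e}tale over $\ksep$; as $\ksep$ is separably closed, its $K$-points are already $\ksep$-rational. That is exactly the ``repackaging'' you flag at the end. Your argument reaches the same conclusion without any scheme-theoretic input: the rewriting $Y^q = A^{-1}Y - A^{-1}B$ forces $\ksep[Y_{ij}]$ to be module-finite over $\ksep$ (so $Y\in\Mn(\kbar)$), and the fact that entrywise $q$-powering is a ring homomorphism on $\Mn$ in characteristic $p$ lets you iterate to $Y^{q^m}=A^{(m)}Y+B^{(m)}$ with $A^{(m)}\in\GL_n(\ksep)$, $B^{(m)}\in\Mn(\ksep)$, and then peel off the purely inseparable part of $\kbar/\ksep$ explicitly. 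What the paper's route buys is brevity and the recognition that the statement is a standard instance of {\'e}tale descent over a separably closed field; what yours buys is self-containedness and an explicit, computable descent, which is arguably easier to verify in isolation. One small wording note: ``ring endomorphism of $\Mn(\cdot)$'' should be understood as a (non-$K$-linear) ring homomorphism $\Mn(K)\to\Mn(K)$, which is precisely the property you use and which your Frobenius-additivity justification correctly establishes.
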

\begin{proof}
This follows directly from the Jacobian criterion Proposition VIII.5.3. in \cite[Part II]{Lang}.
\end{proof}

\begin{prop}\label{Spezialisierungsprop}
Let $D \in \GL_n(\ofrak[[t]])$ and $Y \in \GL_n(K[[t]])$ be such that $D\phi_q(Y)=Y$. Then $Y$ is contained in $\GL_n(\Ocal[[t]])$.
\end{prop}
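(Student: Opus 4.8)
The plan is to prove the statement by induction on the coefficient index $l$, solving for the coefficients $Y_l$ of $Y = \sum_{l \geq 0} Y_l t^l$ one at a time and showing at each stage that $Y_l \in \Mn(\Ocal)$. Writing $D = \sum_{l \geq 0} D_l t^l$ with $D_l \in \Mn(\ofrak)$ and $D_0 \in \GL_n(\ofrak)$, the equation $D\phi_q(Y) = Y$ is equivalent to the recursion $D_0 Y_l^q + D_1 Y_{l-1}^q + \dots + D_l Y_0^q = Y_l$ for all $l \in \N$, exactly as in the proof of Theorem \ref{thmexistenceofsol}. The base case $l = 0$ asks that $D_0 \phi_q(Y_0) = Y_0$ with $Y_0 \in \GL_n(K)$ given; since $Y_0^q = D_0^{-1} Y_0$ and $D_0^{-1} \in \GL_n(\ofrak) \subseteq \GL_n(\Ocal)$, every entry of $Y_0$ is integral over $\Ocal$, and as $\Ocal$ is integrally closed in $\ksep$ (and the entries of $Y_0$ lie in $\ksep$ by the argument below), we get $Y_0 \in \Mn(\Ocal)$; the determinant argument from Theorem \ref{thmexistenceofsol} then upgrades this to $Y_0 \in \GL_n(\Ocal)$.

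For the inductive step, suppose $Y_0, \dots, Y_{l-1} \in \Mn(\Ocal)$ have been determined. Set $A := D_0 \in \GL_n(\ofrak)$ and $B := D_1 Y_{l-1}^q + \dots + D_l Y_0^q$, which lies in $\Mn(\Ocal)$ since each $D_i \in \Mn(\ofrak) \subseteq \Mn(\Ocal)$ and each $Y_{l-i} \in \Mn(\Ocal)$. Then $Y_l$ is the unique solution in $\Mn(K)$ of the polynomial system $A Y^q - Y + B = 0$, because $Y_l$ is uniquely determined by $Y_l = A^{-1}(Y_l - B) \cdot$ (rearranging, $Y_l = A Y_l^q + B$ has at most one solution as $Y \mapsto AY^q + B$ reduces mod any prime of $K$ lying over $\mfrak$ to a map with invertible differential — more simply, the recursion $D_0 Y_l^q = Y_l - B$ together with $\phi_q$ injective on $K$ forces uniqueness). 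The key point is that this system has coefficients in $\ksep$ (indeed in $\ofrak \cup \Ocal$), so by Lemma \ref{separablesolutions} all entries of $Y_l$ lie in $\ksep$. Now $Y_l^q = A^{-1}(Y_l - B)$ shows each entry of $Y_l$ is integral over the subring generated by $\Ocal$ and the entries of $A^{-1}, B$, all of which lie in $\Ocal$; hence the entries of $Y_l$ are integral over $\Ocal$, and since $\Ocal$ is integrally closed in $\ksep$, we conclude $Y_l \in \Mn(\Ocal)$.

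Assembling the coefficients gives $Y \in \Mn(\Ocal[[t]])$. To finish, one checks $Y \in \GL_n(\Ocal[[t]])$: from $D\phi_q(Y) = Y$ we get $\det(D)\phi_q(\det Y) = \det Y$, so with $Z := \det Y \in \Ocal[[t]]$ we have $\det(D_0)\phi_q(Z_0) = Z_0$ at the level of constant terms, whence $Z_0^{-1}$ is integral over $\Ocal$ and therefore $Z_0 \in \Ocal^\times$; since $Z_0$ is the constant term of $Z$, this makes $Z = \det Y$ a unit in $\Ocal[[t]]$, so $Y \in \GL_n(\Ocal[[t]])$.

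The main obstacle I anticipate is the separability step: one must be careful that Lemma \ref{separablesolutions} applies, i.e. that the entries of $Y_l$ really do land in $\ksep$ rather than merely in $\Fqbar$. This is why the lemma is invoked — the system $AY^q - Y + B = 0$ has separable-looking shape (its Jacobian is $\pm$ the identity, as computed in Corollary \ref{Henselqadd}), so the Jacobian criterion guarantees the solution is separable over $\ksep$. Once separability is in hand, the ``integral $+$ integrally closed $\Rightarrow$ in $\Ocal$'' mechanism is routine, being a direct transcription of the argument already used in Theorem \ref{thmexistenceofsol} with $\Oabs$ replaced by $\Ocal$. One minor subtlety worth flagging explicitly is the uniqueness of $Y_l$ in $\Mn(K)$, which is what lets us identify the solution produced abstractly with the one sitting inside the given $Y \in \GL_n(K[[t]])$; this follows since $\phi_q$ is injective on $K$, so $D_0 Y_l^q = Y_l - B$ has a unique solution for each fixed right-hand side determined by the earlier coefficients.
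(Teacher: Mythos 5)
Your proof is correct and follows essentially the same route as the paper: the same coefficient recursion $D_0Y_l^q + D_1Y_{l-1}^q + \dots + D_lY_0^q = Y_l$, the same appeal to Lemma \ref{separablesolutions} to place each $Y_l$ in $\Mn(\ksep)$, the same ``$\Ocal[(Y_l)_{i,j}]$ is a finitely generated $\Ocal$-module, so the entries are integral over $\Ocal$ and hence lie in $\Ocal$'' mechanism borrowed from Theorem \ref{thmexistenceofsol}, and the same determinant argument to upgrade $\Mn(\Ocal[[t]])$ to $\GL_n(\Ocal[[t]])$.

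One correction to the side remark you flag as a subtlety: the uniqueness of $Y_l$ in $\Mn(K)$ as a solution of $D_0 Y^q - Y + B = 0$ is false, and happily also unnecessary. Over the algebraically closed field $K$ this system has many solutions; for $n=1$, $D_0=1$, $B=0$ it reads $Y^q - Y = 0$ and is solved by every element of $\Fq$, and in general the Lang isogeny produces a whole coset's worth of solutions. Injectivity of $\phi_q$ does not help, since the equation is not of the form $\phi_q(Y)=\phi_q(Y')$. None of this matters to the argument, because the $Y_l$ are the coefficients of the \emph{given} $Y \in \GL_n(K[[t]])$, not an abstractly constructed solution, so Lemma \ref{separablesolutions} applies to them directly without any identification step. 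You should simply drop the uniqueness digression.
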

\begin{proof} We can write $D=\sum_{l=0}^{\infty}D_lt^{l}$ with 
$D_0 \in \GL_n(\ofrak)$ and $D_l \in \Mn(\ofrak)$ for all $l >0$ and $Y=\sum_{l=0}^{\infty}Y_lt^l$ with 
$Y_0 \in \GL_n(K)$ and all $Y_l \in \Mn(K)$ for $l>0$. Recall that $D \phi_q(Y)=Y$ is equivalent to 
\begin{equation}\label{koeffvglglg2}
D_0Y_{l}^q+D_{1}Y_{l-1}^q+\dots+D_{l}Y_0^q=Y_{l} \text{ for all } l \in \N.
\end{equation} 
Inductively, we see that all entries of $Y_l$ are contained in $\ksep$, by Lemma \ref{separablesolutions}. By induction, $Y_l$ then satisfies an equation of the form $Y_l^q=D_0^{-1}Y_l+A$, where $A$ has entries inside $\Ocal$, hence $\Ocal[(Y_l)_{i,j} \ | \ 1 \leq i,j \leq n]$ is finitely generated as an $\Ocal$-module. Therefore, all entries of $Y_l$ are integral over $\Ocal$ and thus contained in $\Ocal$. It follows that $Y$ is contained in $\Mn(\Ocal[[t]]) \cap \GL_n(\ksep[[t]])$ and $D_0Y_0^q=Y_0$ implies $\det(Y_0) \in \Ocal^{\times}$, hence $Y \in \GL_n(\Ocal[[t]])$. 
\end{proof}

\begin{prop}\label{defkappa}
We can extend the residue class homomorphism $\kappa \colon \Ocal \rightarrow \Fqbar$ to a homomorphism $$\kappa \colon \Ocal_t \rightarrow \Fqbar((t))$$ such that the following holds:
\begin{enumerate}
\item $\kappa$ commutes with $\phi_q$. 
\item $\kappa$ restricted to $\Ocal((t))$ equals the coefficient-wise application of the residue map $\Ocal \rightarrow \Ocal/\Pcal \cong \Fqbar$ to a Laurent series over $\Ocal$. 
\item $\kappa$ restricts to the residue map $\ofrak_t \rightarrow \ofrak_t/ \pfrak_t \cong \Fqd(t)$ on $\ofrak_t$.
\end{enumerate}
\end{prop}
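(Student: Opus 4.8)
The plan is to realize $\Ocal_t$ as a localization of $\Ocal((t))$, to define $\kappa$ on $\Ocal((t))$ by coefficient-wise reduction, and to extend it by the universal property of localization; properties (1)--(3) then follow almost formally. The first --- and only non-routine --- step is the structural identity
\[ \Ocal_t \;=\; \Ocal((t))\bigl[\Sigma^{-1}\bigr] \;=\; \{a/g \ | \ a\in\Ocal((t)),\ g\in\Sigma \}, \qquad \Sigma:=\{g\in\ofrak[t] \ | \ \nu_t(g)=0 \}, \]
taken inside $\ksep((t))$; here $\Ocal((t))$ is a domain, so these localizations sit naturally in $\ksep((t))$, and $\Sigma$ is precisely the set of polynomials in $\ofrak[t]$ not lying in $\pfrak_t$, equivalently those having a coefficient in $\ofrak^\times$. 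To see this I would first check the normal form $\ofrak_t=\ofrak[t][\Sigma^{-1}]$: given $f/g\in\ofrak_t$ with $f,g\in k[t]$, pick $c\in k^\times$ with $\nu(c)=\nu_t(g)$ (possible since $\nu_t$ takes values in $\Gamma=\nu(k^\times)$) and replace $f,g$ by $f/c,g/c$ to arrange $f,g\in\ofrak[t]$ and $\nu_t(g)=0$; the reverse inclusion is immediate. Since $\Sigma$ is multiplicative, lies in $\ofrak[t]\subseteq\Ocal[t]\subseteq\Ocal((t))$, and already consists of units of $\ofrak_t$, the subring of $\ksep((t))$ generated by $\ofrak_t$ and $\Ocal((t))$ --- that is, $\Ocal_t$ --- is exactly $\Ocal((t))[\Sigma^{-1}]$.

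On $\Ocal((t))$ define $\kappa$ to be the coefficient-wise residue map $\sum_i a_it^i\mapsto\sum_i\kappa(a_i)t^i$. Since Laurent series have only finitely many negative terms this lands in $\Fqbar((t))$; it is a ring homomorphism restricting to the given $\kappa$ on $\Ocal$, and it commutes with $\phi_q$ because $\kappa(\lambda^q)=\kappa(\lambda)^q$ on $\Ocal$. For $g\in\Sigma$ some coefficient of $g$ is a unit of $\ofrak$, so $\kappa(g)$ is a nonzero element of $\Fqbar[t]$ and hence a unit of the field $\Fqbar((t))$. By the universal property of localization, $\kappa$ extends uniquely to a ring homomorphism $\kappa\colon\Ocal_t=\Ocal((t))[\Sigma^{-1}]\to\Fqbar((t))$, $a/g\mapsto\kappa(a)\kappa(g)^{-1}$.

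It remains to verify the three properties. Property (2) is the definition of $\kappa$ on $\Ocal((t))$. For (1): $\Sigma$ is $\phi_q$-stable and $\phi_q$ commutes with $\kappa$ on $\Ocal((t))$, hence on the localization $\Ocal_t$. For (3): an element of $\ofrak_t$ has the form $f/g$ with $f,g\in\ofrak[t]$ and $g\in\Sigma$, so $\kappa(f/g)=\overline f\,\overline g^{-1}$ in $\Fqbar((t))$, where $\overline{(\cdot)}\colon\ofrak[t]\to\Fqd[t]$ denotes coefficient-wise reduction. Thus $\kappa|_{\ofrak_t}$ and the composite of the canonical isomorphism $\ofrak_t/\pfrak_t\cong\Fqd(t)$ with the natural inclusion $\Fqd(t)\hookrightarrow\Fqbar((t))$ (Laurent expansion at $t=0$) are two ring homomorphisms $\ofrak_t\to\Fqbar((t))$ agreeing on $\ofrak[t]$, hence they coincide.

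I expect the main obstacle to be precisely the structural identity $\Ocal_t=\Ocal((t))[\Sigma^{-1}]$: one has to produce the rescaling normal form for the Gauss valuation ring and also check that inverting $\Sigma$ inside $\Ocal((t))$ already captures all of $\ofrak_t$ --- a point requiring some care, since neither of $\ofrak_t$ and $\Ocal((t))$ contains the other (for instance $(a+t)^{-1}\in\ofrak_t\setminus\Ocal((t))$ whenever $a\in\pfrak\setminus\{0\}$). Everything after that is a routine application of the universal property of localization.
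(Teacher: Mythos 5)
Your proposal is correct and follows essentially the same route as the paper: the paper also establishes the identity $\Ocal_t=\{f/h\mid f\in\Ocal((t)),\ h\in\ofrak[t]\cap\ofrak_t^\times\}$ (which is precisely your $\Ocal((t))[\Sigma^{-1}]$, since $\Sigma=\ofrak[t]\cap\ofrak_t^\times$), defines $\kappa$ on $\Ocal((t))$ coefficient-wise, and extends it to $\Ocal_t$ by $\kappa(f/h)=\kappa(f)/\kappa(h)$. Your write-up is somewhat more explicit about the rescaling argument that yields the normal form of $\ofrak_t$ and about invoking the universal property of localization, which the paper leaves implicit as ``well-defined with all the desired properties,'' but the underlying argument is the same.
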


\begin{proof}
As the residue class homomorphism $\kappa \colon \Ocal \rightarrow \Fqbar$ is a homomorphism, it commutes with $\phi_q$ which is the ordinary Frobenius homomorphism on $\Ocal$. We can extend $\kappa$ to a ring homomorphism $\kappa \colon \Ocal((t)) \rightarrow \Fqbar((t))$ by applying $\kappa$ to the coefficients of the Laurent series over $\Ocal$. Since $\phi_q$ acts on $\Ocal$ coefficient-wise as well, we find that $\kappa$ commutes with $\phi_q$ on $\Ocal((t))$. \\
\\
Let $f$ be contained in $\ofrak_t$. Then $f$ can be written as $\frac{g}{h}$ with $g \in \ofrak[t]$ and $h \in \ofrak[t]\cap \ofrak_t^{\times}$ and the residue map $\ofrak_t \rightarrow \Fqd(t)$ maps $f$ on $\frac{\kappa(g)}{\kappa(h)}$. We have 
\begin{eqnarray*}
 \Ocal_t&=&\{\sum_{i=1}^n f_i g_i \ | \ n \in \N, \ f_i \in \Ocal((t)), \ g_i \in \ofrak_t \} \\
&=&\{\sum_{i=1}^n \frac{f_i}{h_i} \ | \ n \in \N, \ f_i \in \Ocal((t)), \ h_i \in \ofrak[t]\cap \ofrak_t^{\times} \}  \\
&=&\{\frac{f}{h} \ | \  \ f \in \Ocal((t)), \ h \in \ofrak[t]\cap \ofrak_t^{\times} \}.
 \end{eqnarray*}
Setting $\kappa(\frac{f}{h})=\frac{\kappa(f)}{\kappa(h)}$ yields a well-defined homomorphism on $\Ocal_t$ with all the desired properties. 
\end{proof}

\subsubsection{A Lower Bound Theorem}
\begin{thm}\label{uschr}
 Let $\mathcal{G}\leq \GL_n$ be a linear algebraic group defined over $\Fq(t)$. Let $(M, \Phi)$ be an $n$-dimensional $\phi_q$-module over $k(t)$ with representing matrix $D \in \G(k(t) \cap \ofrak[[t]])$. Assume that there exists a fundamental matrix $Y \in \G(K[[t]])$ for $M$ generating a separable Picard-Vessiot extension $E/k(t)$ of $M$. Let $\mathcal H \leq \G$ be the Galois group-scheme of $M$ corresponding to the Picard-Vessiot ring $R:=k(t)[Y,Y^{-1}] \subseteq E$. Then $\mathcal{H}(\Fq[[t]])$ contains a $\G(\Fqbar[[t]])$-conjugate of $\kappa(D\phi_q(D)\dots \phi_{q^{d-1}}(D))$. \\ (More precisely, the conjugating matrix can be chosen as $\kappa(Y) \in \G(\Fqbar[[t]])$).
\end{thm}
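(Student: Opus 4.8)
Rather than constructing the difference automorphism directly, the plan is to write down the matrix it ought to correspond to and to verify that it lies in $\Hcal \leq \GL_n$, the embedding of $\Hcal$ being the one from Proposition \ref{closedemb}, $\sigma \mapsto Y^{-1}\sigma(Y)$; under this embedding ``$g \in \Hcal(\Fq[[t]])$'' means precisely that the matrix $g$ annihilates the ideal cutting out $\Hcal$. Put $\Pi := D\,\phi_q(D)\cdots\phi_q^{d-1}(D)$. I will show that
$$ g := \kappa(Y)^{-1}\,\kappa(\Pi)\,\kappa(Y) $$
does the job, i.e.\ that $g \in \G(\Fq[[t]])$ and $g \in \Hcal(\Fq[[t]])$; this $g$ is a $\G(\Fqbar[[t]])$-conjugate of $\kappa(\Pi)$ with conjugating matrix $\kappa(Y)$, as the theorem asserts.

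\textbf{Step 1 (reduction and $g \in \G(\Fq[[t]])$).} By Proposition \ref{Spezialisierungsprop} the fundamental matrix $Y$ already has entries in $\Ocal[[t]]$; since $Y$ satisfies the equations of $\G$ (whose coefficients lie in $\Fq(t) \subseteq \Ocal_t$), it lies in $\G(\Ocal[[t]]) \subseteq \G(\Ocal_t)$. Applying the homomorphism $\kappa\colon \Ocal_t \to \Fqbar((t))$ of Proposition \ref{defkappa}, which commutes with $\phi_q$, yields $Z := \kappa(Y) \in \G(\Fqbar[[t]])$, $\bar D := \kappa(D) \in \G(\Fqd[[t]])$ and $\kappa(\Pi) = \bar D\,\phi_q(\bar D)\cdots\phi_q^{d-1}(\bar D) \in \G(\Fqd[[t]])$, together with $\bar D\,\phi_q(Z) = Z$. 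Since $\phi_q^d$ fixes $\Fqd$ pointwise it fixes $\bar D$, so $\phi_q(\kappa(\Pi)) = \bar D^{-1}\kappa(\Pi)\,\bar D$; combined with $\phi_q(Z) = \bar D^{-1}Z$ this gives $\phi_q(g) = g$, hence $g \in \G(\Fqbar[[t]]) \cap \GL_n(\Fq[[t]]) = \G(\Fq[[t]])$.

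\textbf{Step 2 (the crux: $g \in \Hcal$).} Apply the Chevalley Theorem \ref{chevalley} to $\Hcal \leq \GL_n$ over $\Fq(t)$ — legitimate because $\Hcal$ is a geometrically reduced linear algebraic group over $\Fq(t)$ by Theorem \ref{representability} and the separability hypothesis — obtaining a homomorphism $\rho\colon \GL_n \to \GL_m$ defined over $\Fq(t)$ and a nonzero $w \in \Fq(t)^m$ such that $\Hcal$ is (functorially) contained in the stabilizer of the line $\Fq(t)\,w$ for the action via $\rho$, with equality on geometric points. Set $\xi := \rho(Y)\,w$. As $\rho$ commutes with $\phi_q$ one gets $\rho(D)\,\phi_q(\xi) = \xi$; and since $\Hcal$ stabilizes the line, $\xi$ is a semi-invariant ($\sigma(\xi) = c_\sigma\,\xi$ for every $\sigma$ in $\Hcal$ over any $\Fq(t)$-algebra), so the ratios $\xi_i/\xi_j$ (for fixed $j$ with $\xi_j \neq 0$) are functorially $\Hcal$-invariant and hence lie in $k(t)$ by Proposition \ref{Invarianten}. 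Thus $\xi = \eta\,u$ with $u \in k(t)^m$; rescaling $u$ by an element of $k(t)^{\times}$ (the Gauss valuation $\nu_t$ is surjective onto the value group) I may assume $u \in \ofrak_t^m$ is \emph{primitive}, i.e.\ some coordinate is a unit of $\ofrak_t$. Then $\eta \in \Ocal_t$, $\kappa(u) \in \Fqd(t)^m$ is nonzero, $\rho(Z)\,w = \kappa(\xi) = \kappa(\eta)\,\kappa(u)$ with $\kappa(\eta)\neq 0$, and — since $\rho(D) \in \GL_m(\ofrak_t)$ and $\ofrak_t$ is $\phi_q$-stable — $\rho(D)\,\phi_q(u) = \beta\,u$ for some $\beta \in \ofrak_t$. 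Iterating this relation gives $\rho(D)\,\phi_q(\rho(D))\cdots\phi_q^{d-1}(\rho(D))\cdot\phi_q^d(u) = \bigl(\textstyle\prod_{i=0}^{d-1}\phi_q^i(\beta)\bigr)u$, and applying $\kappa$ — using $\phi_q^d(\kappa(u)) = \kappa(u)$ (valid since $\kappa(u) \in \Fqd(t)^m$) and $\kappa\bigl(\rho(D)\cdots\phi_q^{d-1}(\rho(D))\bigr) = \rho(\kappa(\Pi))$ — yields $\rho(\kappa(\Pi))\,\kappa(u) = \kappa\bigl(\prod_{i=0}^{d-1}\phi_q^i(\beta)\bigr)\kappa(u)$. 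Hence $\rho(\kappa(\Pi))\,\rho(Z)w$ is a scalar multiple of $\rho(Z)w$, so $\rho(g)\,w = \rho(Z)^{-1}\rho(\kappa(\Pi))\rho(Z)\,w \in \overline{\Fq((t))}\,w$. By Chevalley this gives $g \in \Hcal(\overline{\Fq((t))})$; and since $g$ has entries in $\Fq[[t]]$ while $\Hcal$ is cut out over $\Fq(t) \subseteq \Fq[[t]] \hookrightarrow \overline{\Fq((t))}$, we conclude $g \in \Hcal(\Fq[[t]])$.

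\textbf{Main obstacle.} I expect the delicate point to be the integrality bookkeeping in Step 2: the normalisation of $u$ must simultaneously force $u$, the scalar $\beta$, and the norm-type product $\prod_{i=0}^{d-1}\phi_q^i(\beta)$ into $\ofrak_t$ (so that $\kappa$ applies) while keeping $\kappa(u)\neq 0$ — this is exactly why one works inside $\Ocal_t$ with the extended residue map of Proposition \ref{defkappa} rather than with coefficient-wise reduction alone (note $k(t) \not\subseteq \Ocal((t))$). The accompanying reducedness argument — that a matrix over $\Fq[[t]]$ lies in $\Hcal(\Fq[[t]])$ as soon as it lies in $\Hcal(\overline{\Fq((t))})$ — is routine given the geometric reducedness of $\Hcal$.
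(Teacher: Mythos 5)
Your proof is correct and follows essentially the same route as the paper's: apply Chevalley's theorem over $\Fq(t)$ to realize $\Hcal$ as a line stabilizer, produce a $\phi_q$-semi-invariant vector $\rho(Y)w$ whose projective coordinates lie in $k(t)$ (via Proposition~\ref{Invarianten}, which the paper packages as Lemma~\ref{eindim_UM}), normalize to land in $\ofrak_t$, reduce via $\kappa$ on $\Ocal_t$, and use $\phi_q$-invariance to descend to $\Fq[[t]]$. The only cosmetic deviations are that you apply Chevalley to the pair $(\GL_n,\Hcal)$ rather than $(\G,\Hcal)$ and normalize the semi-invariant vector to be primitive rather than to have a coordinate equal to $1$.
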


\begin{proof} We abbreviate $F:=k(t)$ throughout this proof.\\
By Theorem \ref{representability}, $\Hcal$ is a linear algebraic group and it is a subgroup of $\G$ by Proposition \ref{closedemb}.
 We apply Theorem \ref{chevalley} to $\G$ and get a closed embedding $\rho \colon \G \rightarrow \GL_m$ defined over $\Fq(t)$ and a non-zero $w \in \Fq(t)^m$ such that
\begin{equation}\label{chevkonstr}
\mathcal{H}(S)=\{ g \in \G(S) \ | \ \rho(g)w \in S \cdot w \}
\end{equation} holds for all $\Fq(t)$-algebras $S$.
 We now blow up $M$ to an $m$-dimensional version $\tilde M$ in order to be able to apply Lemma \ref{eindim_UM}. Let $\tilde M$ be an $m$-dimensional difference module over $F$ such that its representing matrix with respect to a fixed basis is given by $\rho(D) \in \GL_m(F)$. Then $\tilde{Y}:=\rho(Y)$ is a fundamental solution matrix for $\tilde M$, since $\rho$ is defined over $\Fq(t)$ and thus $\phi_q(\rho(Y))=\rho(\phi_q(Y))$. All entries of $\tilde Y$ are contained in $R$ and furthermore, the entries of $Y$ are contained in $F[\tilde Y, \tilde{Y}^{-1}]$, since $\rho$ is a closed embedding defined over $\Fq(t) \subseteq F$. Hence $R=F[\tilde Y, \tilde Y^{-1}]$ is also a Picard-Vessiot ring for $\tilde M$ and the Galois group scheme is $\rho(\mathcal{H})\leq \GL_m$ in its natural representation as given in Proposition \ref{closedemb}. By construction, $w$ spans a $\rho(\mathcal{H})$-stable line (see Equation (\ref{chevkonstr})) and thus there exists an $\alpha \in E^{\times}$ such that 
\begin{equation}\label{defalphainuschr}
v=\alpha\tilde Yw
\end{equation}
 is contained in $F^m$ and $N=Fv$ defines a difference submodule of $\tilde M$, by Lemma \ref{eindim_UM}. This means that there exists a $\lambda \in F$ such that $\rho(D)\phi_q(v)=\lambda v$. \\
As $k(t)\cap \ofrak[[t]] \subseteq \ofrak_t$, $D$ is contained in $\GL_n(\ofrak_t)$. Since $\rho$ is defined over $\Fq(t)\subseteq \ofrak_t$, $\rho(D)$ and $\rho(D^{-1})$ both have coefficients in $\ofrak_t$ and thus $\rho(D)$ is contained in $\GL_m(\ofrak_t)$. \\
Now fix an $i \leq m$ such that the $i$-th coordinate $v_i$ of $v$ has minimal valuation among all coordinates of $v$ (with respect to $\nu_t$ and the order on $\Gamma$). After dividing $v$ by $v_i$, we may assume $v_i=1$.
 Thus $\lambda=\lambda\cdot v_i=(\rho(D)\phi_q(v))_i$ is contained in $\ofrak_t$ since $\ofrak_t$ is $\phi_q$-stable. Also, $(\lambda)^{-1}=(\lambda)^{-1}\phi_q(v)_i=(\rho(D^{-1})v)_i$ is contained in $\ofrak_t$, so $\lambda$ is in fact contained in $\ofrak_t^{\times}$. Overall, we got $\rho(D) \in \GL_m(\ofrak_t)$, $v \in \ofrak_t^m$ and $\lambda \in \ofrak_t^{\times}$, hence we may specialize them to $\kappa(\rho(D)) \in \GL_m(\Fqd(t))$, $\kappa(v) \in \Fqd(t)^m$ and $\kappa(\lambda) \in \Fqd(t)^{\times}$, by Proposition \ref{defkappa}.\\
\\ 
 We denote from now on the (coordinate-wise) application of $\kappa$ to a matrix $A$ with entries in $\Ocal_t$ by $\overline{A}$ and similarly for vectors with entries in $\Ocal_t$ and scalars in $\Ocal_t$. Applying $\kappa$ to both sides of $\rho(D)\phi_q(v)=\lambda v$ coordinate-wise yields:
\begin{equation}\label{glgmitrho2}  \overline{\rho(D)}\phi_q(\overline{v})=\overline{\lambda}\cdot \overline{v},
\end{equation}
 where we used that $\kappa$ and $\phi_q$ commute (see Proposition \ref{defkappa}) to get $\overline{\phi_q(v)}=\phi_q(\overline{v})$. 
 Note that $\rho$ commutes with the coordinate-wise application of $\kappa$ to an element in $\G(\Ocal_t)$, since $\rho$ is defined over $\Fq(t)$ and $\kappa$ restricts to the identity on $\Fq(t)$. In particular, $\overline{\rho(D)}=\rho(\overline{D})$ holds and we obtain $\rho(\overline{D})\cdot\phi_q(\overline{v})=\overline{\lambda}\cdot \overline{v}.$
Inductively, we obtain 
\begin{equation}\label{glgmitrho3twist}  \rho(\overline{D})\phi_q(\rho(\overline{D}))\cdots \phi_{d-1}(\rho(\overline{D}))\cdot \overline{v}=\overline{\lambda}\phi_q(\overline{\lambda})\cdots\phi_{q^{d-1}}(\overline{\lambda}) \cdot \overline{v},
\end{equation} where we used $\phi_{q^d}(\overline{v})=\overline{v}$.
Proposition \ref{Spezialisierungsprop} implies that $Y$ has entries in $\Ocal[[t]]$, hence $\rho(Y)$ is contained in $\GL_m(\Ocal((t)))\subseteq \GL_m(\Ocal_t)$, as $\rho$ is defined over $\Fq(t)\subseteq \Ocal((t))$. There exists a $j \leq m$ such that $w_j \in \Fq(t)^{\times} \subseteq \Ocal_t^{\times}$. Then Equation (\ref{defalphainuschr}) implies 
$1=v_i=\alpha\cdot (\rho(Y)\cdot w)_i$ and $(\rho(Y)^{-1}\cdot v)_j=\alpha\cdot w_j$ 
and we deduce that $\alpha$ is contained in $\Ocal_t^{\times}$.
It can thus be specialized to an element $\overline{\alpha}=\kappa(\alpha) \in \Fqbar((t))^{\times}$.
 We may apply $\kappa$ to both sides of Equation (\ref{defalphainuschr}) to get
\begin{equation}\label{glgmitmu1}  \overline{v}=\overline{\alpha} \cdot \overline{\rho(Y)} \cdot \overline{w}=\overline{\alpha} \cdot \rho(\overline{Y}) \cdot w.
\end{equation} (Note that at this point we applied $\kappa$ simultaneously to elements in $\Ocal((t))$ and $\ofrak_t$ which is why we had to construct $\kappa$ on the somewhat peculiar ring $\Ocal_t$ in Proposition \ref{defkappa}.) \\
\\
Abbreviate $\hat D=D\phi_q(D)\cdots \phi_{q^{d-1}}(D)$ and $\hat \lambda=\lambda\phi_q(\lambda)\cdots \phi_{q^{d-1}}(\lambda)$.
Then Equation (\ref{glgmitrho3twist}) translates to 
\begin{equation}\label{glgmitrho4twist}
\rho(\overline{\hat D})\overline{v}=\overline{\hat \lambda}\overline{v}.
\end{equation}
We now consider $\overline{Y}^{-1} \cdot \overline{\hat D} \cdot \overline{Y}=\kappa(Y^{-1}\hat DY)$ which is contained in $\G(\Fqbar[[t]])$, since $\G$ is defined over $\Fq(t)$ and $\kappa$ acts trivially on $\Fq(t)$. We use Equation (\ref{glgmitmu1}) and (\ref{glgmitrho4twist}) to compute
\begin{eqnarray*}
 \rho(\overline{Y}^{-1} \cdot \overline{\hat D} \cdot \overline{Y})\cdot w
&=& \rho(\overline{Y^{-1}})\rho(\overline{\hat D})\rho(\overline{Y})w\\
&=&\overline{\alpha}^{-1} \rho(\overline{Y}^{-1})\rho(\overline{\hat D})\overline{v} \\
&=&\overline{\alpha}^{-1} \overline{\hat \lambda} \rho(\overline{Y}^{-1})\overline{v}\\
&=&\overline{\hat \lambda}\cdot w.
\end{eqnarray*} 
It follows that $\overline{Y}^{-1} \cdot \overline{\hat D} \cdot \overline{Y}$ is contained in $\mathcal{H}(\Fqbar[[t]])$ (see (\ref{chevkonstr})). It remains to show that $\overline{Y}^{-1} \cdot \overline{\hat D} \cdot \overline{Y}$ has entries in $\Fq[[t]]$. To see this, recall that $D\phi_q(Y)=Y$ holds, hence $\overline{D}\phi_q(\overline{Y})=\overline{Y}$ and $\phi_q(\overline{Y})^{-1}=\overline{Y}^{-1} \cdot \overline{D} $. We compute
\begin{eqnarray*}
 \phi_q(\overline{Y}^{-1}  \overline{\hat D}  \overline{Y})
&=& \phi_q(\overline{Y}^{-1})  \phi_q(\overline{D}) \cdots \phi_{q^{d}}(\overline{D}) \phi_q(\overline{Y})\\
&=& \phi_q(\overline{Y}^{-1})  \phi_q(\overline{D}) \cdots \phi_{q^{d-1}}(\overline{D})\overline{D} \phi_q(\overline{Y})\\
&=& \overline{Y}^{-1}\overline{D}  \phi_q(\overline{D}) \cdots \phi_{q^{d-1}}(\overline{D})\overline{Y}\\
&=&\overline{Y}^{-1} \cdot \overline{\hat D} \cdot \overline{Y},
\end{eqnarray*} where we used $\overline{D} \in \GL_n(\Fqd(t))$. Hence $\overline{Y}^{-1} \cdot \overline{\hat D} \cdot \overline{Y}$ has entries in $\Fqbar[[t]]^{\phi_q}=\Fq[[t]]$. 
\end{proof}

\begin{ex}
If $k=\Fq(s)$ and $\pfrak=(s-\alpha)$ is a finite place of degree $1$ ($\alpha \in \Fq$), then the Galois group scheme $\Hcal$ contains a conjugate of the specialized matrix $D_\alpha \in \G(\Fq(t))$ obtained by replacing each $s$ occurring in $D \in \G(\Fq(s,t))$ by $\alpha$. 
\end{ex}

\section{Generating Split Reductive Groups}\label{GE}
The lower bound criterion (Theorem \ref{uschr}) provides us with elements that are contained in the Galois group scheme up to conjugacy. In this section, we give generators of connected, reductive and $\Fq$-split groups that allow a certain conjugacy. 
\begin{prop}\label{genprop}
 Let $K_1$ be an infinite field and let $\G \leq \GL_n$ be a connected linear algebraic group defined over $K_1$ such that either $K_1$ is perfect or $\G$ is reductive. Let further $K_2/K_1$ be a field extension and consider the field of formal Laurent series $K_2((t))$ over $K_2$. If $\Hcal \subset \G$ is a closed subvariety defined over $K_2((t))$ such that for all $g \in \G(K_1)$ there exists an $h \in \Hcal(K_2[[t]])$ of the form $h=g+M_1t+M_2t^2+\dots$ for some $M_i \in M_n(K_2)$, then $\Hcal=\G$ holds. 
\end{prop}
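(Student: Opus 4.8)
The key point is that $\G(K_1)$ is already Zariski-dense in $\G$: since $K_1$ is infinite and either $K_1$ is perfect or $\G$ is reductive (hence smooth and, being connected, $K_1$-unirational by the standard structure theory — Borel, \emph{Linear Algebraic Groups}, 18.2), the group of $K_1$-rational points is dense in $\G$. So the hypothesis says that a dense set of $K_1$-points of $\G$ lie in the ``constant coefficient'' image of $\Hcal(K_2[[t]])$. The plan is to turn this into the statement that $\Hcal$, as a closed subvariety of $\G_{K_2((t))}$, is all of $\G_{K_2((t))}$, and then descend.

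First I would reduce to a scheme-theoretic reduction statement. Consider the closed subscheme $\Hcal \subseteq \G_{K_2((t))}$. I want to compare it with the reduction map: there is the subring $K_2[[t]] \subseteq K_2((t))$, and a point of $\Hcal(K_2[[t]])$ reduces mod $t$ to a point of $\G(K_2)$. Writing $I \subseteq K_2((t))[\G]$ for the ideal of $\Hcal$ (here $K_2((t))[\G] = K_2[\G]\otimes_{K_1}K_2((t))$), the condition ``$h = g + M_1 t + \cdots$ with $h\in\Hcal(K_2[[t]])$'' says precisely that every $f \in I$, when evaluated at such an $h$, vanishes; looking at the $t^0$-coefficient of $f(h)$ gives $\bar f(g) = 0$, where $\bar f \in K_2[\G]$ is obtained from $f$ by clearing denominators in $t$ and reducing mod $t$. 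The upshot: for each $f\in I$, after multiplying by a suitable power of $t$ to get $f\in K_2[[t]][\G]$, its reduction $\bar f \bmod t$ vanishes on all of $\G(K_1)$, hence (density of $\G(K_1)$ and reducedness of $\G$, which holds as $\G$ is smooth in both cases) $\bar f = 0$ in $K_2[\G]$, i.e. $f \in t\cdot K_2[[t]][\G]$.

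Now I would bootstrap. The previous paragraph shows $I\cap K_2[[t]][\G] \subseteq t\,K_2[[t]][\G]$. But $I$ is an ideal, so $I\cap K_2[[t]][\G]$ is a $K_2[[t]]$-submodule of $K_2[[t]][\G]$, and the relation $f\in tK_2[[t]][\G]$ combined with the fact that one may also scale $f$ by any power of $t^{-1}$ (staying in $I$, then re-clearing) lets me iterate: if $f\in I\cap K_2[[t]][\G]$ then $t^{-1}f\in I$, clear denominators trivially since $t^{-1}f\in K_2[[t]][\G]$ already, and conclude $t^{-1}f \in tK_2[[t]][\G]$, i.e. $f\in t^2 K_2[[t]][\G]$; inductively $f \in \bigcap_{n} t^n K_2[[t]][\G] = 0$ (the $t$-adic intersection theorem on the Noetherian ring $K_2[[t]][\G]$, using that $K_2[\G]$ is a finitely generated $K_1$-algebra). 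Hence $I\cap K_2[[t]][\G] = 0$, and since every element of $I$ becomes an element of $I\cap K_2[[t]][\G]$ after multiplying by a power of $t$, we get $I = 0$, i.e. $\Hcal = \G_{K_2((t))}$; since $\Hcal$ was assumed to be a closed subvariety of $\G$ (over the base where $\G$ is defined, compatibly), this forces $\Hcal = \G$.

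The main obstacle, and the step to be careful about, is the density claim and the passage between ``$\bar f$ vanishes on $\G(K_1)$'' and ``$\bar f = 0$ in $K_2[\G]$'': one needs $\G(K_1)$ to be dense in $\G_{K_1}$ \emph{and} $\G_{K_2}$ to be reduced so that a function vanishing on a dense set of $K_1$-points is zero even after extending scalars to $K_2$ — this is exactly where the hypothesis ``$K_1$ perfect or $\G$ reductive'' enters ($K_1$-unirationality of connected reductive groups, or of connected smooth groups over perfect fields, gives density; smoothness of $\G$ gives that $K_2[\G]$ is reduced, indeed a domain if $\G$ is connected, so no new relations appear over $K_2$). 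The rest is the routine $t$-adic Noetherian bookkeeping sketched above.
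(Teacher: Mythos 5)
Your proof is correct and, at its core, is the same argument as the paper's: exploit the density of $\G(K_1)$ in $\G$ and compare $t^0$-coefficients of defining equations of $\Hcal$ evaluated at points $h = g + M_1 t + \cdots$. The only organizational difference is that the paper isolates a single $f\in I(\Hcal)\setminus I(\G)$, takes the minimal index $j_0$ with $f_{j_0}\notin I(\G)$, subtracts off the lower-order part (which lies in $I(\G)$), shifts by $t^{-j_0}$, and reads off an immediate contradiction; you instead work in the coordinate ring $K_2((t))[\G]$ (so reduction modulo $I(\G)$ is built in) and bootstrap to $I\cap K_2[[t]][\G]\subseteq \bigcap_n t^n K_2[[t]][\G]=0$. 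That final step is stated slightly loosely: the ideal $(t)$ need not lie in the Jacobson radical of $K_2[[t]][\G]$, so Krull's intersection theorem does not apply verbatim. The clean justification is either that $K_2[[t]][\G]\cong K_2[[t]]\otimes_{K_2}K_2[\G]$ is a \emph{free} $K_2[[t]]$-module (as $K_2[\G]$ is free over $K_2$), so the $t$-adic filtration is separated, or that it is a domain because $\G$ is geometrically integral. You correctly flag the other subtle point — passing from ``$\bar f$ vanishes on $\G(K_1)$'' to ``$\bar f=0$ in $K_2[\G]$'' via a $K_1$-basis of $K_2$ and geometric reducedness — which is exactly where the hypothesis on $K_1$ and $\G$ is used. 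With the intersection step tightened, the proof is complete and reproduces the paper's argument in a somewhat more ideal-theoretic packaging.
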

\begin{proof} First of all, note that $\G(K_1)$ is dense in $\G$, as we assumed that either $K_1$ is perfect or $\G$ is reductive (see \cite[18.3]{Borel}).\\
Set $m=n^2+1$. Then $\G$ is a closed subvariety of affine $m$-space, since $\G\leq \GL_n$ holds. Let $K_t:=\overline{K_2((t))}$ be an algebraic closure of $K_2((t))$. We consider the vanishing ideals $I(\G)$ and $I(\Hcal)$ of $\G$ and $\Hcal$ inside $K_t[X_1,\dots,X_m]$. Assume that $\Hcal$ is strictly contained in $\G$, i.e., $I(\Hcal) \supsetneq I(\G)$. Now $I(\Hcal)$ is generated by finitely many elements inside $K_2((t)) [{X_1},\dots,{X_m}]$ and we conclude that at least one of them cannot be contained in $I(\G)$. Let $f \in K_2((t)) [{X_1},\dots,{X_m}]$ be such an element, i.e., $f \in I(\Hcal) \backslash I(\G)$. After multiplying by a suitable power of $t$, we may assume that $f$ is contained in $K_2[[t]][{X_1},\dots,{X_m}]\subset K_2[{X_1},\dots,{X_m}][[t]]$. Hence there exist elements $f_{j} \in K_2[{X_1},\dots,{X_m}]$ such that 
\[f=\sum\limits_{j=0}^{\infty} f_{j}t^j. \] As $\G(K_1)$ is dense in $\G$, there exists a $g \in \G(K_1)$ with $f(g) \neq 0$. It follows that there exists a $j \in \N$ such that $f_{j}(g) \neq 0$. Let $j_0 \in \N$ be minimal such that there exists a $g \in \G(K_1)$ with $f_{j_0}(g) \neq 0$. Hence $f_0,\dots,f_{j_0-1}$ vanish on all $\G(K_1)$ and are thus contained in $I(\G)$. Now consider
\[f':=t^{-j_0}(f-\sum_{j=0}^{j_0-1}f_jt^j)=f_{j_0}+f_{j_0+1}t+f_{j_0+2}t^2+\cdots. \]  As $f \in I(\Hcal) \backslash I(\G)$ and 
$\sum_{j=0}^{j_0-1}f_jt^j \in I(\G)$, we have $f' \in I(\Hcal)\backslash I(\G)$, as well. By definition of $j_0$, there exists a $g \in \G(K_1)$ such that $f_{j_0}(g) \neq 0$. By assumptions, there exists an $h=g+M_1t+M_2t^2+\dots \in \Hcal(K_2[[t]])$ for some $M_i \in M_n(K_2)$, i.e., $g$ occurs as the constant term of an element contained in $\Hcal(K_2[[t]])$. We compute
\[0=f'(h)=\sum\limits_{j=0}^{\infty} f_{j+j_0}(h)t^j \in K_2[[t]]  \] and compare the constant terms of both sides. The constant term of the right hand side equals the constant term of $f_{j_0}(h)$ which in turn equals $f_{j_0}(g)$, hence $0=f_{j_0}(g)$, a contradiction. Hence $\Hcal$ cannot be strictly contained in $\G$.
\end{proof}

\begin{thm}\label{arbgen}
Let $\G$ be a connected and reductive linear algebraic group defined over $\Fq$. Assume further that $\G$ splits over $\Fq$, i.e., there exists a maximal torus $T$ of $\G$ that is defined over $\Fq$ and splits over $\Fq$. Let $\Hcal$ be a closed subgroup of $\G$ defined over $\Fqbar((t))$ that contains a conjugate $T^A$ of $T$ for some $A \in \G(\Fq+t\Fqbar[[t]])$ and such that every $g \in \G(\Fq)$ occurs as the constant part of an element inside $\Hcal(\Fqbar[[t]])$. Then $\Hcal=\G$. In particular, $<T^A, \G(\Fq)>$ is dense in $\G$ for any $A \in \G(\Fq+t\Fqbar[[t]])$. 
\end{thm}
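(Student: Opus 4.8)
The plan is to reduce to the case $A=I$, so that the split maximal torus $T$ itself lies in $\Hcal$, then to use $T$ together with the hypothesis on $\G(\Fq)$ to show that \emph{every} $g\in\G(\Fqbar)$ occurs as the constant part of an element of $\Hcal$ with coefficients in $\Fqbar[[t]]$, and finally to conclude by Proposition~\ref{genprop} applied over the infinite field $\Fqbar$. For the reduction, note that $\Fq+t\Fqbar[[t]]$ is a local ring with residue field $\Fq$, so $A_0:=A\bmod t$ lies in $\G(\Fq)$ and $A,A^{-1}\in\G(\Fqbar[[t]])$. Conjugating by $A$, i.e. replacing $\Hcal$ by the closed subgroup $A\Hcal A^{-1}$ of $\G$ over $\Fqbar((t))$, changes nothing essential: it contains $A\,T^{A}A^{-1}=T$, and if $h\in\Hcal$ has coefficients in $\Fqbar[[t]]$ and constant part $g\in\G(\Fq)$, then $AhA^{-1}$ has coefficients in $\Fqbar[[t]]$ and constant part $A_0gA_0^{-1}$, which runs through all of $\G(\Fq)$. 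So I may assume $T\subseteq\Hcal$, prove $\Hcal=\G$, and conjugate back.

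Next, let $\pi\colon\GL_n(\Fqbar[[t]])\to\GL_n(\Fqbar)$ be reduction modulo $t$, a group homomorphism, and set $S:=\pi\bigl(\Hcal(\Fqbar((t)))\cap\GL_n(\Fqbar[[t]])\bigr)$. Since $\Hcal(\Fqbar((t)))\cap\GL_n(\Fqbar[[t]])$ is a subgroup of $\GL_n(\Fqbar((t)))$ and the image of each of its elements satisfies the $\Fq$-defining equations of $\G$ (these are preserved by the ring map $\Fqbar[[t]]\to\Fqbar$), the set $S$ is a subgroup of $\G(\Fqbar)$. It contains $T(\Fqbar)$, because constant torus matrices lie in $\Hcal(\Fqbar((t)))\cap\GL_n(\Fqbar[[t]])$ and are fixed by $\pi$, and it contains $\G(\Fq)$ by hypothesis (if $\pi(h)\in\GL_n(\Fq)$ then automatically $\det h\in\Fqbar[[t]]^{\times}$, so the relevant $h$ lies in $\GL_n(\Fqbar[[t]])$). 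I then claim $\langle T(\Fqbar),\G(\Fq)\rangle=\G(\Fqbar)$: since $T$ splits over $\Fq$, each root subgroup $U_\alpha$ of $\G$ relative to $T$ is defined over $\Fq$ and admits an isomorphism $u_\alpha\colon\mathbb{G}_{a}\to U_\alpha$ over $\Fq$, so $u_\alpha(1)\in\G(\Fq)$ is nontrivial; conjugating $u_\alpha(1)$ by $T(\Fqbar)$ gives $u_\alpha(\alpha(t_0))$ with $\alpha(t_0)$ ranging over all of $\Fqbar^{\times}$ (as $\Fqbar^{\times}$ is divisible and $\alpha$ is a nonzero character), and products of these fill out $U_\alpha(\Fqbar)$; as $\G$ is connected, reductive and split, it is generated as an algebraic group by $T$ and the $U_\alpha$, and over the algebraically closed field $\Fqbar$ the rational points of these subgroups generate $\G(\Fqbar)$. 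Therefore $S=\G(\Fqbar)$.

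Consequently, for every $g\in\G(\Fqbar)$ there is an $h\in\Hcal$ with coefficients in $\Fqbar[[t]]$ of the form $h=g+M_1t+M_2t^2+\cdots$ with $M_i\in\Mn(\Fqbar)$, and Proposition~\ref{genprop} applied with $K_1=K_2=\Fqbar$ (infinite and perfect, with $\G$ reductive) and the closed subvariety $\Hcal\subseteq\G$ over $\Fqbar((t))$ yields $\Hcal=\G$; undoing the conjugation finishes the proof. For the final assertion one takes $\Hcal$ to be the closed subgroup of $\G$ generated by $T^{A}$ and the finite set $\G(\Fq)$ (defined over $\Fqbar((t))$ as it is $\Gal(\overline{\Fqbar((t))}/\Fqbar((t)))$-stable), which satisfies the hypotheses of the theorem since each $g\in\G(\Fq)$ is the constant part of the constant element $g\in\Hcal$, so $\Hcal=\G$. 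I expect the main obstacle to be precisely that $\G(\Fq)$ is finite, hence not Zariski dense, so Proposition~\ref{genprop} cannot be applied to it directly; getting around this by fattening $\G(\Fq)$ with the torus $T^{A}$ is where the $\Fq$-split structure of $\G$ must be exploited, both to make the root group parametrizations $\Fq$-rational and to invoke the standard generation of a split connected reductive group by a maximal torus and the root subgroups. A secondary technical point, dealt with above by working throughout with $\Hcal(\Fqbar((t)))\cap\GL_n(\Fqbar[[t]])$, is to give ``$\Hcal(\Fqbar[[t]])$'' a meaning for which reduction modulo $t$ is a well-defined group homomorphism into $\G(\Fqbar)$.
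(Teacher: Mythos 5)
Your proof is correct and uses essentially the same strategy as the paper: generate $\G(\Fqbar)$ from the split torus together with the $\Fq$-rational root-group elements $u_\alpha(1)\in\G(\Fq)$, lift each $g\in\G(\Fqbar)$ to an element of $\Hcal(\Fqbar[[t]])$ with constant part $g$, and apply Proposition~\ref{genprop} over $K_1=K_2=\Fqbar$. Your normalization to $A=I$ at the outset and the group-theoretic framing via the image subgroup $S=\pi(\Hcal(\Fqbar((t)))\cap\GL_n(\Fqbar[[t]]))$ are a cleaner packaging of the same argument; the paper instead carries $A$ through, writing each lift out explicitly as a product $x_1^{A}(y_1^{A})^{-1}h_1y_1^{A}\cdots x_{r+1}^{A}$ whose constant part is the given $T^{A_0}$-root-group decomposition of $g$.
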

\begin{proof}
 By Proposition \ref{genprop} (with $K_1=K_2=\Fqbar$), it is sufficient to show that for any $g \in \G(\Fqbar)$, there exists an element $h \in \Hcal(\Fqbar[[t]])$ with constant part $g$. \\
\\
As the constant part $A_0$ of $A$ is contained in $\G(\Fq)$, the maximal torus $T^{A_0}$ is defined over $\Fq$ and also splits over $\Fq$. Let $\Phi(\G,T^{A_0})$ denote the set of roots with respect to $T^{A_0}$ and for $\alpha \in \Phi(\G,T^{A_0})$, let $U_\alpha$ be the root subgroup corresponding to $\alpha$. Since $T^{A_0}$ splits over $\Fq$, all root subgroups are defined over $\Fq$ and we have isomorphisms 
\[u_\alpha \colon \mathbb{G}_a \rightarrow U_\alpha \] defined over $\Fq$ for all $\alpha \in \Phi(\G,T^{A_0})$ (see \cite[V.18.7]{Borel} for a proof). Now $\G$ is generated by $T^{A_0}$ together with all root subgroups (see \cite[8.1.1]{Springer}) and as all of these are defined over $\Fq\subseteq \Fqbar$, we obtain
\begin{eqnarray*}
 \G(\Fqbar)&=&<T^{A_0}(\Fqbar), U_\alpha(\Fqbar) \ | \  \alpha \in \Phi(\G,T^{A_0})> \\
		&=&<T(\Fqbar)^{A_0}, U_\alpha(\Fqbar) \ | \  \alpha \in \Phi(\G,T^{A_0})>.
\end{eqnarray*}

 Let now $g$ be contained in $\G(\Fqbar)$. Then there exist an $r \in \N$, roots $\alpha_1,\dots,\alpha_r \in  \Phi(\G,T^{A_0})$ (not necessarily pairwise distinct), $s_1,\dots, s_r \in \Fqbar$ as well as $x_1,\dots, x_{r+1} \in T(\Fqbar)$ such that $g$ can be written as 
\[g=x_1^{A_0}u_{\alpha_1}(s_1)\cdots x_r^{A_0}u_{\alpha_r}(s_r)x_{r+1}^{A_0} .\] Any root $\alpha \in \Phi(\G,T^{A_0})$ is a non-trivial character
$\alpha \colon T^{A_0} \rightarrow \mathbb{G}_m$, hence it is surjective. As $u_\alpha(0)=1$ holds for all $\alpha \in \Phi(\G,T^{A_0})$, we may assume that all $s_1,\dots,s_r$ are contained in $\Fqbar^{\times}$, so there exist elements $y_1^{A_0},\dots,y_r^{A_0} \in T^{A_0}(\Fqbar)$ (that is, $y_1,\dots, y_r$ are contained in $T(\Fqbar)$) such that 
\[s_i=\alpha_i(y_i^{A_0}) \] for $1 \leq i \leq r$. The root subgroup isomorphisms $u_\alpha$ are subject to the relation
\[u_\alpha(\alpha(y)s)=u_\alpha(s)^y  \] for all elements $y$ in the maximal torus and field elements $s$. Therefore, we have
$u_{\alpha_i}(s_i)=u_{\alpha_i}(\alpha_i(y_i^{A_0})\cdot 1)=u_{\alpha_i}(1)^{y_i^{A_0}}$ for all $1 \leq i \leq r$ and thus
\[g=x_1^{A_0}({y_1^{A_0}})^{-1}u_{\alpha_1}(1)y_1^{A_0}\cdots x_r^{A_0}(y_r^{A_0})^{-1}u_{\alpha_r}(1)y_r^{A_0}x_{r+1}^{A_0} .\] 
As all isomorphisms $u_{\alpha_i}$ are defined over $\Fq$, we have $u_{\alpha_i}(1) \in \G(\Fq)$ for all $i \leq r$. By assumptions, there exist elements $h_1,\dots,h_r \in \Hcal(\Fqbar[[t]])$ such that the constant part of $h_i$ equals $u_{\alpha_i}(1)$ for $1 \leq i \leq r$. 
Now consider
\[h:=x_1^{A}(y_1^{A})^{-1}h_1y_1^{A}\cdots x_r^{A}(y_r^{A})^{-1}h_ry_r^{A}x_{r+1}^{A} \in \Hcal(\Fqbar[[t]]).\]  Then the constant part of $h$ equals $g$ (recall that $x_1$ and $y_1$ are contained in $\G(\Fqbar)$). Hence $\Hcal=\G$ holds. \\
\\
As a special case, let $\Hcal \subseteq \G$ be the Zariski closure of $<T^A, \G(\Fq)>$. As $A$ is contained in $\G(\Fqbar((t)))$, we deduce that $T^A \cup \G(\Fq)$ is a closed subset of $\G$ defined over $\Fqbar((t))$. Therefore, $\Hcal$ is defined over $\Fqbar((t))$ as well (see \cite[I.2.1(b)]{Borel}). Hence $\Hcal$ conforms to the assumptions made in this Theorem, and $\Hcal=\G$ follows. 
\end{proof} 

 The lower bound criterion Theorem \ref{uschr} provides us with $\G(\Fqbar[[t]])$-conjugates of certain elements that are contained in the Galois group. Therefore, we have to descend from $\G(\Fqbar[[t]])$-conjugacy to $\G(\Fq+t\cdot \Fqbar[[t]])$-conjugacy in order to be able to apply Theorem \ref{arbgen}.

\begin{prop}\label{descendconj}
Let $\G \leq \GL_n$ be a linear algebraic group defined over $\Fq$. Let $g,h$ be contained in $\G(\Fq+t\cdot \Fqbar[[t]])$. Assume that $g$ is contained in a maximal torus $T$ of $\G$ defined over $\Fq$ and that the centralizer of the constant part $g_0 \in T(\Fq)$ of $g$ equals $T$. If $g$ and $h$ are conjugate over $\G(\Fqbar[[t]])$ then they are already conjugate over $\G(\Fq+t\cdot \Fqbar[[t]])$.
\end{prop}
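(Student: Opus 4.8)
The plan is to push the whole problem down to the residue field $\Fq$, where Lang's theorem becomes available for the maximal torus $T$. Abbreviate $R \coloneqq \Fqbar[[t]]$ and $R_0 \coloneqq \Fq + t\cdot\Fqbar[[t]]$; note that $R_0$ is a local ring with maximal ideal $t\Fqbar[[t]]$ and residue field $\Fq$, sitting inside $R$. I would first record the elementary observation that an element of $\GL_n(R)$ whose reduction modulo $t$ lies in $\GL_n(\Fq)$ automatically lies in $\GL_n(R_0)$: its entries then lie in $\Fq + t\Fqbar[[t]]$, and its determinant is a unit of $R$ with constant term in $\Fq^\times$, hence a unit of the local ring $R_0$. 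Now suppose $h = zgz^{-1}$ with $z \in \G(R)$. Reducing modulo $t$ yields $z_0 \in \G(\Fqbar)$ with $z_0 g_0 z_0^{-1} = h_0$, where $g_0, h_0 \in \G(\Fq)$ denote the constant parts of $g$ and $h$.

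Next I would make the centralizer hypothesis do its work. Consider the scheme $V_0 \coloneqq \{ v \in \G \mid v g_0 v^{-1} = h_0 \}$: it is defined over $\Fq$ since $g_0, h_0 \in \G(\Fq)$, it is non-empty since $z_0 \in V_0(\Fqbar)$, and right multiplication by the centralizer $Z_\G(g_0) = T$ exhibits it as a torsor under the torus $T$ over $\Fq$. Since $T$ is a connected smooth group scheme over the finite field $\Fq$, Lang's theorem gives $H^1(\Fq, T) = 0$, so $V_0$ is the trivial torsor and there is a rational point $y_0 \in V_0(\Fq) \subseteq \G(\Fq)$, i.e. $y_0 g_0 y_0^{-1} = h_0$. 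As $y_0$ and $z_0$ are two points of the $T$-torsor $V_0$, the element $c_0 \coloneqq y_0^{-1} z_0$ lies in $T(\Fqbar)$.

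Then I would correct $z$ by $c_0$. Viewing $c_0 \in T(\Fqbar)$ as a constant element of $T(R)$, set $z' \coloneqq z c_0^{-1} \in \G(R)$. This $z'$ still conjugates $g$ to $h$: indeed $c_0^{-1}$ and $g$ both lie in the commutative group $T$ over $R$ (here using $g \in T(R_0) \subseteq T(R)$), so $c_0^{-1} g c_0 = g$ and hence $z' g (z')^{-1} = z g z^{-1} = h$. On the other hand the constant part of $z'$ equals $z_0 c_0^{-1} = z_0 z_0^{-1} y_0 = y_0 \in \G(\Fq)$. By the observation from the first paragraph, $z'$ therefore lies in $\GL_n(R_0)$, and since $\G$ is cut out inside $\GL_n$ by equations with coefficients in $\Fq \subseteq R_0$, we conclude $z' \in \G(R_0)$ — the conjugator we wanted.

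The only step that uses more than bookkeeping is the identification of $V_0$ as a torsor under the \emph{connected} group $T$: this is exactly where the hypothesis that the centralizer of $g_0$ equals the maximal torus $T$ is consumed, and it is what makes Lang's theorem applicable. Everything afterwards is elementary, the only mild care being the passage between ``entries in $R_0$'' and ``$R_0$-points of $\G$'', which is handled by the unit description of the local ring $R_0$.
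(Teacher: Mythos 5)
Your proof is correct and follows essentially the same route as the paper: reduce to constant parts, use the centralizer hypothesis to see that the ambiguity in the constant conjugator lives in the torus $T$, and invoke Lang's theorem for $T$ to replace it by an $\Fq$-rational one, then correct the full conjugator by a constant element of $T$ (which commutes with $g$). The only cosmetic difference is that the paper applies Lang's theorem in its isogeny form — solving $\phi_q(y)y^{-1}=\phi_q(A_0)A_0^{-1}$ for $y\in T(\Fqbar)$ after noting $\phi_q(A_0)A_0^{-1}\in T(\Fqbar)$ — whereas you package the same input as $H^1(\Fq,T)=0$ applied to the $T$-torsor $V_0$; these are equivalent formulations of the same fact.
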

\begin{proof}
Let $A \in \G(\Fqbar[[t]]))$ be such that $g^A=h$. As $\G$ is defined over $\Fq$, the constant part $A_0$ of $A$ is contained in $\G(\Fqbar)$. Similarly, the constant parts $g_0$ of $g$ and $h_0$ of $h$ are contained in $\G(\Fq)$. Then $g^A=h$ implies $g_0^{A_0}=h_0$ and applying $\phi_q$ on both sides yields $g_0^{\phi_q(A_0)}=g_0^{A_0}$. Applying the Lang isogeny (\cite[V.16.4]{Borel})) to the centralizer $T$ of $g_0$, we obtain an element $y \in T(\Fqbar)$ satisfying $\phi_q(y)y^{-1}=\phi_q(A_0)A_0^{-1}$. Hence $y^{-1}A_0$ is contained in $\G(\Fq)$ and $h=g^{A}=g^{y^{-1}A}$ holds. The constant part of $y^{-1}A$ equals $y^{-1}A_0$ which is $\Fq$-rational. Hence $h$ and $g$ are conjugate over $\G(\Fq+t\cdot \Fqbar[[t]])$.
\end{proof}

\section{The Main Result} \label{GEN}
In this section, we prove that every semisimple, simply-connected group defined over $\Fq$ can be realized as a difference Galois group over $(\Fqist,\phi_{q^i})$ for some $i \in \N$ using a result of Nori (Theorem \ref{Nori}) on finite Galois theory. 
\subsection{Finite Frobenius modules}
\noindent A $\phi_q$-difference module over $(\Fq(s),\phi_q)$ is called a \textit{finite Frobenius module} over $(\Fq(s),\phi_q)$. Any finite Frobenius module has a unique Picard-Vessiot ring inside $\Fqsep$. The Picard-Vessiot ring $E$ is then a finite Galois extension of $\Fq(s)$ which we call the Picard-Vessiot extension. The $\Fq$-rational points of the corresponding (finite) Galois group scheme $\G \leq \GL_n$ are isomorphic to $\Gal(E/F)$ via identifying $\gamma \in \Gal(E/F)$ with $Y^{-1}\gamma(Y) \in \G(\Fq)$, where $Y \in \GL_n(E)$ denotes a fixed fundamental solution matrix (see Proposition \ref{closedemb}). We call $G=\G(\Fq)$ the Galois group of $M$. Every finite Galois extension can be obtained in this way using additive polynomials. Details can be found in \cite{Matzat}. 

\begin{thm}[Nori]\label{Nori}
Let $\G$ be a semisimple, simply-connected linear algebraic group defined over $\Fq$. Then there exists a finite Frobenius module over $(\Fq(s),\phi_q)$ with representing matrix contained in $\G(\Fq(s))$, Picard-Vessiot extension $E/\Fq(s)$ linearly disjoint from $\Fqbar$ over $\Fq$, and Galois group $\G(\Fq)$. 
\end{thm}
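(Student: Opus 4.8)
The plan is to derive the statement from Nori's original work together with a passage through additive (linearized) polynomials, making explicit the connection between finite Galois extensions of $\Fq(s)$ and finite Frobenius modules sketched just before the statement. First I would invoke the main theorem of \cite{Nori} in the form: every semisimple, simply-connected group $\G$ over $\Fq$ admits a Galois extension $E/\Fq(s)$ with $\Gal(E/\Fq(s)) \cong \G(\Fq)$, arising geometrically, so that $E$ is linearly disjoint from $\Fqbar$ over $\Fq$ (equivalently, $\Fq$ is algebraically closed in $E$, i.e.\ the extension is geometric). The content left to supply is that this $E$ is the Picard-Vessiot extension of a finite Frobenius module whose representing matrix already lies in $\G(\Fq(s))$, not merely in $\GL_n(\Fq(s))$.

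The key steps, in order. (1) Recall the dictionary for finite Frobenius modules over $(\Fq(s),\phi_q)$: by the theory recalled above (and \cite{Matzat}), a finite Frobenius module with representing matrix $D_0 \in \GL_n(\Fq(s))$ has a unique Picard-Vessiot ring $E \subseteq \Fqsep$, $E/\Fq(s)$ is finite Galois, and $\Gal(E/\Fq(s)) \xrightarrow{\sim} \G(\Fq)$, $\gamma \mapsto Y^{-1}\gamma(Y)$, where $Y \in \GL_n(E)$ is a fundamental matrix and $\G$ is the Zariski closure of the image; conversely every finite Galois extension is so obtained via the columns of $Y$ being a basis of solutions of a system of $q$-linearized (additive) polynomial equations. (2) Given the Galois extension $E/\Fq(s)$ from Nori with group $\G(\Fq)$, choose a faithful $\Fq$-rational representation $\G \hookrightarrow \GL_n$; the point of requiring $\G$ semisimple simply-connected is that Nori's construction produces $E$ with an $\Fq$-structure compatible with a linear action, so that one can find a fundamental solution matrix $Y \in \GL_n(E)$ for which the matrix $D_0 := Y \phi_q(Y)^{-1}$ (equivalently $\phi_q(Y) = D_0^{-1} Y$) has entries fixed by $\Gal(E/\Fq(s))$ and hence lies in $\GL_n(\Fq(s))$. (3) Verify that the Galois group scheme of this Frobenius module is exactly $\G$: since $\gamma(Y) = Y\cdot(Y^{-1}\gamma(Y))$ and $Y^{-1}\gamma(Y) \in \G(\Fq)$ for all $\gamma$ (because the $\Gal$-action on $Y$ is through $\G(\Fq)$ by construction), the Galois group scheme is contained in $\G$; and it surjects onto $\G(\Fq)$ because $\Gal(E/\Fq(s)) \cong \G(\Fq)$ and $E$ is generated by the entries of $Y$. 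Since a finite group scheme over $\Fq$ whose $\Fq$-points fill out $\G(\Fq)$ and which sits inside $\G$ must, for $\G$ the (reduced) finite group scheme $\G(\Fq)$ viewed as constant group — more precisely since $|\G(\Fq)| = \dim_{\Fq(s)} E = $ order of the Galois group scheme — equality follows. (4) Finally, linear disjointness of $E$ from $\Fqbar$ over $\Fq$ is exactly the geometric-extension property recorded in Nori's construction, so it carries over verbatim.

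I expect the main obstacle to be step (2): producing a fundamental matrix $Y$ for which $D_0 = \phi_q(Y)^{-1}Y$ is defined over $\Fq(s)$ and not just over $\Fqbar(s)$ or over $E$. This is where one genuinely uses that Nori realizes $\G(\Fq)$ as a Galois group through an $\Fq$-rational linear action (via additive polynomials, cf.\ the algebraic-group-theoretic construction in \cite{Nori} built on Lang's theorem), rather than merely as an abstract Galois group; once a $\Gal(E/\Fq(s))$-equivariant $\Fq(s)$-form of the representation is in hand, the $\phi_q$-descent to get $D_0 \in \GL_n(\Fq(s))$ is formal. The remaining steps — the dictionary in step (1), the group-scheme identification in step (3), and the disjointness in step (4) — are routine given the material already assembled in this section and in \cite{Matzat}.
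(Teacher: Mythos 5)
Your outline matches the paper's approach -- invoke Nori's covering, pass through the dictionary between finite Galois extensions of $\Fq(s)$ and finite Frobenius modules, and deduce linear disjointness from $\Fqbar$ from the absolute irreducibility of the covering -- but you leave a genuine gap at precisely the step you yourself flag as the main obstacle. To obtain a representing matrix $D_0 \in \G(\Fq(s))$ (rather than merely $\GL_n(\Fq(s))$) one must produce a fundamental solution matrix $Y$ lying in $\G(E)$ on which $\Gal(E/\Fq(s))$ acts by right translation by elements of $\G(\Fq)$, so that $D_0 = Y\phi_q(Y)^{-1}$ is both in $\G$ and Galois-invariant. You assert this is ``formal once a $\Gal(E/\Fq(s))$-equivariant $\Fq(s)$-form of the representation is in hand,'' but constructing such a form from Nori's covering is exactly the nontrivial content, not a formality. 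The paper closes this gap by appealing to Theorem 5.2 of \cite{Matzat}, which manufactures an effective finite Frobenius module -- with representing matrix in $\G(\Fq(s))$ -- from an unramified (absolutely irreducible) Galois covering of the affine line with group $\G(\Fq)$. Without that reference, or a proof of an equivalent statement, your step (2) is an unsubstantiated claim rather than a derivation.

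A secondary point: your step (3) is superfluous and slightly confused. In the paper's conventions the ``Galois group'' of a finite Frobenius module is by definition $\Gal(E/\Fq(s))$ identified with the $\Fq$-points of the finite Galois group scheme via $\gamma \mapsto Y^{-1}\gamma(Y)$, so once Nori gives $\Gal(E/\Fq(s)) \cong \G(\Fq)$ and Matzat gives $D_0 \in \G(\Fq(s))$, the conclusion is complete. Your cardinality argument tries to compare the semisimple algebraic group $\G$ (positive-dimensional) with the finite constant group scheme whose $\Fq$-points are $\G(\Fq)$, which are different objects; that comparison should be dropped.
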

\begin{proof}
Nori proved that there exists an absolutely irreducible unramified Galois covering of the affine line with Galois group $\G(\Fq)$ (\cite{Nori}). By Theorem 5.2. in \cite{Matzat}, there exists an effective, finite Frobenius module corresponding to the Galois covering provided by Theorem \ref{Nori}, i.e., the representing matrix can be chosen inside $\G(\Fq(s))$.  The Picard-Vessiot extension $E$ is linearly disjoint from $\Fqbar$ over $\Fq$ since the corresponding Galois covering is absolutely irreducible. 
\end{proof}

The following lower bound criterion for finite Frobenius modules due to Matzat can be found in \cite[Thm 4.5]{Matzat}. 
\begin{thm}[Matzat]\label{Matzat}
 Let $M$ be a finite Frobenius module over $(\Fq(s), \phi_q)$ with representing matrix $D \in \GL_n(\Fq(s))$ and Picard-Vessiot extension $E/\Fq(s)$. We fix a fundamental solution matrix $Y \in \GL_n(E)$. Let $\pfrak$ be a place of degree $d$ of $\Fq(s)$ with corresponding valuation ring $\ofrak\subseteq \Fq(s)$. If $D$ is contained in $\GL_n(\ofrak)$ then the following holds:
\begin{itemize}
\item $E/\Fq(s)$ is unramified at $\pfrak$.
\item For any extension $(\Ocal,\Pcal)$ of $(\ofrak,\pfrak)$ to $E$, $Y$ is contained in $\GL_n(\Ocal)$.
\item The Galois group $\Gal(E/\Fq(s))\leq \GL_n(\Fq)$ of $M$ contains the reduction of 
$Y^{-1}D\phi_q(D)\cdots\phi_{q^{d-1}}(D)Y$ modulo $\Pcal$ for any extension $\Pcal$ of $\pfrak$. 
\end{itemize}
\end{thm}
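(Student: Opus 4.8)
The plan is to run the standard Dedekind-type argument behind such ``linear'' criteria: transport the whole Picard--Vessiot situation down to the residue field at $\pfrak$ and there recognise a Frobenius element. I would treat the three assertions in the order stated, with the functional equation $D\phiq(Y)=Y$, equivalently $\phiq(Y)=D^{-1}Y$, carrying essentially all the weight. I will use freely that $\phiq$ on $E\subseteq\Fqsep$ is the $q$-power Frobenius, so that every field automorphism of $E$ is automatically a difference automorphism and $\gamma\mapsto Y^{-1}\gamma(Y)$ realises the embedding $\Gal(E/\Fq(s))\hookrightarrow\GL_n(\Fq)$ used in the statement.

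First I would show $Y\in\GL_n(\Ocal)$ for an arbitrary extension $(\Ocal,\Pcal)$ of $(\ofrak,\pfrak)$ to $E$, with valuation $\nu$. Since $D\in\GL_n(\ofrak)$ we have $\det(D)\in\ofrak^\times$, so $D$ and $D^{-1}$ both have entries in $\ofrak$. Put $r=\min_{i,j}\nu(Y_{ij})$: from $Y=D\phiq(Y)$ and $\nu(\phiq(a))=q\,\nu(a)$ one gets $\nu(Y_{ij})\ge qr$ for all $i,j$, hence $r\ge qr$, while from $\phiq(Y)=D^{-1}Y$ one gets $q\,\nu(Y_{ij})\ge r$, hence $qr\ge r$. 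As $q>1$ this forces $r=0$, so $Y\in\Mn(\Ocal)$; and taking determinants in $D\phiq(Y)=Y$ gives $\det(D)\det(Y)^q=\det(Y)$ with $\det(D)$ a unit, whence $\det(Y)\in\Ocal^\times$ and $Y\in\GL_n(\Ocal)$.

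Next I would prove unramifiedness. Because residue extensions of finite fields are separable, it suffices to show the inertia group $I_\Pcal\le\Gal(E/\Fq(s))$ is trivial. Given $\sigma\in I_\Pcal$, put $B:=Y^{-1}\sigma(Y)\in\GL_n(\Fq)$; reducing $\sigma(Y)=YB$ modulo $\Pcal$ and using that $\sigma$ acts trivially on $\Ocal/\Pcal$ gives $\overline Y=\overline Y\cdot\overline B$ with $\overline Y\in\GL_n(\Ocal/\Pcal)$ invertible by the previous step, so $\overline B=1$; as reduction is injective on $\Fq$ this forces $B=1$, i.e.\ $\sigma$ fixes every entry of $Y$ and hence fixes $E=\Fq(s)(Y_{ij},\det(Y)^{-1})$. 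Thus $\sigma=\operatorname{id}$, $I_\Pcal=1$, and $E/\Fq(s)$ is unramified at $\pfrak$; together with the first step this yields the first two bullets (for every $\Pcal\mid\pfrak$).

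Finally, unramifiedness provides for each $\Pcal\mid\pfrak$ a Frobenius $\operatorname{Frob}_\Pcal$ in the decomposition group of $\Pcal$, acting on $\Ocal/\Pcal$ by $x\mapsto x^{q^d}$ (the residue field of $\pfrak$ having $q^d$ elements). Iterating $\phiq(Y)=D^{-1}Y$ gives $\phiq^{d}(Y)=\hat D^{-1}Y$ with $\hat D:=D\phiq(D)\cdots\phiq^{\,d-1}(D)\in\GL_n(\ofrak)$, hence $Y^{-1}\hat D\,Y=\phiq^{d}(Y)^{-1}Y$. Reducing modulo $\Pcal$ and using that the coefficient-wise $q^d$-power $\phiq^{d}$ agrees with $\operatorname{Frob}_\Pcal$ on $\Ocal/\Pcal$ yields $\kappa(Y^{-1}\hat D\,Y)=\kappa(\operatorname{Frob}_\Pcal(Y))^{-1}\kappa(Y)$; writing $\operatorname{Frob}_\Pcal(Y)=Y A$ with $A=Y^{-1}\operatorname{Frob}_\Pcal(Y)\in\GL_n(\Fq)$ the image of $\operatorname{Frob}_\Pcal$, this equals $A^{-1}$, the image of $\operatorname{Frob}_\Pcal^{-1}\in\Gal(E/\Fq(s))$, which is exactly the third bullet. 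None of the steps is deep; I expect the only real care to be needed in this last step, in matching up the residue map $\kappa$, the arithmetic Frobenius $\operatorname{Frob}_\Pcal$ and the linearisation $\gamma\mapsto Y^{-1}\gamma(Y)$ — getting the exponent $q^d$ (not some larger power coming from the residue extension $\Ocal/\Pcal$) and the inverse right — and in the two bookkeeping points that $\hat D\in\GL_n(\ofrak)$ and that $\operatorname{Frob}_\Pcal$ stabilises $\Ocal$, so that all the reductions above make sense.
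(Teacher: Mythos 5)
The paper does not prove Theorem~\ref{Matzat}: it simply cites it as Theorem~4.5 of \cite{Matzat}, so there is no in-paper proof to compare against. Your argument is correct and is exactly the Dedekind-type specialization one expects behind Matzat's criterion: the two valuation inequalities $r\ge qr$ and $qr\ge r$ forcing $r=0$ (hence $Y\in\GL_n(\Ocal)$, with the determinant equation $\det(D)\det(Y)^q=\det(Y)$ giving the unit condition); the inertia computation via reducing $\sigma(Y)=YB$ and cancelling the invertible $\overline Y$; and the Frobenius identification $\kappa(Y^{-1}\hat D\,Y)=A^{-1}$ with $A$ the linearization of $\operatorname{Frob}_\Pcal$, so that the reduction is the image of $\operatorname{Frob}_\Pcal^{-1}\in\Gal(E/\Fq(s))$. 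The last point is consistent with the paper's own use of the result in Proposition~\ref{converseMatzat}, where the element $\gamma$ realised by a given place $\pfrak$ is stated to satisfy ``$\gamma^{-1}$ is in the decomposition group and acts as $\phi_{q^d}$,'' i.e.\ $\gamma=\operatorname{Frob}_\Pcal^{-1}$ — so your bookkeeping of the exponent and the inverse, which you rightly flag as the only delicate point, is correct.
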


\noindent The following Proposition provides a converse to Theorem \ref{Matzat}:
\begin{prop}\label{converseMatzat}
Let $M$ be a finite Frobenius module over $(\Fq(s), \phi_q)$ with representing matrix $D \in \GL_n(\Fq(s))$, Picard-Vessiot extension $E/\Fq(s)$, and Galois group $G \leq \GL_n(\Fq)$. We fix a fundamental solution matrix $Y \in \GL_n(E)$. Let $g \in G$. Then there exist infinitely many places $(\ofrak,\pfrak)$ of $\Fq(s)$ such that $D$ is contained in $\GL_n(\ofrak)$ and such that there is an extension $(\Ocal, \Pcal)$ from $\Fq(s)$ to $E$ where $g$ equals the reduction of $Y^{-1}D\phi_q(D)\cdots\phi_{q^{\deg(\pfrak)-1}}(D)Y \mod \Pcal$.
\end{prop}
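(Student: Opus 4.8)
The plan is to realize $g$ as a Frobenius conjugacy class in the finite Galois group via the Chebotarev density theorem for the covering corresponding to $E/\Fq(s)$. Recall that by Theorem \ref{Nori} and the construction in \cite{Matzat}, the Picard-Vessiot extension $E/\Fq(s)$ is a finite Galois extension with group $G \cong \Gal(E/\Fq(s))$, unramified outside the finite set $S$ of bad places (those where $D$ is not in $\GL_n(\ofrak)$). First I would identify, via the isomorphism $\Gal(E/\Fq(s)) \to G$, $\gamma \mapsto Y^{-1}\gamma(Y)$, the element $g$ with an automorphism $\gamma_g \in \Gal(E/\Fq(s))$.

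Next I would invoke the function-field Chebotarev density theorem (equivalently, the geometric Chebotarev theorem for the smooth affine curve corresponding to $\Fq(s)$ with the finitely many points of $S$ removed): for any conjugacy class $\mathcal{C}$ in $\Gal(E/\Fq(s))$, there are infinitely many places $\pfrak$ of $\Fq(s)$, unramified in $E$, whose Frobenius conjugacy class $\mathrm{Frob}_\pfrak$ equals $\mathcal{C}$. Apply this to the class of $\gamma_g$. This yields infinitely many places $(\ofrak,\pfrak)$, none in $S$, so $D \in \GL_n(\ofrak)$, together with an extension $(\Ocal,\Pcal)$ of $(\ofrak,\pfrak)$ to $E$ for which the Frobenius substitution $\mathrm{Frob}_\Pcal \in \Gal(E/\Fq(s))$ is exactly $\gamma_g$ (choosing $\Pcal$ picks out a representative of the conjugacy class; different choices of $\Pcal$ above $\pfrak$ give the various conjugates, so one can pin down $\gamma_g$ itself).

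It then remains to match $\mathrm{Frob}_\Pcal$ with the reduction of $Y^{-1}D\phi_q(D)\cdots\phi_{q^{\deg(\pfrak)-1}}(D)Y$ modulo $\Pcal$. The key computation is that for a place $\pfrak$ of degree $d=\deg(\pfrak)$ with $D \in \GL_n(\ofrak)$, Theorem \ref{Matzat} (the second bullet) already gives $Y \in \GL_n(\Ocal)$, so the reduction is defined; and the product $\hat D := D\phi_q(D)\cdots\phi_{q^{d-1}}(D)$ satisfies $\hat D \,\phi_q^{d}(Y) = Y$ since $D\phi_q(Y)=Y$ telescopes, i.e.\ $Y^{-1}\hat D\, \phi_{q^d}(Y) = I_n$, hence $Y^{-1}\hat D Y = Y^{-1}\hat D\, \phi_{q^d}(Y)\cdot (\phi_{q^d}(Y))^{-1}Y = \phi_{q^d}(Y)^{-1}Y \bmod \Pcal$. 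Now $\phi_{q^d}$ reduces modulo $\Pcal$ to the $q^d$-power map on $\Ocal/\Pcal$, which (since $\ofrak/\pfrak \cong \Fqd$) is precisely the Frobenius of $\Pcal$ over $\pfrak$; thus $\overline Y^{-1}\,\overline{\hat D}\,\overline Y = \mathrm{Frob}_\Pcal(\overline Y)^{-1}\overline Y$, which under the identification $\gamma \mapsto Y^{-1}\gamma(Y)$ is exactly the image of $\mathrm{Frob}_\Pcal = \gamma_g$, namely $g$.

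I expect the main obstacle to be purely bookkeeping: making sure the normalization of the Frobenius (acting on $E$ versus its reduction, and whether it appears as $\gamma$ or $\gamma^{-1}$) is consistent with the sign conventions in the identification $\Gal(E/\Fq(s)) \cong G$ used throughout Section \ref{GEN}, and verifying that the finitely many places excluded really are only the bad ones in $S$ together with the (finitely many) ramified places, so that ``infinitely many'' survives. The Chebotarev input itself is standard and needs only the hypothesis that $E/\Fq(s)$ is separable, which holds here (indeed it is even linearly disjoint from $\Fqbar$, which guarantees $G$ is the full geometric monodromy group so that every conjugacy class is hit).
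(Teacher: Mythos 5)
Your approach matches the paper's: realize $g$ as $Y^{-1}\gamma(Y)$, use the function-field Chebotarev density theorem to find infinitely many unramified places (outside the finite bad set where $D \notin \GL_n(\ofrak)$) whose Frobenius picks out the right element, and identify the reduction of $Y^{-1}\hat D Y$ with that Frobenius via $\hat D\,\phi_{q^d}(Y)=Y$. The only issue is the sign at the very end, which you correctly anticipated but then resolved in the wrong direction. You computed
\[
\overline Y^{-1}\,\overline{\hat D}\,\overline Y \;=\; \mathrm{Frob}_\Pcal(\overline Y)^{-1}\,\overline Y,
\]
which is \emph{not} the image of $\mathrm{Frob}_\Pcal$ under $\gamma\mapsto Y^{-1}\gamma(Y)$; if $a=\overline Y^{-1}\mathrm{Frob}_\Pcal(\overline Y)\in\GL_n(\Fq)$, then $\mathrm{Frob}_\Pcal(\overline Y)^{-1}\overline Y = a^{-1}$. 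So one must apply Chebotarev to the class of $\gamma_g^{-1}$ rather than $\gamma_g$, i.e., choose $\pfrak$ and $\Pcal$ so that $\gamma_g^{-1}$ is the Frobenius of $\Pcal/\pfrak$; this is exactly how the paper phrases it (``$\gamma^{-1}$ is contained in the decomposition group of $\Pcal/\pfrak$ and acts as the Frobenius''). With that inversion fixed your proof is the paper's proof.
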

\begin{proof}
We can write $g$ as $g=Y^{-1}\gamma(Y)$ for an element $\gamma \in \Gal(E/\Fq(s))$. The proof of Theorem \ref{Matzat} (see \cite[Thm 4.5]{Matzat}) implies that we are looking for unramified finite places $\pfrak$ of $\Fq(s)$ with extensions $\Pcal$ to $E$ such that $D \in \GL_n(\ofrak)$ and such that $\gamma^{-1}$ is contained in the decomposition group of $\Pcal/\pfrak$ and acts as the Frobenius $\phi_{q^d}$ on $\Ocal/\Pcal$ where $d$ denotes the degree of $\pfrak$. The Chebotarev Density Theorem (see \cite[Thm 6.3.1]{FriedJarden}) implies that there exist infinitely many such places.
\end{proof}

\begin{prop}\label{ascent}
Let $M$ be a finite Frobenius module over $(\Fq(s), \phi_q)$ with representing matrix $D \in \GL_n(\Fq(s))$, Picard-Vessiot extension $E/\Fq(s)$ and Galois group $G$. Assume that $E$ and $\Fqbar$ are linearly disjoint over $\Fq$. Then for any $i\geq 1$, the finite Frobenius module $M_i$ over $(\Fqis,\phi_{q^i})$ given by 
\[D_i:=D\phi_q(D)\dots\phi_{q^{i-1}}(D) \] has Picard-Vessiot extension $E\Fqi$, and Galois group $G$. 
\end{prop}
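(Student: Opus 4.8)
The plan is to check that the fixed fundamental solution matrix $Y \in \GL_n(E)$ of $M$ is already a fundamental solution matrix for $M_i$ over $(\Fqis,\phi_{q^i})$, and then to read off the Picard--Vessiot extension and the Galois group directly. First I would use that $\phi_{q^i} = \phi_q^{\,i}$ and that $\phi_q$, being a ring homomorphism, commutes with matrix inversion, so that iterating $D\phi_q(Y) = Y$ gives
\[
\phi_q^{\,i}(Y) = \phi_q^{\,i-1}(D)^{-1}\cdots\phi_q(D)^{-1}D^{-1}\,Y = \bigl(D\phi_q(D)\cdots\phi_q^{\,i-1}(D)\bigr)^{-1}Y = D_i^{-1}Y,
\]
hence $D_i\,\phi_{q^i}(Y) = Y$. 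Since $D_i \in \GL_n(\Fq(s)) \subseteq \GL_n(\Fqis)$, this exhibits $Y$ as a fundamental solution matrix for $M_i$ inside $\GL_n(E\Fqi)$.

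Next I would apply Theorem~\ref{Pap4.2.5} with $F = \Fqis$, $\phi = \phi_{q^i}$ and $L = E\Fqi$. The $\phi_{q^i}$-constants of $\Fqis$ are $\Fqi$, and, since the difference structure on $E \subseteq \Fqsep$ is the ordinary $q$-power Frobenius, the $\phi_{q^i}$-constants of $E\Fqi$ are $\{\,x \in E\Fqi \mid x^{q^i} = x\,\} = \Fqi$ as well; so there are no new constants. Because $E/\Fq(s)$ is a \emph{finite} field extension, the Picard--Vessiot ring of $M$ is $\Fq(s)[Y,Y^{-1}] = \Fq(s)[Y] = E$, and composing with $\Fqi$ (using $\Fqis = \Fqi\cdot\Fq(s)$) gives $\Fqis[Y,Y^{-1}] = E\Fqi$. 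By Theorem~\ref{Pap4.2.5}, $E\Fqi$ is therefore the Picard--Vessiot ring of $M_i$, and, being a field, the Picard--Vessiot extension; it is separable over $\Fqis$, being a base change of $E/\Fq(s)$, so $M_i$ is genuinely a finite Frobenius module in the sense of this subsection.

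Finally, for the Galois group I would invoke the hypothesis that $E$ and $\Fqbar$ are linearly disjoint over $\Fq$: this forces $E \otimes_{\Fq}\Fqi$ to be a field, equivalently $E$ and $\Fqis$ are linearly disjoint over $\Fq(s)$ with $E \cap \Fqis = \Fq(s)$, so restriction gives an isomorphism $\Gal(E\Fqi/\Fqis) \cong \Gal(E/\Fq(s))$. Under this isomorphism an automorphism $\gamma$ of $E\Fqi/\Fqis$ and its restriction $\gamma|_E$ act identically on the entries of $Y \in \GL_n(E)$, so the matrices $Y^{-1}\gamma(Y) = Y^{-1}\gamma|_E(Y)$ attached to them agree. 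Hence the Galois group of $M_i$, realized in $\GL_n(\Fqi)$ via $\gamma \mapsto Y^{-1}\gamma(Y)$, equals the Galois group $G$ of $M$ as a subgroup of $\GL_n$.

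The first-paragraph computation and the identity $\Fqis[Y,Y^{-1}] = E\Fqi$ are routine, and the constant count is automatic once $\phi_q$ is recognized as the ordinary Frobenius; the only step that genuinely consumes a hypothesis is the last one, where linear disjointness of $E$ with $\Fqbar$ over $\Fq$ is exactly what keeps $[E\Fqi:\Fqis]$ equal to $[E:\Fq(s)] = |G|$ and preserves the realization of $G$ inside $\GL_n$. I expect that to be the point deserving the most care.
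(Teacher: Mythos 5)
Your proof is correct and takes essentially the same route as the paper's: verify that the original fundamental matrix $Y$ also solves $D_i\phi_{q^i}(Y)=Y$, read off that the Picard--Vessiot extension of $M_i$ is $E\Fqi$, and use linear disjointness of $E$ with $\Fqbar$ over $\Fq$ to identify $\Gal(E\Fqi/\Fqis)$ with $\Gal(E/\Fq(s))$. You spell out a few steps the paper leaves implicit (the constant count, the equality of the two realizations $Y^{-1}\gamma(Y)$ inside $\GL_n$ via restriction), but there is no difference in approach.
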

\begin{proof}
Let $Y \in \GL_n(E)$ be a fundamental solution matrix for $M$. Hence $D\phi_q(Y)=Y$ which inductively implies $D_i \phi_{q^i}(Y)=Y$, so that $Y$ is a fundamental solution matrix for $M_i$ as well. As $E$ is generated over $\Fq(s)$ by the entries of $Y$, we conclude that $E_i:=E\Fqi$ is generated over $\Fqis$ by the entries of $Y$. Hence $E_i$ is a Picard-Vessiot extension of $M_i$ and as $E$ and $\Fqi$ are linearly disjoint over $\Fq$ by assumption,
we have $\Gal(E_i/\Fqis)=\Gal(E/\Fq(s))=G$. 
\end{proof}

\subsection{Lifting Nori's Theorem}
\begin{lemma} \label{defG}
Let $\G$ be a connected, reductive linear algebraic group defined over $\Fq$ of rank $r$. Assume that there exists a maximal torus $T$ that splits over $\Fq$ with $\Fq$-isomorphism  $\gamma \colon \mathbb{G}_m^r \rightarrow T$. Then there exist irreducible polynomials $p_1,\dots,p_r \in \Fq[t]$ such that if we set $g=\gamma(p_1,\dots,p_r) \in T(\Fq(t))$ the following holds:
\begin{itemize}
 \item $g$ is contained in $T(\Fq[t]_{(t)})$ and $g \equiv I \mod t$.
\item For any $g_0 \in T(\Fq)$, $g_0g$ generates a dense subgroup of $T$. In particular, the centralizer of $g_0g$ inside $\G$ equals $T$.
\end{itemize}
\begin{proof}
Choose pairwise distinct irreducible polynomials $p_1$,$\dots$,$p_r \in \Fq[t]$ with constant terms $1$ and set 
$g:=\gamma(p_1,\dots,p_r).$ Then for any $g_0 \in T(\Fq)$ and every non-trivial character $\chi$ of $T$, we have $\chi(g_0g)\neq1$ and $g_0g$ thus generates a dense subgroup of $T$. 
\end{proof}
\end{lemma}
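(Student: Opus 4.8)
The plan is to exhibit explicit polynomials $p_1,\dots,p_r$ and then check the two bullet points directly; the content is really just the arithmetic of characters of a split torus. First I would choose $p_1,\dots,p_r \in \Fq[t]$ to be pairwise distinct irreducible polynomials each having constant term $1$ (for instance, take irreducibles $q_i(t)$ of suitably large, pairwise distinct degrees and replace $q_i$ by $q_i(t)/q_i(0)$, which is again irreducible and monic up to a unit with value $1$ at $t=0$; since $\Fq[t]$ has infinitely many irreducibles of distinct degrees this is possible). Set $g := \gamma(p_1,\dots,p_r) \in T(\Fq(t))$. The first bullet is immediate: each $p_i$ is a polynomial, hence lies in $\Fq[t]_{(t)}$, and is a unit there because $p_i(0)=1 \neq 0$; thus $g \in T(\Fq[t]_{(t)})$ via the $\Fq$-isomorphism $\gamma \colon \mathbb{G}_m^r \to T$, and reducing mod $t$ sends each $p_i$ to $1$, so $g \equiv \gamma(1,\dots,1) = I \bmod t$.

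For the second bullet, fix $g_0 \in T(\Fq)$ and let $\chi$ be an arbitrary nontrivial character of $T$. Since $T$ splits over $\Fq$, every character is defined over $\Fq$, and pulling back along $\gamma$ we may write $\chi \circ \gamma = (x_1,\dots,x_r) \mapsto \prod_i x_i^{a_i}$ for integers $a_i$, not all zero. Then
\[
\chi(g_0 g) = \chi(g_0)\,\chi(g) = \chi(g_0)\,\prod_{i=1}^r p_i^{a_i} \in \Fq(t)^\times .
\]
The key point is that this element cannot be a root of unity (equivalently, cannot lie in $\overline{\Fq}$), which is what forces $\overline{\langle g_0 g\rangle} = T$. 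Indeed $\chi(g_0)$ lies in $\Fq^\times$, a root of unity, so it suffices to show $\prod_i p_i^{a_i} \notin \overline{\Fq}$ whenever the $a_i$ are not all zero. Since the $p_i$ are pairwise distinct irreducibles in the UFD $\Fq[t]$, the valuation of $\prod_i p_i^{a_i}$ at the prime $p_j$ equals $a_j$; if some $a_j \neq 0$ this product has a genuine zero or pole at $p_j$ and hence is transcendental over $\Fq$, in particular not a root of unity. Therefore $\chi(g_0 g)$ has infinite order in $\Fq(t)^\times$ for every nontrivial $\chi$.

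It remains to conclude that $\overline{\langle g_0 g\rangle} = T$ and that the centralizer of $g_0 g$ in $\G$ is $T$. The closure $S := \overline{\langle g_0 g\rangle}$ is a closed subgroup of the torus $T$, hence a diagonalizable group; if $S \subsetneq T$ then $S$ is contained in the kernel of some nontrivial character $\chi$ of $T$, forcing $\chi(g_0 g)=1$, contradicting the previous paragraph. Hence $S = T$. Finally, the centralizer $C_\G(g_0 g)$ is a closed subgroup containing $T$; since $T$ is a maximal torus and $C_\G(g_0 g) \supseteq \overline{\langle g_0 g\rangle} = T$, and $C_\G(g_0 g)$ centralizes the Zariski-dense subgroup $\langle g_0 g\rangle$ of $T$, we get $C_\G(g_0 g) \subseteq C_\G(T) = T$ (the last equality because $T$ is a maximal torus of a connected $\G$, so it is its own centralizer). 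Thus $C_\G(g_0 g) = T$, as claimed. The only place requiring a little care — and the mild "obstacle" — is arranging the $p_i$ to be simultaneously irreducible, pairwise distinct, and normalized to value $1$ at $t=0$; everything else is a formal consequence of $\Fq[t]$ being a UFD with infinitely many primes and of $T$ being split.
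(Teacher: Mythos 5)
Your proposal is correct and follows essentially the same route as the paper: choose pairwise distinct irreducibles $p_1,\dots,p_r \in \Fq[t]$ with constant term $1$, set $g = \gamma(p_1,\dots,p_r)$, and observe that for any nontrivial character $\chi$ of $T$ the value $\chi(g_0 g) = \chi(g_0)\prod_i p_i^{a_i}$ has a genuine zero or pole at some $p_j$ and so cannot equal $1$, whence $g_0 g$ is not contained in the kernel of any nontrivial character and thus generates a dense subgroup of $T$. You have merely filled in the details the paper leaves implicit (in particular the step from density to $C_\G(g_0 g) = C_\G(T) = T$, which uses connectedness and reductivity of $\G$), and everything you write is accurate.
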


\begin{thm}\label{result}
Let $\G \leq \GL_n$ be a semisimple, simply-connected linear algebraic group defined over $\Fq$. Then for a suitable $i \in \N$  there exists an $n$-dimensional difference module $M$ over $(\Fqist,\phi_{q^i})$ with a separable Picard-Vessiot ring $R/\Fqist$ and corresponding Galois group scheme isomorphic to $\G$ (as linear algebraic group over $\Fqi\hspace{-0.02cm}(t)$). 
\end{thm}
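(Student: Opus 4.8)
The plan is to \emph{lift} the finite Galois realization furnished by Nori's Theorem \ref{Nori} to a genuine $\phi_{q^i}$-difference module over $\Fqist$, and then to trap its Galois group scheme $\Hcal$ between $\G$ from above (Theorem \ref{ubthm}) and $\G$ from below (Theorem \ref{uschr} together with the generation result Theorem \ref{arbgen}). First I would choose $i$ large enough that $\G$ splits over $\Fqi$, say via a maximal torus $T$ with an $\Fqi$-isomorphism $\gamma\colon\mathbb{G}_m^r\to T$, and --- as will emerge below --- large enough that the auxiliary places of $\Fqis$ needed for the lower bound can be taken of degree $1$; by Proposition \ref{converseMatzat} and a Chebotarev argument this is a finite condition on $i$. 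Applying Theorem \ref{Nori} over $\Fqi$ produces a finite Frobenius module $M_{\mathrm{fin}}$ over $(\Fqis,\phi_{q^i})$ with representing matrix $D_{\mathrm{fin}}\in\G(\Fqis)$, Picard--Vessiot extension $E/\Fqis$ linearly disjoint from $\Fqbar$ over $\Fqi$, and Galois group $\G(\Fqi)$. (As $\G$ is semisimple it has no nontrivial characters, so $\det|_\G=1$ and $\G$-integral matrices lie automatically in $\GL_n$ of the relevant ring; I use this freely.)

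Next I would build the deformation. Fix a place $\pfrak$ of $\Fqis$ at which $D_{\mathrm{fin}}$ is integral, with valuation ring $\ofrak$, uniformizer $\pi$, and an attached absolute value $|\cdot|$, and form $k=\Fqis$, $K$, $\Oabs$, $L$ as in Section \ref{notation}. Following Lemma \ref{defG} --- applied over $\Fqi$ and then composed with the substitution $t\mapsto\pi t$, which does not affect its density statement since distinct monic irreducibles stay distinct and irreducible under it --- I would choose $g=\gamma(p_1,\dots,p_r)\in T(\Fqist)$ with $g\equiv I\bmod t$, with $\|g_l\|\le\delta^l$ for some $\delta<1$, and with $g_0g$ generating a Zariski-dense subgroup of $T$ for every $g_0\in T(\Fqi)$. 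Put $D:=D_{\mathrm{fin}}\,g\in\G(\Fqist)$, with associated $n$-dimensional $\phi_{q^i}$-module $(M,\Phi)$; then $\|D_l\|\le\delta^l$ for all $l$. For the upper bound, Theorem \ref{thmexistenceofsol} provides a fundamental matrix $Y\in\GL_n(\Oabs[[t]])\cap\Mn(\Oabs\{t\})\subseteq\GL_n(L)$, so by Theorem \ref{Pap4.2.5} the ring $R:=\Fqist[Y,Y^{-1}]\subseteq L$ is a Picard--Vessiot ring for $M$ with constants $\Fqi(t)$; its separability over $\Fqist$ is to be checked from the explicit shape of $Y$. Theorem \ref{ubthm} then yields a fundamental matrix $Y'\in\G(L\cap\Oabs[[t]])$, and Proposition \ref{closedemb} embeds the Galois group scheme $\Hcal=\underline{Aut}(R/\Fqist)$ into $\G$ as a linear algebraic group over $\Fqi(t)$. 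Since $\Fqbar((t))/\Fqi(t)$ is faithfully flat, it suffices to prove $\Hcal=\G$ after base change to $\Fqbar((t))$, where $\Hcal$ is a closed subgroup of $\G$ as in Theorem \ref{arbgen}.

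The lower bound comes from Theorem \ref{uschr}: for degree-one places $\pfrak'$ of $\Fqis$ outside a finite bad set, $\Hcal(\Fqbar[[t]])$ contains the $\kappa(Y)$-conjugate of $\kappa(D)=\kappa(D_{\mathrm{fin}})\,\kappa(g)$ (one Frobenius factor only, as $\deg\pfrak'=1$). Matzat's Theorem \ref{Matzat} and its converse Proposition \ref{converseMatzat} show that, up to a \emph{fixed} $\G(\Fqi)$-conjugation relating $Y\bmod t$ to the fundamental matrix of $M_{\mathrm{fin}}$, the constant part $\kappa(D_{\mathrm{fin}})$ can be made any prescribed element of $\G(\Fqi)$ as $\pfrak'$ varies; hence every element of $\G(\Fqi)$ occurs as the constant part of an element of $\Hcal(\Fqbar[[t]])$, which is one hypothesis of Theorem \ref{arbgen}. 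Choosing $\pfrak'$ so that $\kappa(D_{\mathrm{fin}})$ is regular semisimple and lies in a conjugate of $T$, the density property of $g_0g$ forces the corresponding element of $\Hcal$ to generate a conjugate $T^A$ of $T$; regularity of its constant part makes the centralizer hypothesis of Proposition \ref{descendconj} hold, so $A$ may be taken in $\G(\Fqi+t\Fqbar[[t]])$. Theorem \ref{arbgen} now gives $\Hcal=\G$ over $\Fqbar((t))$, hence over $\Fqi(t)$, and $\dim R=\dim\G$ by Corollary \ref{torsorcor}.

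The main obstacle is the \emph{joint} compatibility of the three criteria for a single $i$: the deformation $g$ must be $\pfrak$-adically small enough for Theorems \ref{thmexistenceofsol} and \ref{ubthm}, while its reductions at the degree-one places $\pfrak'$ must still be regular generators of (a conjugate of) $T$ for Theorem \ref{uschr}; and one must arrange that the torus extracted from $\kappa(D)$ is conjugate to the fixed split torus $T$ by a matrix that is $\Fqi$-rational modulo $t$, so that Proposition \ref{descendconj} and Theorem \ref{arbgen} apply. Forcing the required auxiliary places to have degree $1$ is precisely what compels the passage from $\Fq$ to $\Fqi$.
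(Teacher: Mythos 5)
Your overall strategy (lift Nori, squeeze the Galois group scheme between the upper bound $\Hcal\leq\G$ from Theorem~\ref{ubthm} and the lower bound from Theorem~\ref{uschr} plus Theorem~\ref{arbgen}) is the one used in the paper, but the construction of the deformed matrix $D$ has a genuine gap that breaks the torus part of the lower bound. You set $D=D_{\mathrm{fin}}\,g$ with $g=\gamma(p_1,\dots,p_r)\in T(\Fqist)$ a deformation inside the fixed split torus $T$. At the specialization place, writing $\kappa(D_{\mathrm{fin}})=g_0^{\,x}$ for the regular semisimple $g_0\in T(\Fqi)$ and some $x\in\G(\Fqi)$, you obtain $\kappa(D)=g_0^{\,x}\,\kappa(g)$. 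This is a product of an element of $T^{x}$ with an element of $T$; unless $x\in N_\G(T)$ it does not lie in any conjugate of $T$, need not be semisimple, and the Zariski closure of the group it generates has no reason to be a conjugate of $T$. The ``density of $g_0g$'' argument therefore does not apply, and Proposition~\ref{descendconj} cannot be invoked. The paper's fix is precisely to conjugate the deformation: one sets $D=D_0\,\tilde g^{\,x}$, so that the reduction becomes $(g_0g)^x$, which does lie in $T^x$ and generates a dense subgroup of it. Without this twist the torus hypothesis of Theorem~\ref{arbgen} fails.

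There are two secondary issues worth flagging. First, you build the deformation via $t\mapsto\pi t$ with $\pi$ a uniformizer \emph{at the same place} $\pfrak$ that you use for the absolute value in Theorem~\ref{thmexistenceofsol}; but then the reduction of $g$ at $\pfrak$ is $\gamma(p_1(0),\dots,p_r(0))=1$, so you cannot specialize at that place and must use a different $\pfrak'$, yet you never reconcile the place used for $\qfrak$-adic smallness with the place used to extract $g_0^x$. The paper separates these: it fixes a convergence place $\qfrak$ and a distinct specialization place $\pfrak$ of degree one, and crafts $\tilde p_j=1+\sum_l p_{jl}\bigl(f_\qfrak/f_\qfrak(\alpha)\bigr)^l t^l$ so that $\tilde p_j$ is $\qfrak$-adically small yet reduces to $p_j$ at $\pfrak$. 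Second, you restrict the lower-bound places to degree one throughout, but the ``finite part'' of Theorem~\ref{arbgen} (realizing every $\xi\in\G(\Fq)$ as a constant term in $\Hcal(\Fqbar[[t]])$) needs, for each $\xi$, a place supplied by Proposition~\ref{converseMatzat} of \emph{arbitrary} degree $d_\xi$; the paper specializes $D\phi_q(D)\cdots\phi_{q^{d_\xi-1}}(D)$ there, and there is no reason one can arrange all those places to be degree one for a single $i$. Only the single place $\pfrak$ giving rise to $g_0$ needs to be of degree one, and that is what dictates the choice of $i$.
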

\begin{proof}
By replacing $q$ by a power $q^i$ we may assume that there exists a maximal torus $T$ of $\G$ that splits over $\Fq$ and such that $T(\Fq)$ contains a regular element $g_0$. Then the dimension of the centralizer $\mathcal{C}_\G(g_0)$ equals $r$, the rank of $\G$. As $\G$ is semisimple and simply-connected, all centralizers of semisimple elements are connected (see \cite[Thm 3.5.6]{Carter}), hence 
\begin{equation} \label{centrg0}
 \mathcal{C}_\G(g_0)=T.
\end{equation}
 Let $D_0' \in \G(\Fq(s))$ be the representing matrix of the finite Frobenius module with Galois group $\G(\Fq)$ coming from Nori's theorem \ref{Nori}. Thanks to the Lang-isogeny, we can fix a fundamental solution matrix $Y_0 \in \G(\Fqsep)$. We apply Proposition \ref{converseMatzat} to $g=g_0$ and obtain a finite place $\pfrak'$ of $\Fq(s)$ and an extension $\Pcal'$ to the Galois extension $E$ corresponding to our finite Frobenius module. We define $i$ to be the degree of $\pfrak'$. We then fix an extension $\Pcal$ of $\Pcal'$ from $E$ to $E \Fqi$ and set $\pfrak=\Pcal\cap \Fqi(s)$. We further define \[D_0=D_0'\phi_q(D_0')\dots \phi_{q^{i-1}}(D_0') \in \G(\Fq(s)).\] Then by Proposition \ref{ascent}, the finite Frobenius module $M_0$ over $(\Fqis, \phi_{q^i})$ given by $D_0$ has Galois group $\G(\Fq)$ and $Y_0$ is still a fundamental solution matrix. By construction, the reduction of $Y_0^{-1}D_0Y_0$ modulo $\Pcal'$ equals $g_0$ and so does the reduction modulo $\Pcal$. Denote the reduction of $D_0$ modulo $\pfrak$ by $\overline{D}_0 \in \G(\Fqi)$. Then $\overline{D}_0$ is conjugate to $g_0$ over $\G(\Fqbar)$ and in fact over $\G(\Fqi)$ by Equation (\ref{centrg0}) (compare Proposition \ref{descendconj}). So we obtain an element $x \in \G(\Fqi)$ with 
\begin{equation}\label{defx}
\overline{D}_0=g_0^x.
\end{equation} 
Fix irreducible elements $p_1,\dots,p_r \in \Fq[t]$ as in Lemma \ref{defG} and set \\ $g=\gamma(p_1,\dots,p_r) \in T(\Fq[t]_{(t)})$ (with $\gamma \colon \mathbb{G}_m^r \tilde \rightarrow T$ defined over $\Fq$). Then $g_0g$ generates a dense subgroup of $T$ and $g\equiv I \mod t$. Fix a finite place $\mathfrak{q}\neq \pfrak$ of $\Fqis$ such that $D_0$ is contained in $\GL_n(\ofrak_\mathfrak{q})$, where $\ofrak_\mathfrak{q}$ denotes the corresponding valuation ring inside $\Fqis$. Let $f_\qfrak \in \Fqi[s]$ be a generator of $\mathfrak{q}$. Recall that $\pfrak$ is of degree $1$ in $\Fqis$, hence there exists an $\alpha \in \Fqi$ such that $\pfrak=(s-\alpha)$. Then $f_\qfrak(\alpha) \in \Fqi^{\times}$, as we assumed $\mathfrak{q}\neq \pfrak$. Let $p_{jl} \in \Fq$ denote the coefficients of $p_j$, i.e.,
\begin{equation*}
p_j=1+\sum_{l=1}^{n_j} p_{jl}t^l \in \Fq[t],
\end{equation*} for all $1 \leq j \leq r$. We set
\begin{equation*}
\tilde p_j=1+\sum_{l=1}^{n_j} p_{jl}\left(\frac{f_\qfrak}{f_\qfrak(\alpha)} \right)^l t^l \in \Fqi[s][t],
\end{equation*} for all $1 \leq j \leq r$. Note that $\tilde p_1,\dots,\tilde p_r$ are invertible inside $\Fqis[t]_{(t)}$, hence we can define
\begin{equation*}
\tilde g:=\gamma(\tilde p_1,\dots,\tilde p_r) \in T(\Fqis[t]_{(t)}).
\end{equation*} We can now define the representing matrix $D \in \G(\Fqist)$ of the desired difference module as
\begin{center}
\framebox{$D=D_0\tilde g^x \in \G(\Fqis[t]_{(t)})$} \end{center} with $D \equiv D_0 \mod t$, as $\tilde g \equiv I \mod t$.
Let $M$ be the corresponding difference module over $(\Fqist,\phi_{q^i})$. \\
\\
We first show that there exists a Picard-Vessiot extension for $M$. Let $|\cdot|$ be the absolute value on $k:=\Fqis$ corresponding to $\mathfrak{q}$ with $|f_\qfrak|=\frac{1}{2}$. We use the corresponding notation (such as $K$, $\Ocal_{|\cdot|}$, $\mfrak$ and $L$) set up in Section \ref{notation}. By construction, the absolute value of the $l$-th coefficient of $\tilde p_i$ is at most $\left(\frac{1}{2}\right)^l$ and the same holds for $\tilde p_i^{-1}$ (written as power series in $t$). Every entry of $\tilde g=\gamma(\tilde p_1,\dots,\tilde p_r)$ is given $\Fq$-polynomially in $\tilde p_1,\dots,\tilde p_r$ and their inverses, hence every entry of the $l$-th coefficient matrix $\tilde g_l$ of $\tilde g$ is bounded by $\left(\frac{1}{2}\right)^l$, as well. We conclude 
\[ || \tilde g_l|| \leq \left(\frac{1}{2}\right)^l   \] for every $l \in \N$. As $x$ is contained in $\G(\Fqi)$, conjugating $\tilde g$ with $x$ is given $\Fqi$-linearly in the entries of $\tilde g$ and thus doesn't affect this convergence. Finally, we assumed $D_0 \in \GL_n(\ofrak_\mathfrak{q})$, hence $||D_0|| =1$ and we conclude
\[ ||D_l||=||D_0\tilde g_l^x|| \leq \left(\frac{1}{2}\right)^l \] for all $l \in \N$ (where $D_l \in \Mn(\Fqis)$ denotes the $l$-th coefficient matrix of $D$). We can now apply Theorem \ref{thmexistenceofsol} (with $\delta=\frac{1}{2}$) and obtain a fundamental solution matrix $Y \in \GL_n(\Ocal_{|\cdot|}[[t]])\cap \Mn(\Ocal_{|\cdot|}\{t\})$. We apply Theorem \ref{ubthm} (note that $\Ocal_{|\cdot|}/\mfrak\cong \Fqbar$ embeds into $K$) and obtain another fundamental solution matrix $Y'$ that is contained in $\G(L\cap \Ocal_{|\cdot|}[[t]])$. Then the constant part $Y_0'$ of this new fundamental solution matrix is contained in $\G(K)$ and it is a fundamental solution matrix for $D_0$. After multiplying $Y'$ from the right with $ Y_0'^{-1}Y_0 \in \G(\Fq)$, we may thus assume that the constant part of $Y'$ equals our previously chosen $Y_0$. From now on, we simply denote $Y'$ by $Y$. Then $R:=\Fqist[Y,Y^{-1}]\subseteq L$ is a Picard-Vessiot ring for $M$ by Theorem \ref{Pap4.2.5}. All entries of $Y$ are actually contained in $\Fqsept$ (and not just in $K((t))$ - compare the proof of Proposition \ref{Spezialisierungsprop}). Now $\Fqsept$ is separable over $\Fqi(s)(t)$, hence $R/\Fqist$ is separable. We conclude that the Galois group scheme $\Hcal$ of $M$ is a linear algebraic group (see Theorem \ref{representability}) defined over $\Fqi\hspace{-0.02cm}(t)$ and it is a closed subgroup of $\G$ by Proposition \ref{closedemb}. \\
\\
We will now use the lower bound criterion \ref{uschr} to show that $\Hcal=\G$. By Theorem \ref{arbgen}, it suffices to show that every element inside $\G(\Fq)$ occurs as a constant term inside $\Hcal(\Fqbar[[t]])$ and that $\Hcal$ contains a $\G(\Fq+t\Fqbar[[t]])$-conjugate of the $\Fq$-split torus $T$. The key point is to show that this is really a $\G(\Fq+t\Fqbar[[t]])$-conjugate and not just a $\G(\Fqi+t\Fqbar[[t]])$-conjugate.\\
\\
First of all, note that for any finite place $\qfrak'$ of $\Fqis$ with valuation ring $\ofrak' \subseteq \Fqi(s)$, the polynomials $\tilde p_1,\dots,\tilde p_r$ are contained in $(\ofrak'[t]_{(t)})^{\times}$. Hence \\ $\tilde g= \gamma(\tilde p_1,\dots,\tilde p_r)$ and also $\tilde g^x$ are contained in $\G(\ofrak'[[t]])$. We conclude that $D$ is contained in $\G(\ofrak'[[t]])$ if and only if $D_0$ is contained in $\G(\ofrak')$. \\
\\
Consider $\qfrak'=\pfrak$ with corresponding valuation ring $\ofrak$. Then $D_0$ is contained in $\G(\ofrak)$ by the choice of $\pfrak$. Let $\Ocal$ be the (non-discrete) valuation ring inside $\Fqsep$ corresponding to a fixed extension $\tilde\Pcal$ of $\pfrak$ and let $\kappa \colon \Ocal[[t]] \rightarrow \Fqbar[[t]]$ denote the coefficient-wise reduction modulo $\tilde \Pcal$. By Theorem \ref{uschr} (with $k=\Fqis$), $\Hcal(\Fqi[[t]])$ contains $h:=\kappa(Y^{-1}DY)$ (since $d=1$, as $\ofrak/\pfrak \cong \Fqi$). We use $\kappa(s)=\alpha$, hence $\kappa(\tilde p_j)=p_j$ for all $j$ to compute
\begin{eqnarray*}
\kappa(D)&=&\kappa(D_0)\kappa(\tilde g)^x \\
&=&\overline{D_0}\gamma(\kappa(\tilde p_1),\dots,\kappa(\tilde p_r))^x \\
&=& g_0^x\gamma(p_1,\dots,p_r)^x \\
&=&(g_0g)^x,
\end{eqnarray*} where we also used Equation (\ref{defx}).
Therefore, $h$ is conjugate to $g_0g$ via $x\cdot\kappa(Y)^{-1} \in \G(\Fqbar[[t]])$. On the other hand, the constant term of $h$ equals the reduction of $Y_0^{-1}D_0Y_0$ at $\Pcal$, which equals $g_0$ by construction. Hence $h$ is contained in $\G(\Fq+t\Fqi[[t]])$ and is thus conjugate to $g_0g \in \G(\Fq[[t]])$ not only over $\G(\Fqbar[[t]])$ but also over $\G(\Fq+t\Fqbar[[t]])$, by Proposition \ref{descendconj}. Let $A \in \G(\Fq+t\Fqbar[[t]])$ be such that $(g_0g)^A=h$. Recall that $g_0g$ generates a dense subgroup of $T$. Hence $h=(g_0g)^A$ generates a dense subgroup of $T^A$, and $\Hcal$ thus contains $T^A$. \\
\\
For the finite part, let $\xi \in \G(\Fq)$ be arbitrary and fix one of the finite places $\pfrak_\xi$ of $\Fqis$ with extension $\Pcal_\xi$ provided by Proposition \ref{converseMatzat} applied to the finite Frobenius module $M_0$ over $(\Fqis, \phi_{q^i})$. Let $\ofrak_\xi$ denote the corresponding valuation ring inside $\Fqi(s)$ and $d_\xi$ the degree of $\pfrak_\xi$.  Then $D_0 \in \G(\ofrak_\xi)$ and thus $D \in \G(\ofrak_\xi[[t]])$. Let further $\tilde\Pcal_\xi$ be an extension of $\Pcal_\xi$ to $\Fqsep$. Then by Theorem \ref{uschr}, $\Hcal(\Fqi[[t]])$ contains 
\[\kappa_\xi(Y^{-1}D\phi_q(D)\dots\phi_{q^{d_\xi-1}}(D)Y), \] where $\kappa_\xi$ denotes the coefficient-wise reduction modulo $\tilde \Pcal_\xi$. Looking at constant parts, we deduce that the reduction of \[Y_0^{-1}D_0\phi_q(D_0)\dots\phi_{q^{d_\xi-1}}(D_0)Y_0 \mod \Pcal_\xi\] occurs as a constant term in $\Hcal(\Fqi[[t]])$. By construction, this reduction equals $\xi$. Hence every element in $\G(\Fq)$ occurs as a constant term inside $\Hcal(\Fqi[[t]])$ which concludes the proof.
\end{proof}

\subsection{Example}
Let now $\G=\SL_n$, assume $q> n(n+1)/2$, and let $T$ be the ($\Fq$-split) diagonal torus inside $\SL_n$. If $\zeta \in \Fq$ is a $(q-1)$-th primitive root of unity, then $T(\Fq)$ contains the regular element 
\[g_0:=\diag(\zeta,\zeta^2,\dots,\zeta^{n-1},\zeta^{-\frac{n(n-1)}{2}}). \] It was shown in $\cite{AM}$ that there exists $f_i \in \Fq[s]$ of the form $f_i=s\alpha_i+(1-s)\beta_i$ for some $\alpha_i,\beta_i \in \Fq$ such that the finite Frobenius module over $(\Fq(s),\phi_q)$ given by 
\[
D_0=\begin{pmatrix}
f_1 & \dots & f_{n-1} & (-1)^{n-1} \\
1&&&\\
&\ddots&&\\
&&1&0 
\end{pmatrix}
 \] has Galois group $\SL_n(\Fq)$. Let $\gamma_1,\dots,\gamma_{n-1}$ be the coefficients of the characteristic polynomial of $g_0$. Fix an element $\alpha \in \Fq\backslash\{0,1 \}$. Then it is easy to see that if we alter $f_i$ to 
\[f_i=s\alpha_i+(1-s)\beta_i+\frac{s(s-1)}{\alpha(\alpha-1)}(\gamma_i-\alpha\alpha_i-(1-\alpha)\beta_i), \] the corresponding Frobenius module $M_0$ over $(\Fq(s), \phi_q)$ has the same Galois group. For this new Frobenius module, there exists a place $\pfrak$ of degree $1$ of $\Fq(s)$, namely $\pfrak=(s-\alpha)$, such that the specialization of $D_0$ at $\pfrak$ is conjugate to $g_0$ over $\G(\Fq)$. Hence the number $i$ in Theorem \ref{result} can be chosen as $i=1$. The elements $p_j$ in Lemma \ref{defG} can be chosen as $p_j=(1+\zeta^jt)$ for $1\leq j \leq n-1$. Following the proof of Theorem \ref{result}, we obtain that the difference module $M$ over $(\Fq(s,t),\phi_q)$ given by 
\begin{center}
\framebox{$D=D_0\cdot \diag(\tilde p_1,\dots,\tilde p_{n-1},(\tilde p_1\cdots \tilde p_{n-1})^{-1})^x $} \end{center} 
has Galois group $\SL_n$ where the elements $\tilde p_j \in \Fq[s,t]$ and $x \in \G(\Fq)$ can also be chosen explicitly: We fix the finite place $\qfrak=(s)$, hence $f_\qfrak=s$ and we can define $\tilde p_j$ as 
\[ \tilde p_j:=1+\zeta^j\frac{s}{\alpha} t \] for $1 \leq j \leq n-1$. Finally, $x \in \SL_n(\Fq)$ is a matrix such that the reduction of $\overline{D_0}$ of $D_0$ at $\pfrak=(s-\alpha)$ equals $g_0^x$. We have 
\[\overline{D}_0 =
\begin{pmatrix}
\gamma_1 & \dots & \gamma_{n-1} & (-1)^{n-1} \\
1&&&\\
&\ddots&&\\
&&1&0 
\end{pmatrix}
 \]   and it is easy to see that $x$ can be chosen as \[x=
\begin{pmatrix} \det(A)^{-1} &&& \\ &1&& \\ &&\ddots& \\ &&&1 
\end{pmatrix}\cdot A\] with $A$ the Vandermonde-matrix corresponding to $(\zeta^{-1},\zeta^{-2},\dots,\zeta^{-n+1},\zeta^{\frac{n(n-1)}{2}})$.

\section{Pre-$t$-Motives}\label{pretm}
In this section, we lift our result from $k(t)$ to $\kt$ to get pre-$t$-motives with semisimple simply-connected Galois groups. Pre-$t$-motives are defined in Definition \ref{pretmotive}. For more information on the theory of $t$-motives, we refer the reader to \cite{Papanikolas} and \cite{Taelman} as well as to the survey articles \cite{BPap} and \cite{Chang}.\\
We first specify our notation as follows.
\begin{description}
\item[$k$:] $k=\Fq(\theta)$, a rational function field.
\item[$|\cdot|_{\infty}$:] the $\infty$-adic valuation on $k$ with $|\theta|_{\infty}=q$.
\item[$K$, $L$:] are as defined in Section \ref{notation} (with respect to $|\cdot|_\infty$).
\item[$\sigma$:] on $\overline{k}$ and $K$, $\sigma$ is the inverse of the Frobenius and $\sigma$ extends to $\overline{k}(t)$, $K\{t\}$ and $L$  by acting coefficient-wise, i.e., $\sigma(t)=t$. Note that $L^{\sigma}=k^{\sigma}=\Fq(t)$ holds.
 \end{description}

\begin{Def} 
A \emph{pre-$t$-motive} is a left $\overline{k}(t)[\sigma,\sigma^{-1}]$-module that is finite dimensional over $\overline{k}(t)$. In other words, a pre-$t$-motive is a difference module $(P,\sigmab)$ over $(\overline{k}(t),\sigma)$ as defined in Definition \ref{defdiffmod}. The notion ``pre-$t$-motive'' depends on $q$, since $\sigma=\phi_q^{-1}$. When considering pre-$t$-motives with respect to different $q$'s at the same time, we will clarify this by calling a pre-$t$-motive corresponding to $\sigma=\phi_q^{-1}$ a \emph{pre-$q$-$t$-motive}. If $q$ has been fixed, a pre-$q^i$-$t$-motive is sometimes called a \emph{pre-$t$-motive of level $i$}. \end{Def}

\begin{Def} \label{pretmotive}
A pre-$t$-motive $(P,\sigmab)$ is called \emph{rigid analytically trivial}, if $P\otimes_{\kbar(t)} L$ has a $\sigmab$-invariant $L$-basis. In other words, $P$ is rigid analytically trivial if and only if there exists a Picard-Vessiot ring of $P$ contained in $L$. 
\end{Def}

The category of rigid analytically trivial pre-$t$-motives is a neutral Tannakian category over $\Fq(t)$ with fiber functor mapping a pre-$t$-motive $(P, \sigmab)$ to the vector space of solutions inside $P\otimes_F L$, i.e. the elements $\alpha \in P\otimes_F L$ with $\sigmab\otimes \sigma(\alpha)=\alpha$ (\cite[3.3.15]{Papanikolas} ). 

\begin{thm}
Let $\G \leq \GL_n$ be a semisimple and simply-connected linear algebraic group defined over $\Fq$. Then there exists an $i \in \N$ and a pre-$q^i$-$t$-motive that is rigid analytically trivial and has Galois group isomorphic to $\G$ as linear algebraic group over $\Fqi\hspace{-0.02cm}(t)$. 
\end{thm}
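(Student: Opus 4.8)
The plan is to deduce this theorem directly from the main result, Theorem \ref{result}, by transporting the difference module constructed there from the base field $(\Fqist, \phi_{q^i})$ to $(\Fqi(s)(t), \sigma)$ where $\sigma = \phi_{q^i}^{-1}$, and then observing that (after renaming the variable $s$ to $\theta$) this is exactly a pre-$q^i$-$t$-motive in the sense of the definitions in this section. First I would take the $n$-dimensional difference module $M$ over $(\Fqist, \phi_{q^i})$ with separable Picard-Vessiot ring $R/\Fqist$ and Galois group scheme $\G$ provided by Theorem \ref{result}, and let $D \in \G(\Fqis[t]_{(t)})$ be its representing matrix. The key point is that, although a pre-$t$-motive is a $\sigma$-module while $M$ is a $\phi_{q^i}$-module, the two notions are interchangeable on a field where $\phi_{q^i}$ is an automorphism: one passes from the $\phi_{q^i}$-module with matrix $D$ to the $\sigma$-module with matrix $\phi_{q^i}(D)^{-1}$ (equivalently $\sigma(D^{-1})$), since $D\phi_{q^i}(Y)=Y$ is equivalent to $\sigma(D^{-1})\,\sigma(Y)^{-1}\cdots$, i.e. $Y^{-1}$ is a fundamental matrix for the $\sigma$-equation. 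The fundamental matrix $Y$ lies in $\GL_n(L)$ by the construction in the proof of Theorem \ref{result}, so the same $L$ serves as the ambient difference ring for the $\sigma$-structure.

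Next I would verify that the base field really is of the required shape. In Section \ref{pretm} one sets $k = \Fq(\theta)$ with the $\infty$-adic valuation and $\sigma = \phi_q^{-1}$; for a pre-$q^i$-$t$-motive the relevant scalar field is $\Fqi(t)$ and the base is $\kbar(t)$, or more precisely we may work over $\Fqi(\theta)(t)$ before extending scalars. Identifying $\theta$ with $s$, the module $M$ over $\Fqi(s)(t)$ becomes a left $\Fqi(\theta)(t)[\sigma,\sigma^{-1}]$-module finite dimensional over $\Fqi(\theta)(t)$; extending to $\kbar(t) = \overline{\Fqi(\theta)}(t)$ we obtain an honest pre-$q^i$-$t$-motive $(P,\sigmab)$. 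To see that $P$ is rigid analytically trivial in the sense of Definition \ref{pretmotive}, I would use that $Y \in \GL_n(L)$ is a fundamental matrix, so $Y^{-1}$ (the fundamental matrix for the $\sigma$-equation) gives a $\sigmab$-invariant $L$-basis of $P \otimes_{\kbar(t)} L$; equivalently the Picard-Vessiot ring $\kbar(t)[Y,Y^{-1}]$ sits inside $L$. One must check that no new constants are introduced: $C_L = \Fq(t)$ and more generally the constants of $L$ over $\Fqi(t)$ are $\Fqi(t)$, which is exactly the scalar field, so this is fine.

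Finally I would argue that the Galois group is preserved. Passing from $M$ over $\Fqist$ to $M$ over $\Fqi(s)(t)$ is a base change along the algebraic (indeed separable) extension $\Fqi(s)(t) \hookrightarrow \Fqi(s)(t)$ — trivial here — but the genuinely new ingredient is base change to $\kbar(t)$: since $\G$ is connected (semisimple), Theorem \ref{basechangethm} applies and shows that $R \otimes_{\Fqi(s)(t)} \kbar(t)$ (inside the common overfield $L$, which has no new constants) is again a Picard-Vessiot ring with the same Galois group scheme $\G$. Replacing $\phi_{q^i}$ by $\sigma=\phi_{q^i}^{-1}$ does not change the automorphism group, since a difference homomorphism commuting with $\phi_{q^i}$ commutes with its inverse; hence the Galois group of the pre-$t$-motive $(P,\sigmab)$ is still $\G$ as a linear algebraic group over $\Fqi(t)$. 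The main obstacle is bookkeeping rather than conceptual: one has to be careful that the conversion between $\phi_{q^i}$-modules and $\sigma$-modules, the renaming of $s$ to $\theta$, and the two base changes (to $\kbar$ and the implicit comparison of valuations $|\cdot|_\infty$ versus the place used in Theorem \ref{result}) are all compatible, and in particular that the Picard-Vessiot ring stays inside $L$ throughout so that rigid analytic triviality is retained. Once these compatibilities are in place, the theorem follows immediately.
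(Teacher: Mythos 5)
Your overall strategy matches the paper's: start from the module $M$ of Theorem \ref{result}, reinterpret it over $\kbar(t)$ via Theorem \ref{basechangethm}, and read off a pre-$q^i$-$t$-motive. However, there are two concrete problems.

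First, and most importantly, ``identifying $\theta$ with $s$'' does not make the valuations compatible, and this is not just bookkeeping — it is the heart of the matter. In the proof of Theorem \ref{result} the Picard--Vessiot ring was constructed inside the field $L_\qfrak$ built from a \emph{finite} place $\qfrak$ of $\Fqi(s)$, whereas rigid analytic triviality (Definition \ref{pretmotive}) asks for the trivialization to lie in the $L$ of Section \ref{pretm}, which is built from the $\infty$-adic place of $k=\Fq(\theta)$. If $\theta = s$, these are different places and different fields $L$, and there is no reason for the Picard--Vessiot ring to sit inside the $\infty$-adic $L$. The paper resolves this by first arranging (after possibly enlarging $i$) that $\qfrak$ has degree $1$, say $\qfrak=(s-\alpha)$, and then renaming $\theta = \frac{1}{s-\alpha}$; this is an automorphism of the rational function field that carries the finite place $\qfrak$ to the place at infinity of $\Fq(\theta)$, making the two fields $L$ literally coincide. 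You flagged the valuation comparison as an obstacle, but you must actually perform this change of coordinate to close the gap.

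Second, your translation between $\phi_{q^i}$-modules and $\sigma$-modules contains errors: from $D\phi(Y)=Y$ one gets $\sigma(D^{-1})\sigma(Y)=Y$, so the $\sigma$-module has matrix $\sigma(D^{-1})$ (not $\phi_{q^i}(D)^{-1}$, which is a different matrix unless $D$ has constant entries) and $Y$ itself (not $Y^{-1}$) is the fundamental matrix. The paper sidesteps this entirely by keeping the \emph{same} matrix $\Phi$ for the pre-$t$-motive and taking $\Psi := \phi_{q^i}(Y)$ as the rigid analytic trivialization, which satisfies $\sigma(\Psi)=\Phi\Psi$ directly and manifestly generates the same ring $\kbar(t)[Y,Y^{-1}]=\kbar(t)[\Psi,\Psi^{-1}]$, so the identification of Galois groups is immediate. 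Your route can be repaired, but the paper's is cleaner and avoids the sign/inverse pitfalls you ran into.
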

\begin{proof} We proved in Theorem \ref{result} that there exists an $i \in \N$ and a difference module $M$ over $(\Fqi(s,t), \phi_{q^i}$) with Galois group $\G$. Recall that we constructed the Picard-Vessiot ring inside $L_\qfrak$, where $\qfrak$ denotes a finite place inside $\Fq(s)$ and $L_\qfrak$ denotes the fraction field of $K_\qfrak\{t\}$ with $K_\qfrak$ the completion of an algebraic closure of the completion of $k$ with respect to an absolute value coming from $\qfrak$. We may assume without loss of generality that $\qfrak$ has degree $1$ (by choosing a larger $i$ in the proof of $\ref{result}$), hence $\qfrak$ is of the form $(s-\alpha)$. We can now rename $\theta=\frac{1}{s-\alpha}$, i.e. we replace every occurrence of $s$ in the representing matrix $D \in \GL_n(\Fqis(t))$ of $M$ with $(\theta^{-1}+\alpha)$ and obtain a matrix $\Phi \in \GL_n(\Fqi(\theta)(t))$. Hence we have found a difference module $M$ over $(\Fqi(\theta)(t), \phi_{q^i})$ with Galois group $\G$, fundamental solution matrix $Y \in \GL_n(L)$ and Picard-Vessiot ring $R:=k(t)[Y,Y^{-1}] \subseteq L$. Hence $R$ and $\kt$ are both contained in $L$ and the Galois group is connected, so we can apply Theorem \ref{basechangethm} to conclude that $M\otimes_k \overline{k}$ has Picard-Vessiot ring $R \otimes_k \overline{k}=\kt[Y,Y^{-1}]\subseteq L$ over $\kt$ and also Galois group $\G$.\\
Let $P$ be the pre-$q^i$-$t$-motive given by $\Phi \in \GL_n(\overline{k}(t))$ and set $\Psi=\phi_q(Y) \in \GL_n(L)$. As $Y$ is a fundamental solution matrix for $M$, we have \[\Phi \phi_q(Y)=Y \] which translates to \[\Phi \Psi=\sigma(\Psi). \] Hence $\Psi$ is a rigid analytic trivialization of $P$ and \[R\otimes_k \overline{k}=\kt[Y,Y^{-1}]=\kt[\Psi,\Psi^{-1}]\] is also a Picard-Vessiot ring for $P$. Hence the Galois group schemes of $P$ and $M\otimes_k \overline k$ coincide (they both equal $\underline{\Aut}(R\otimes_k \kbar/\kt)$).  
\end{proof}

 \bibliographystyle{alpha}	
  \bibliography{references}

\noindent \emph{Lehrstuhl f\"ur Mathematik (Algebra), RWTH Aachen University,\\
   52062 Aachen, Germany. email: annette.maier@matha.rwth-aachen.de}

\end{document}